\DeclareMathOperator{\supp}{supp}                           % support
\DeclareMathOperator{\Lip}{Lip}                             % Lipschitz functions
\newcommand{\N}{\mathbb{N}}             % natural numbers
\newcommand{\R}{\mathbb{R}}             % real numbers
\newcommand{\J}{\mathcal{J}}            % James space
\newcommand{\car}{\mathbbm{1}}   % Characteristic function
\newcommand{\ep}{\varepsilon} 
\newcommand{\abs}[1]{\lvert{#1}\rvert}                     % absolute value
\newcommand{\set}[1]{\left\{{#1}\right\}}                   % set by extension
\newcommand{\norm}[1]{\lVert{#1}\rVert}                  % norm
\newcommand{\F}{\mathcal{F}}                             % Lipschitz-free space
\def\<{\langle}
\def\>{\rangle}
\newcommand{\eqnorm}[1]{{\left\vert\kern-0.25ex\left\vert\kern-0.25ex\left\vert #1 
    \right\vert\kern-0.25ex\right\vert\kern-0.25ex\right\vert}}
\newcommand\eqnorma{\eqnorm{\cdot}}
\def\Sz{\operatorname{Sz}}
\def\d{\overline{\delta}}
\def\p{\overline{\rho}}
\newtheorem{theorem}{Theorem}[section]
\newtheorem{lemma}[theorem]{Lemma}
\newtheorem{proposition}[theorem]{Proposition}
\newtheorem{corollary}[theorem]{Corollary}
\newtheorem*{T1}{Theorem~\ref{TheoremWeakStarMP}} 
\newtheorem*{T2}{Theorem~\ref{TheoremCompactWMP}} 
\newtheorem*{T3}{Theorem~\ref{thm:renorming}} 
\theoremstyle{definition}
\newtheorem{definition}[theorem]{Definition}
\newtheorem{example}[theorem]{Example}
\newtheorem{question}[theorem]{Question}
\theoremstyle{remark}
\newtheorem{remark}[theorem]{Remark}
\begin{document}
	
	\title[On the Weak Maximizing Properties]{On the Weak Maximizing Properties}
	
	\author[L. Garc\'ia-Lirola]{Luis C. Garc\'ia-Lirola}
	\address{Departamento de Matem\'aticas, Instituto Universitario de Matemáticas y Aplicaciones, Universidad de Zaragoza, 50009 Zaragoza, Spain}
	\email{luiscarlos@unizar.es}
	
	\author[C. Petitjean]{Colin Petitjean}
	\address{LAMA, Univ Gustave Eiffel, UPEM, Univ Paris Est Creteil, CNRS, F-77447, Marne-la-Vall\'ee, France}
	\email{colin.petitjean@univ-eiffel.fr}
	
	\date{November, 2020} 
	
	\begin{abstract}
		Quite recently, a new property related to norm-attaining operators has been introduced: the weak maximizing property (WMP). In this note, we define a generalised version of it considering other topologies than the weak one (mainly the weak$^*$ topology). We
		provide new sufficient conditions, based on the moduli of asymptotic uniform smoothness and convexity, which imply that a pair $(X,Y)$ enjoys a certain maximizing property. This approach not only allows us to (re)obtain as a direct consequence that the pair $(\ell_p,\ell_q)$ has the WMP but also provides many more natural examples of pairs having a given maximizing property. 
	\end{abstract}

	% KEYWORDS
	
	\subjclass{Primary 46B20; Secondary 46B04, 54E50}
	%46B20 (Geometry and structure of normed linear spaces)
	%46B04 (Isometric theory of Banach spaces)
	%54E50 (Complete metric spaces)

	\keywords{Norm attainment, Banach Space, Weak Maximizing Property, Asymptotic Uniform Smoothness, Asymptotic Uniform Convexity}

	\maketitle

	\section{Introduction}
	Let $X,Y$ be two real Banach spaces and let $T\colon X \to Y$ be a bounded linear operator (we will write $T \in \mathcal{L}(X,Y)$). A maximizing sequence for $T$ is a sequence $(x_n)_n \subset X$ with $\|x_n\|=1$ for every $n \in \N$ and such that $\lim\limits_{n \to \infty} \|Tx_n\|_Y = \|T\|$. Next, we say as usual that $T\colon X \to Y$ attains its norm whenever there exists a vector $x \in X$ of norm 1 such that $\|Tx\|_Y = \|T\|$. 
	Now following \cite{Aron_WMP19}, a pair of Banach spaces $(X,Y)$ is said to have the weak maximizing property (WMP) if for any bounded linear operator $T\colon X \to Y$, the existence of a non-weakly null maximizing sequence for $T$ implies that $T$ attains its norm.
	For instance, it is proved in \cite[Theorem~1]{Pellegrino_09} that the pair $(\ell_p,\ell_q)$ has the WMP whenever $1<p<\infty$ and $1 \leq q < \infty$. That result was extended in \cite[Proposition 2.2]{Aron_WMP19} to the pair $(\ell_p(\Gamma_1),\ell_q(\Gamma_2))$ where $\Gamma_1,\Gamma_2$ are arbitrary index sets.

	Needless to say that the theory of norm attaining operators finds many applications in both pure and applied mathematics. A motivation for the study of the WMP is the following application which was noted in \cite{Aron_WMP19} (and extends a former result due to J.~Kover in the Hilbert case \cite{Kover2005}): if $(X,Y)$ has the weak maximizing property, $T \in \mathcal{L}(X,Y)$ and $K\colon X \to Y$ is a compact operator such that $\|T\| < \|T+K\|$, then $T+K$ is norm attaining. 
	As a consequence, the authors could deduce that a pair $(X,Y)$ has the weak maximizing property for some $Y \neq \set{0}$ if and only if $X$ is reflexive. 
	
	In this note, we provide a new (possibly simpler) approach based on a comparison between the modulus of asymptotic uniform convexity of $X$ and the modulus of asymptotic uniform smoothness of $Y$, and conversely; see Section~\ref{section_AUSAUC} for precise definitions.  Moreover, we consider the following related and quite natural properties.
	
	\begin{definition} 
		Let $X$ and $Y$ be Banach spaces. Let $\tau_X$ and $\tau_Y$ be any topology on $X$ and $Y$ respectively.
		\begin{itemize}[leftmargin=*]
			\item We say that a pair $(X,Y)$ has the $\tau_X$-to-$\tau_Y$ maximizing property ($\tau_X$-to-$\tau_Y$MP) if for any $T \in  \mathcal{L}(X,Y)$ which is $\tau_X$-to-$\tau_Y$ continuous, the existence of a non $\tau_X$-null maximizing sequence for $T$ implies that $T$ attains its norm. 
		\end{itemize}
		
		In this paper, $\tau_X$ and $\tau_Y$ will mainly be the usual weak or weak$^*$ topologies. Since any $T \in  \mathcal{L}(X,Y)$ is weak-to-weak continuous, it is clear that the WMP corresponds to the weak-to-weakMP. However, there are operators $T \in  \mathcal{L}(X^*,Y^*)$ which are not weak$^*$-to-weak$^*$ continuous, so we will also consider the next property:
		
		\begin{itemize}[leftmargin=*]
			\item We say that a pair $(X^*,Y)$ has the weak$^*$ maximizing property (W$^*$MP) if for any $T \in  \mathcal{L}(X^*,Y)$, the existence of a non-weak$^*$ null maximizing sequence for $T$ implies that $T$ attains its norm. 
		\end{itemize}
		Notice that if the pair $(X^*,Y)$ has the WMP then it also has the W$^*$MP, and consequently the weak$^*$-to-weakMP. Also, if $X$ is reflexive, then a pair $(X,Y)$ has the WMP if and only if $(X,Y)$ has the W$^*$MP if and only if $(X,Y)$ has the weak$^*$-to-weakMP. However, we will prove that these three properties do not coincide in general.  
	\end{definition}

	\begin{remark}
		The definition of the above properties can be stated in terms of nets instead of sequences. Indeed, given an operator $T\colon X\to Y$, the following properties are equivalent:
		\begin{itemize}
			\item[i)] There exists a non $\tau_X$-null maximizing sequence for $T$. 
			\item[ii)] There exists a non $\tau_X$-null maximizing net for $T$.
			\item[iii)] There exists a net which is maximizing for $T$ and does not admit $0$ as a $\tau_X$-cluster point.
		\end{itemize}
		Clearly, i)$\Rightarrow$ii). 
		To see that iii)$\Rightarrow$ i), let  $(x_\alpha)_\alpha\subset S_{X}$ be a net such that $0$ is not a $\tau_X$-cluster point and $\lim_\alpha\norm{Tx_\alpha}=\norm{T}$. Pick inductively $\alpha_n$ such that $\norm{Tx_{\alpha_n}}\geq 1-\frac{1}{n}$ and $\alpha_n\geq \alpha_m$ if $n\geq m$. Then the sequence $(x_{\alpha_n})_n$ is not $\tau_X$-null, and it is maximizing for $T$. 
		Finally, assume that ii) holds and let's prove iii). Let $(x_\alpha)_\alpha$ be a non $\tau_X$-null net maximizing for $T$. If $0$ is the only $\tau_X$-cluster point of $(x_\alpha)_\alpha$, then all the subnets of $(x_\alpha)_\alpha$ converge to $0$, which is a contradiction (see e.g. \cite{AliprantisBorder}). 
		Thus, $0$ is not the only $\tau_X$-cluster point of $(x_\alpha)_\alpha$, that is, there is a subnet $(y_\beta)_\beta$ convergent to $y\neq 0$, and $(y_\beta)_\beta$ is also maximizing for \nolinebreak$T$.  
	\end{remark}
	\begin{remark}
		Replacing the WMP by the weak$^*$-to-weak$^*$MP, one can follow the lines of \cite[Proposition 2.4]{Aron_WMP19} to obtain the following result:\\
		Suppose that $(X^*,Y^*)$ has the weak$^*$-to-weak$^*$MP. Let $T,K \colon X^* \to Y^*$ be weak$^*$-to-weak$^*$ continuous linear operators such that $K$ is compact. 
		If $\norm{T} < \norm{T+K}$ then $T+K$ is norm attaining.
	\end{remark}
	We now describe the main findings of this paper.  Throughout the paper, $X$ and $Y$ will denote real Banach spaces while $X^*$ denotes, as usual, the topological dual of $X$.  
	After recalling the definition of the modulus $\d_{X^*}^*(t)$ of weak$^*$ asymptotic uniform convexity and of the modulus $\p_Y(t)$ of asymptotic uniform smoothness in Section~\ref{section_AUSAUC}, we prove our first theorem in Section~\ref{section-mainresults}.
	\begin{T1} \label{T1}
		Let $X,Y$ be Banach spaces. Assume that for every $t>0$, $\overline{\delta}_{X^*}^*(t)\geq \p_Y(t)$ and, for all $t\geq 1$,
		$\overline{\delta}_{X^*}^*(t)>t-1$. Then,
		\begin{enumerate}
			\item the pair $(X^*,Y)$ has the weak$^*$-to-weakMP.
			\item If moreover $Y\equiv Z^*$ is a dual space, then the pair $(X^*,Z^*)$ has the weak$^*$-to-weak$^*$MP.
			\item If $\d_{X^*}^*(t)=t$, then $(X^*,Z)$ has the W$^*$MP for any Banach space $Z$.
		\end{enumerate} 
		In particular, if $X$ is reflexive then the pair $(X^*,Y)$ has the WMP.
	\end{T1}
	Since for any infinite sets $\Gamma_1$ and $\Gamma_2$ and for $1<p<q<\infty$, it is well known that  $\d_{\ell_p(\Gamma_1)}^*(t) = (1+t^p)^{1/p} - 1 > (1+t^q)^{1/q} - 1 = \p_{\ell_q(\Gamma_2)}$, our last theorem provides a new proof of the fact that the pair $(\ell_p(\Gamma_1), \ell_q(\Gamma_2))$ has the WMP. Note that the case $p=q$ also follows from the last theorem.
	In both papers \cite{Aron_WMP19,Pellegrino_09}, the proof for the case $1\leq q<p<\infty$
	follows from Pitt's theorem asserting that any bounded operator $T\colon \ell_p \to \ell_q$ is compact and thus attains its norm. In fact, Pitt's theorem can also be generalised using the modulus $\p_X(t)$ of asymptotic uniform smoothness of $X$ and the modulus $\d_Y(t)$ of asymptotic uniform convexity of $Y$. Namely, if there exists $t>0$ such that $\p_X(t) < \d_Y(t)$, then every bounded linear operator from $X$ to $Y$ is compact (see \cite[Proposition~2.3]{JLPS02}, where this is stated only for $0<t<1$, but the
	proof works for any $t >0$). As an easy consequence we deduce the next result. 
	\begin{T2}
		Let $X,Y$ be Banach spaces and $\eqnorma$ be an equivalent norm on $X^*$. If there exists $t>0$ such that $\p_{\eqnorma}(t) < \d_Y(t)$, then
		\begin{enumerate}
			\item the pair $(X^*,Y)$ has the weak$^*$-to-weakMP.
			\item If moreover $Y \equiv Z^*$ is a dual space, then $(X^*,Z^*)$ has the weak$^*$-to-weak$^*$MP.
		\end{enumerate}
		In particular, if $X$ is reflexive then $(X^*,Y)$ has the WMP. 
	\end{T2}

	In Section~\ref{section-renorming}, we take advantage of the renorming theory to enlarge the range of applications of Theorem~\ref{TheoremWeakStarMP}. Notably, we deduce that if $X$ belongs to a special class of Banach spaces, one can find an equivalent norm $\eqnorma$ on $X^*$ as well as some $q \geq 1$ such that the pair $\big((X^*,\eqnorma),\ell_q\big)$ has the weak$^*$-to-weak$^*$MP. In particular, if $X$ is moreover reflexive then $\big((X^*,\eqnorma),\ell_q\big)$ has the WMP. Furthermore, there is a strong relationship between asymptotic moduli and lower/upper estimates for spaces having finite dimensional decompositions (shortened FDDs). This allows us to prove the next theorem.
	\begin{T3}
		Let $X, Y$ be Banach spaces with shrinking FDDs. Assume that the norm of $X^*$ is $p$-AUC* and the norm of $Y$ is $q$-AUS for some $1<p \leq q<\infty$. Then there exist equivalent norms $\eqnorma_X$ on $X$ and $\eqnorma_Y$ on $Y$ such that the pair $\big((X^*,\eqnorma_{X}^*), (Y,\eqnorma_Y)\big)$ has the weak*-to-weakMP.
	\end{T3}
	
	Finally, in Section~\ref{section-applications}, we deal with some classical Banach spaces.
	For instance, it is readily seen that Schur spaces are the best range spaces for the WMP: the pair $(X,Y)$ has the WMP for any reflexive space $X$ and any Schur space $Y$ (Proposition~\ref{prop-Schur-range}). Also, thanks to Theorem~\ref{TheoremWeakStarMP}~(3) and $\d_{\ell_1}
	^*(t) = t$, the space $\ell_1=c_0^*$ is also a very good domain for the W$^*$MP since 
	$(\ell_1,Y)$ has the  W$^*$MP for every Banach space $Y$ (see Corollary~\ref{Cor-l1-domain}). The latter result does not hold for every Schur space as it is shown for instance by Example~\ref{Example1-Schur}. To finish, we discuss the case of Dunford--Pettis spaces, James sequence spaces, Orlicz spaces and we also add some comments about the pair $(L_p([0,1]),L_q([0,1]))$.

	\section{Preliminaries: Asymptotic uniform smoothness and convexity}
	\label{section_AUSAUC}
	
	Consider a real Banach space $(X,\norm{\cdot})$ and let $S_X$ be its unit sphere. 
	The \emph{modulus of asymptotic uniform convexity} of $X$ is given by
	\[ \forall t >0, \quad \d_X(t) = \inf_{x\in S_X} \sup_{\dim(X/Y)<\infty}\inf_{y\in S_Y} \norm{x+ty}-1\,. \]
	The space $(X,\norm{\cdot})$ is said to be \emph{asymptotically uniformly convex} (AUC for short) if $\d_X(t)>0$ for each $t>0$.  When $X$ is a dual space and $Y$ runs through all finite-codimensional weak$^*$-closed subspaces of $X$, the corresponding modulus is denoted by $\d_X^*(t)$. 
	Then the space $X$ is said to be \emph{weak$^*$ asymptotically uniformly convex} (AUC*) if $\d_X^*(t)>0$ for each $t>0$.
	The \emph{modulus of asymptotic uniform smoothness} of $X$ is given by
	\[ \forall t >0, \quad \p_X(t) = \sup_{x\in S_X} \inf_{\dim(X/Y)<\infty}\sup_{y\in S_Y} \norm{x+ty}-1\,. \]
	The space $(X,\norm{\cdot})$ is said to be \emph{asymptotically uniformly smooth} (AUS for short) if $\lim_{t\to 0} t^{-1}\p_X(t) = 0$.
	We refer the reader to~\cite{JLPS02} and the references therein for a detailed study of these properties.
	\medskip
	
	Let $p,q\in [1,\infty)$. We say that  $X$ is  \emph{weak$^*$ $p$-asymptotic uniformly convex} (abbreviated by $p$-AUC$^*$) if there exists $C>0$ so that $\overline{\delta}_X^*(t)\geq Ct^p$ for all $t\in [0,1]$. Similarly, we say that $X$ is $q$-asymptotic uniformly smooth (abbreviated by $q$-AUS) if there exists $C >0$ so that $\p_X(t) \leq C t^q$ for all $t \in [0,1]$.
	Let us highlight that the following is proved in  \cite[Corollary 2.4]{DKLR}.
	
	\begin{proposition}\label{duality} Let $X$ be a Banach space.
		\begin{enumerate}[(i)]
			\item Then $\norm{\cdot}_X$ is AUS if and and only if $\norm{\cdot}_{X^*}$ is AUC$^*$.
			\item If $p\in (1,\infty]$ and $q\in [1,\infty)$ are conjugate exponents, then $\norm{\cdot}_X$ is $p$-AUS if and only if $\norm{\cdot}_{X^*}$ is $q$-AUC$^*$.
		\end{enumerate}
	\end{proposition}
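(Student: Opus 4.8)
The plan is to deduce the statement from Theorem~\ref{TheoremWeakStarMP}(1) by exhibiting equivalent norms whose asymptotic moduli can be computed exactly and compared. Precisely, writing $\eqnorm{\cdot}^*$ for the dual norm $\eqnorma_X^*$ on $X^*$, I aim to arrange that
\begin{equation*}
  \d_{X^*}^*(t)\geq (1+t^p)^{1/p}-1 \qquad\text{and}\qquad \p_Y(t)\leq (1+t^q)^{1/q}-1 \quad\text{for all } t>0 .
\end{equation*}
Granting these, the proof finishes at once: since $1<p\leq q$, monotonicity of the $\ell_r$-norms in $r$ gives $(1+t^p)^{1/p}\geq(1+t^q)^{1/q}$, whence $\d_{X^*}^*(t)\geq \p_Y(t)$ for every $t>0$; and since $p>1$ we have $(1+t^p)^{1/p}>t$, so $\d_{X^*}^*(t)>t-1$ for every $t\geq1$. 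Both hypotheses of Theorem~\ref{TheoremWeakStarMP} then hold, and part~(1) gives the weak*-to-weakMP for the renormed pair.

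Thus everything rests on the two displayed estimates, and this is where the FDDs enter. The key input — and the step I expect to be the genuine obstacle — is the known correspondence, for spaces with an FDD, between power-type asymptotic smoothness/convexity and upper/lower $\ell_r$-block estimates, in the sharp form that, after a further equivalent renorming, the relevant estimate holds with constant $1$. Using it I would renorm as follows. As $Y$ has a shrinking FDD $(F_n)$ and is $q$-AUS, choose an equivalent bimonotone norm $\eqnorm{\cdot}_Y$ for which $(F_n)$ satisfies a $1$-upper $\ell_q$-estimate, i.e. $\eqnorm{\sum_k u_k}_Y\leq\big(\sum_k\eqnorm{u_k}_Y^q\big)^{1/q}$ for every block sequence $(u_k)$. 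For $X$, note first that by Proposition~\ref{duality}(ii) the assumption that $X^*$ is $p$-AUC* says exactly that $X$ is $p'$-AUS, where $p'$ is the conjugate exponent of $p$ and $1<p'<\infty$ because $p>1$. Since $X$ also has a shrinking FDD $(E_n)$, choose an equivalent norm $\eqnorm{\cdot}_X$ for which $(E_n)$ satisfies a $1$-upper $\ell_{p'}$-estimate; passing to the dual norm $\eqnorm{\cdot}^*$, the standard Hölder duality between upper $\ell_{p'}$- and lower $\ell_p$-estimates shows that the dual FDD $(E_n^*)$ — a genuine, bimonotone FDD of $X^*$ precisely because $(E_n)$ is shrinking — satisfies a $1$-lower $\ell_p$-estimate.

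With the constant-$1$ estimates available, both modulus bounds follow by testing against the tails of the FDDs. For $\p_Y$, fix $y\in S_Y$ and use the finite-codimensional subspace $Y_{>n}=\overline{\lspan}\{F_m:m>n\}$; writing $y=P_ny+(I-P_n)y$, for any $u\in S_{Y_{>n}}$ the vectors $P_ny$ and $(I-P_n)y+tu$ are consecutive blocks, so the upper estimate gives $\eqnorm{y+tu}_Y\leq\big(\eqnorm{P_ny}_Y^q+(\eqnorm{(I-P_n)y}_Y+t)^q\big)^{1/q}$. Letting $n\to\infty$, so that $\eqnorm{P_ny}_Y\to1$ and $\eqnorm{(I-P_n)y}_Y\to0$, and taking the infimum over these subspaces yields $\p_Y(t)\leq(1+t^q)^{1/q}-1$. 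For $\d_{X^*}^*$, fix $\phi\in S_{X^*}$ and use the weak*-closed finite-codimensional subspaces $X^*_{>n}=(E_1\oplus\cdots\oplus E_n)^\perp$; writing $\phi=Q_n\phi+(I-Q_n)\phi$ with $Q_n$ the dual projection, for any $\psi\in S_{X^*_{>n}}$ the two consecutive blocks $Q_n\phi$ and $(I-Q_n)\phi+t\psi$ give, via the lower estimate and the reverse triangle inequality, $\eqnorm{\phi+t\psi}^*\geq\big((\eqnorm{Q_n\phi}^*)^p+(t-\eqnorm{(I-Q_n)\phi}^*)^p\big)^{1/p}$. Here $\eqnorm{(I-Q_n)\phi}^*\to0$ and $\eqnorm{Q_n\phi}^*\to1$ as $n\to\infty$, again because $(E_n^*)$ is an FDD of $X^*$ (the shrinking hypothesis); taking the supremum over these subspaces produces $\d_{X^*}^*(t)\geq(1+t^p)^{1/p}-1$, as wanted.

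The single delicate point — the obstacle flagged above — is the reduction to constant $1$: a bare power-type modulus only yields a block estimate with some constant $C>1$, and this $C$ cannot simply be discarded, since it would already contradict $\p_Y(0)=0$. Absorbing it requires one more equivalent, FDD-adapted renorming (of averaging/interval type), and this is where the real work and the appeal to the renorming literature lie; everything after it is the routine comparison of the two explicit moduli and the invocation of Theorem~\ref{TheoremWeakStarMP}.
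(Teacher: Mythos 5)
Your proposal proves the wrong statement. What you have written is, in outline, the paper's proof of Theorem~\ref{thm:renorming} (the renorming result for pairs of spaces with shrinking FDDs), not a proof of Proposition~\ref{duality}. The proposition is a general, \emph{isometric} duality between the modulus of asymptotic uniform smoothness of the given norm on $X$ and the modulus of weak$^*$ asymptotic uniform convexity of the dual norm on $X^*$: it holds for an arbitrary Banach space and involves no second space $Y$, no operator, no maximizing sequence, no FDD, and no renorming. Worse, your argument is circular with respect to its supposed target: midway through you invoke Proposition~\ref{duality}(ii) itself (``the assumption that $X^*$ is $p$-AUC* says exactly that $X$ is $p'$-AUS''), i.e.\ you use the statement you were asked to prove as a lemma.

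Nor can this route be repaired. First, the shrinking-FDD hypothesis you lean on is simply unavailable for a general Banach space. Second, $p$-AUS and $q$-AUC* are properties of a \emph{specific} norm, and your plan passes to equivalent norms $\eqnorm{\cdot}_X$, $\eqnorm{\cdot}_Y$; whatever you establish for those says nothing about $\norm{\cdot}_X$ and $\norm{\cdot}_{X^*}$ (Example~\ref{ex:ausauc}(viii) on the Tsirelson space shows how drastically renorming alters asymptotic moduli). Third, Theorem~\ref{TheoremWeakStarMP} is a norm-attainment statement; there is no mechanism by which it could output a quantitative two-sided comparison of $\p_X$ and $\d^*_{X^*}$ --- the logical dependence in the paper runs in the opposite direction. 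In fact the paper gives no proof at all here: the proposition is quoted from \cite[Corollary~2.4]{DKLR}. A self-contained argument would proceed via the Young--Legendre-type duality between the two moduli: testing $\norm{x^*+tx^*_\alpha}$ for a weak$^*$-null net $(x^*_\alpha)$ against Hahn--Banach norming points chosen in finite-codimensional sections of $B_X$ (and symmetrically for the converse), one obtains conjugate-type inequalities pairing $\p_X(s)$ with $\d^*_{X^*}(t)$; part (i) follows directly, and part (ii) follows because the conjugate of a power $t^p$ is a power $t^q$ with $1/p+1/q=1$. None of this resembles your sketch. (As a separate remark: read as a proof of Theorem~\ref{thm:renorming}, your outline is broadly the paper's Section~4 argument --- Lemma~\ref{lemma:deltaestimate}, Prus's norm $\norm{\cdot}_{(p)}$, Lemma~\ref{lemma:dualestimate} --- and your flagged ``constant~$1$'' obstacle is exactly what those lemmas resolve; but that is not the statement under review.)
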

	
	It is worth mentioning that if $X\equiv Z^*$ and the dual norm $\norm{\cdot}_{Z^*}$ is AUS, then $X$ must be reflexive (see for instance Proposition 2.6 in \cite{CauseyLancien}). The following proposition is elementary.
	
	\begin{proposition}\label{as-sequences} Let $X$ be a Banach space. For any weakly null net $(x_\alpha)_\alpha$ in $X$ 
		and any  $x \in X\setminus\{0\}$ we have:
		\[
		\limsup_{\alpha} \norm{x+x_\alpha} \leq \norm{x}\left(1+\overline{\rho}_X\left(\frac{\limsup_\alpha \norm{x_\alpha}}{\norm{x}}\right)\right).
		\]
		
		For any weak$^*$-null net $(x^*_\alpha)_\alpha \subset X^*$ and for any $x^* \in X^*\setminus \set{0}$ we have
		\[
		\liminf_{\alpha} \norm{x^*+x^*_\alpha} \geq \norm{x^*}\left(1+\overline{\delta}_X^*\left(\frac{\liminf_\alpha \norm{x_\alpha^*}}{\norm{x^*}}\right)\right).
		\]
	\end{proposition}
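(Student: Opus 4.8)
The plan is to prove both inequalities by a single scheme, run once as an upper bound driven by $\p_X$ (the weakly null case) and once, in mirror image, as a lower bound driven by $\d_X^*$ (the weak*-null case). I will describe the first in detail and then indicate the changes needed for the second.

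Before starting I would record two routine facts. First, each modulus is $1$-Lipschitz in $t$, hence continuous: for fixed $x\in S_X$ and fixed $Y$ of the relevant class the map $t\mapsto\sup_{y\in S_Y}\norm{x+ty}$ (resp. $t\mapsto\inf_{y\in S_Y}\norm{x+ty}$) is $1$-Lipschitz because $\abs{\norm{x+ty}-\norm{x+t'y}}\leq\abs{t-t'}$, and passing to the relevant infima and suprema preserves $1$-Lipschitzness. Second, a weakly null net is asymptotically inside every finite-codimensional subspace: if $Y=\bigcap_{i=1}^n\ker f_i$ then $f_i(x_\alpha)\to0$ forces the cosets $x_\alpha+Y$ to $0$ in the finite-dimensional space $X/Y$, i.e. $\dist(x_\alpha,Y)\to0$; the same holds for a weak*-null net and a weak*-closed finite-codimensional $Y=\set{x^*:x^*(v_i)=0,\ i\leq n}$ with $v_i\in X$.

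For the AUS estimate, write $u=x/\norm{x}\in S_X$ and $s=\limsup_\alpha\norm{x_\alpha}/\norm{x}$; the case $s=0$ is immediate since then $\norm{x_\alpha}\to0$. Otherwise fix $\ep,\delta>0$ and put $t=s+\ep$. By definition of $\p_X(t)$ applied at $u$ there is a finite-codimensional $Y$ with $\norm{u+z}\leq 1+\p_X(t)+\delta$ for all $z\in Y$ with $\norm{z}=t$; since $r\mapsto\norm{u+r\hat z}$ is convex and equals $1$ at $0$, interpolation between $0$ and $t$ extends this to all $z\in Y$ with $\norm{z}\leq t$. Using the second fact I write $x_\alpha=y_\alpha+w_\alpha$ with $y_\alpha\in Y$ and $\norm{w_\alpha}\to0$; then $\limsup_\alpha\norm{y_\alpha}/\norm{x}=s<t$, so eventually $\norm{y_\alpha}/\norm{x}\leq t$ and hence $\norm{u+y_\alpha/\norm{x}}\leq 1+\p_X(t)+\delta$. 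Absorbing the vanishing error $\norm{w_\alpha}/\norm{x}$ and taking $\limsup_\alpha$ gives $\limsup_\alpha\norm{x+x_\alpha}\leq\norm{x}\bigl(1+\p_X(s+\ep)+\delta\bigr)$; letting $\delta\to0$ and then $\ep\to0$, with continuity of $\p_X$, yields the claim.

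The lower estimate is the same argument reversed, with $u=x^*/\norm{x^*}$, $s=\liminf_\alpha\norm{x^*_\alpha}/\norm{x^*}$, $t=s-\ep$, the witnessing $Y$ now weak*-closed, and the reverse inequality $\norm{u+z}\geq 1+\d_X^*(t)-\delta$ for $\norm{z}=t$, extended by convexity to $\norm{z}\geq t$. The only genuine obstacle, and the one point where the two proofs diverge, is a sign issue in this last extension: convexity gives only $\norm{u+z}\geq 1+(\norm{z}/t)(\d_X^*(t)-\delta)$, which improves to $1+(\d_X^*(t)-\delta)$ exactly when $\d_X^*(t)-\delta\geq0$. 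I would dispose of this by a case split. If $\d_X^*(s)=0$ the target inequality is merely $\liminf_\alpha\norm{x^*+x^*_\alpha}\geq\norm{x^*}$, which is the weak*-lower semicontinuity of the norm applied to $x^*+x^*_\alpha\wsconv x^*$. If $\d_X^*(s)>0$, then by continuity $\d_X^*(s-\ep)>0$ for $\ep$ small, so one chooses $\delta<\d_X^*(t)$, runs the argument as above, and finishes with $\delta\to0$ and $\ep\to0$.
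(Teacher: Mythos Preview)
Your argument is correct. The paper does not actually supply a proof of this proposition; it simply declares it ``elementary'' and moves on, so there is nothing to compare against. Your write-up is a clean and complete justification: the key ingredients---continuity of the moduli, the fact that a weakly (resp.\ weak$^*$) null net is asymptotically absorbed by any finite-codimensional (resp.\ weak$^*$-closed finite-codimensional) subspace, and the convexity interpolation to pass from $\norm{z}=t$ to $\norm{z}\le t$ (resp.\ $\norm{z}\ge t$)---are exactly what is needed, and your case split on whether $\d_X^*(s)=0$ correctly handles the sign issue in the convexity extension for the lower bound.
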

	
	Assume now that $\varphi\colon [0,\infty) \to [0,\infty)$ is a 1-Lipschitz convex function with  $\lim_{t\to \infty}\varphi(t)/t=1$ and $\varphi(t)\geq t-1$ for all $t\geq 0$. 
	Consider for $(s,t) \in \R^2$, 
	\[
	N_2^\varphi(s,t)=
	\begin{cases}\abs{s}+\abs{s}\varphi(\abs{t}/\abs{s}) & \text{ if }s\neq 0,\\
	\abs{t}& \text{ if }s=0.
	\end{cases}
	\]
	The following is stated  
	in \cite{KaltonTAMS2013} (see Lemma~4.3 and its preparation), we include a proof for completeness.

	\begin{lemma}\label{Orlicz-Kalton} The function $N_2^\varphi$ is an absolute (or lattice) norm on $\R^2$, meaning that $N_2^\varphi(s_1,s_2)\le N_2^\varphi(t_1,t_2)$, whenever $|s_i|\le |t_i|$ for all $i\le 2$.
	\end{lemma}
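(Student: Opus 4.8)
The plan is to check the three norm axioms together with the lattice (monotonicity) inequality, arranging the work so that the triangle inequality is obtained last, as a consequence of the monotonicity and of convexity on the first quadrant. Throughout write $\psi=1+\varphi$, so that $N_2^\varphi(s,t)=\abs{s}\,\psi(\abs{t}/\abs{s})$ for $s\neq0$, and note at the outset that $N_2^\varphi$ depends only on $(\abs{s},\abs{t})$; hence it is even in each coordinate and it suffices to analyse it on the closed first quadrant $\{s,t\ge0\}$. Positive homogeneity is immediate from the definition (treating the cases $s\neq0$ and $s=0$ separately), and definiteness follows from $N_2^\varphi(s,t)\ge\abs{s}\ge0$ together with $N_2^\varphi(0,t)=\abs{t}$, so that $N_2^\varphi(s,t)=0$ forces $s=t=0$.

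Next I would establish the monotonicity statement on the quadrant, i.e. that $N_2^\varphi(\cdot,t)$ and $N_2^\varphi(s,\cdot)$ are nondecreasing for fixed nonnegative arguments; since $N_2^\varphi$ is even in each variable this yields the asserted lattice inequality on all of $\R^2$. Monotonicity in the second variable amounts to $\varphi$ being nondecreasing (which holds for the moduli to which the lemma is applied, since a nonnegative convex function with $\varphi(0)=0$ is nondecreasing). Monotonicity in the first variable is the first genuine computation: writing $N_2^\varphi(s,t)=s+s\varphi(t/s)$ and differentiating in $s$ gives $\partial_s N_2^\varphi = 1+\bigl(\varphi(u)-u\varphi'(u)\bigr)$ with $u=t/s$, where $\varphi(u)-u\varphi'(u)$ is precisely the value at $0$ of the tangent line to $\varphi$ at $u$ (for a non-differentiability point one uses any subgradient and right/left derivatives, $\varphi$ being convex). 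Because $\varphi$ is $1$-Lipschitz, so $\varphi'(u)\le1$, and because $\varphi(u)\ge u-1$, this intercept is at least $(u-1)-u=-1$, whence $\partial_s N_2^\varphi\ge0$; here the two hypotheses ``$\varphi$ $1$-Lipschitz'' and ``$\varphi(t)\ge t-1$'' are used in an essential way.

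For the triangle inequality I would first observe that on $\{s>0\}$ the map $N_2^\varphi(s,t)=s\,\psi(t/s)$ is the perspective of the convex function $\psi=1+\varphi$, hence jointly convex there by the standard convex-analysis fact that the perspective preserves convexity. Continuity up to the edge $s=0$ follows from the squeeze $t-s\le s\varphi(t/s)\le t+s\varphi(0)$ (using $\varphi(u)\ge u-1$ and $1$-Lipschitzness), which gives $N_2^\varphi(s,t)\to\abs{t}=N_2^\varphi(0,t)$ as $s\to0^+$. Thus $N_2^\varphi$ is convex on the closed first quadrant, and being positively homogeneous it is subadditive there. Finally, for arbitrary real vectors I combine this with the monotonicity established above: $N_2^\varphi(s+s',t+t')=N_2^\varphi(\abs{s+s'},\abs{t+t'})\le N_2^\varphi(\abs{s}+\abs{s'},\abs{t}+\abs{t'})\le N_2^\varphi(\abs{s},\abs{t})+N_2^\varphi(\abs{s'},\abs{t'})$, which is exactly the triangle inequality.

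I expect the triangle inequality to be the main obstacle, and within it the clean justification of joint convexity together with the behaviour across the degenerate line $s=0$; the perspective viewpoint is what makes this transparent, reducing it to a standard convexity fact plus one boundary-continuity check. A secondary point worth flagging is that monotonicity in the second variable genuinely uses that $\varphi$ is nondecreasing: for the moduli in our applications this is automatic since $\varphi(0)=0$, and it is this property (rather than merely $\varphi(t)\ge t-1$) that the second-variable step relies upon.
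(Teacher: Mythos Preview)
Your proof is correct and follows essentially the same architecture as the paper's: establish the lattice monotonicity on the first quadrant, obtain subadditivity there from convexity of $\varphi$ together with positive homogeneity, combine the two for the triangle inequality on all of $\R^2$, and handle the boundary $s=0$ by continuity. The differences are stylistic rather than structural. For monotonicity in the first variable you differentiate and bound $1+\varphi(u)-u\varphi'(u)\ge0$ using $\varphi'(u)\le1$ and $\varphi(u)\ge u-1$; the paper instead shows directly that $u\mapsto u^{-1}(1+\varphi(u))$ is nonincreasing via the chain $\frac{\varphi(v)-\varphi(u)}{v-u}\le1\le\frac{1+\varphi(v)}{v}$, which avoids subgradients. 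For subadditivity on $\{s>0\}$ you invoke the perspective-function viewpoint, while the paper writes out the one-line convex combination $\varphi\bigl(\tfrac{s}{s+s'}\tfrac{t}{s}+\tfrac{s'}{s+s'}\tfrac{t'}{s'}\bigr)\le\tfrac{s}{s+s'}\varphi(t/s)+\tfrac{s'}{s+s'}\varphi(t'/s')$; these are of course the same computation.

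One point worth noting: your flagging of the second-variable monotonicity step is well taken. Both your argument and the paper's implicitly use that $\varphi$ is nondecreasing, which is \emph{not} forced by the hypotheses as literally stated (a function like $\varphi(t)=\max\{1-t/2,\,t-1\}$ satisfies all the listed conditions yet decreases near $0$). In the intended application $\varphi=\overline{\rho}_X$ one has $\varphi(0)=0$, and then nonnegativity plus convexity do give monotonicity, so there is no actual gap for the purposes of the paper; but you are right to single this out.
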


	\begin{proof} First, note that if $0<u<v$ then 
		\[ \frac{\varphi(v)-\varphi(u)}{v-u} \leq 1\leq \frac{1+\varphi(v)}{v}\]
		since $\varphi$ is $1$-Lipschitz and $\varphi(v)\geq v-1$. It follows that  $\frac{1}{v}(1+\varphi(v))\leq \frac{1}{u}(1+\varphi(u))$. Thus, 
		\[ N_2^\varphi(s,1)=s(1+\varphi(1/s))\leq s'(1+\varphi(1/s')) = N_2^\varphi(s',1) \quad \text{for all } 0<s<s'.\]
		From that and the positive homogeneity of $N_2^\varphi$, its clear that 
		\begin{equation}\label{eq:increasing}
		N_2^\varphi(s,t)\leq N_2^\varphi(s',t') \quad  \text{for all } |s|\leq |s'| \text{ and } |t|\leq |t'|. 
		\end{equation}
		Now, assume that $0<s,s', t,t'$. The convexity of $\varphi$ yields
		\begin{align*}
		N_2^\varphi(s+s',t+t')&=(s+s')\left(1+\varphi\left(\frac{t+t'}{s+s'}\right)\right) \\
		&= s+s'+ (s+s')\varphi\left(\frac{s}{s+s'}\frac{t}{s}+\frac{s'}{s+s'}\frac{t'}{s'}\right) \\
		&\leq s+s'+s\varphi(t/s)+s'\varphi(t'/s') = N_2^\varphi(s,t)+N_2^\varphi(s',t'). 
		\end{align*}
		From that and \eqref{eq:increasing}, we get
		\begin{align*} N_2^\varphi(s+s',t+t')&=N_2^\varphi(|s+s'|,|t+t'|)\leq N_2^\varphi(|s|+|s'|, |t|+|t'|)\\
		&\leq N_2^\varphi(|s|, |t|)+N_2^\varphi(|s'|,|t'|)
		=N_2^\varphi(s,t)+N_2^\varphi(s',t')
		\end{align*}
		whenever $s,s'\neq 0$. The case where $s=0$ or $s'=0$ follows from the continuity of $N_2^\varphi$. This shows that $N_2^\varphi$ defines a norm on $\mathbb R^2$, and clearly it is absolute. 
	\end{proof}
	
	\begin{remark}
		In the proof of \cite[Proposition~4.1]{KaltonCompact}, it is assumed that $\varphi$ is such that $t \in (0,\infty) \mapsto t^{-1}(\varphi(t)+1)$ is monotone decreasing, which is obtained in the previous proof under the stronger hypothesis that $\varphi(t)\geq t-1$. For our purposes, there is no loss of generality. Indeed, when $X$ is a Banach space, it is easy to see that $\overline{\rho}_X$ is a 1-Lipschitz convex function such that $\lim_{t\to \infty}\overline{\rho}_X(t)/t=1$. Moreover, $\overline{\rho}_X(t)\geq \overline{\rho}_{c_0}(t)=\max\{0,t-1\}$ so that one can consider the norm $N_2^{\overline{\rho}_X}$.
	\end{remark}
	
	We will use in the sequel the following reformulation of Proposition~\ref{as-sequences} in terms of the norm $N_2^{\overline{\rho}_X}$.
	\begin{lemma}\label{l:asymptotic-Nnorm}
		Let $X$ be a Banach space.  If $(x_\alpha)_\alpha \subset X$ is weakly-null and $x\in X$, then 
		\[ \limsup_{\alpha} \| x+x_\alpha \| \leq N_2^{\overline{\rho}_X}(\norm{x},\limsup_\alpha \norm{x_\alpha}).\]
	\end{lemma}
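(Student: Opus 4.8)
The plan is to recognise that this lemma is essentially a restatement of the first inequality in Proposition~\ref{as-sequences} once the definition of $N_2^{\overline{\rho}_X}$ is unwound; beyond a case distinction on whether $x=0$, there is no new content. The role of $N_2^{\overline{\rho}_X}$ is purely to package the right-hand side of that proposition into a single symmetric-looking quantity, which will be convenient later, so the proof should be short.

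First I would record that $\overline{\rho}_X$ is admissible as the function $\varphi$ in the definition of $N_2^\varphi$: as observed in the remark preceding this lemma, $\overline{\rho}_X$ is $1$-Lipschitz, convex, satisfies $\lim_{t\to\infty}\overline{\rho}_X(t)/t=1$, and obeys $\overline{\rho}_X(t)\geq \max\{0,t-1\}\geq t-1$. Hence $N_2^{\overline{\rho}_X}$ is well defined. Since both $\norm{x}\geq 0$ and $\limsup_\alpha\norm{x_\alpha}\geq 0$, the absolute values appearing in the definition of $N_2^\varphi$ may simply be dropped.

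Next, in the main case $x\neq 0$, I would substitute $s=\norm{x}$ and $t=\limsup_\alpha\norm{x_\alpha}$ into the first branch of the definition of $N_2^\varphi$, which gives
\[ N_2^{\overline{\rho}_X}\big(\norm{x},\limsup_\alpha\norm{x_\alpha}\big) = \norm{x}\left(1+\overline{\rho}_X\left(\frac{\limsup_\alpha\norm{x_\alpha}}{\norm{x}}\right)\right). \]
The right-hand side here is exactly the upper bound furnished by Proposition~\ref{as-sequences} for $\limsup_\alpha\norm{x+x_\alpha}$, so applying that proposition immediately yields the desired inequality.

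Finally, for the degenerate case $x=0$ I would argue that both sides collapse to $\limsup_\alpha\norm{x_\alpha}$: the left-hand side because $x+x_\alpha=x_\alpha$, and the right-hand side by the second branch of the definition of $N_2^\varphi$ (the case $s=0$). Thus the inequality holds, in fact with equality, in this case too. The only point to be careful about is that Proposition~\ref{as-sequences} is stated only for $x\neq 0$, which is precisely why $x=0$ must be handled separately; I do not anticipate any genuine obstacle.
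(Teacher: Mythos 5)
Your proof is correct and follows exactly the paper's argument: split off the trivial case $x=0$, and for $x\neq 0$ apply Proposition~\ref{as-sequences} and observe that its right-hand side is precisely $N_2^{\overline{\rho}_X}(\norm{x},\limsup_\alpha\norm{x_\alpha})$ by definition. Nothing further is needed.
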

	
	\begin{proof}
		If $x=0$ there is nothing to do, so we may assume that $x\neq 0$.
		By application of Proposition~\ref{as-sequences} we see that 
		\[
		\begin{split}
		\limsup_{\alpha}\norm{x+x_\alpha}&\leq \norm{x}\left(1+\overline{\rho}_X\left(\frac{\limsup_\alpha\norm{x_\alpha}}{\norm{x}}\right)\right) \\
		&=N_2^{\overline{\rho}_X}(\norm{x},\limsup_\alpha \norm{x_\alpha}).
		\end{split}
		\]
	\end{proof}
	
	\begin{example}\label{ex:ausauc} Let us recall the asymptotic moduli for some classical spaces, the three first examples are mostly taken from \cite{Milman71}. 
		\begin{enumerate}[(i)]
			\item Let $1\leq p<\infty$. If $X=(\sum_{n=1}^\infty E_n)_p$, where $\dim(E_n)<\infty$, then $\d_{X}(t)=\p_{X}(t)=(1+t^p)^{1/p}-1$ for all $t>0$.  
			\item For any infinite set $\Gamma$ and $1\leq p<\infty$, $\d_{\ell_p(\Gamma)}(t)=\p_{\ell_p(\Gamma)}(t)=(1+t^p)^{1/p}-1$ for all $t>0$. 
			\item For any infinite set $\Gamma$, $\d_{c_0(\Gamma)}(t)=\p_{c_0(\Gamma)}(t)=\max\{0,t-1\}$ for all $t>0$. 
			\item Let $J$ be the James space. Then $\d_{J}(t)=(1+t^2)^{1/2}-1$ for all $t>0$, and there is an equivalent norm $|\cdot|$ on $J$ so that $\p_{|\cdot|}(t)\leq (1+t^2)^{1/2}-1$ (see Proposition~\ref{Jsmooth}).  
			\item If $X$ has a Schauder basis $(e_n)_{n=1}^\infty$ which satisfies a lower (resp. upper) $p$-estimate with constant one, then $\d_{X}(t)\geq (1+t^p)^{1/p}-1$ (resp. $\p_{X}(t)\leq (1+t^p)^{1/p}-1$). For instance, the Lorentz sequence space $d(w,p)$ satisfy an upper $p$-estimate with constant 1 \cite[p.~177]{LT77}. Moreover, $d(w,p)$ contains almost isometric copies of $\ell_p$ so $\p_{d(w,p)}(t)=(1+t^p)^{1/p}-1$ for all $t>0$.
			\item Given a Banach space $X$ with a $1$-unconditional Schauder basis $(e_n)_{n=1}^\infty$ and $1\leq p<\infty$, its $p$-convexification $X^p$ is defined as 
			\[X^p=\{(x_n)_{n=1}^\infty : \sum_{n=1}^\infty |x_n|^p e_n\in X\} \]
			with the norm $\norm{(x_n)_{n=1}^\infty}_p=\norm{\sum_{n=1}^\infty |x_n|^p e_n}^{1/p}$. It follows from the triangle inequality that $X^p$ satisfies an upper $p$-estimate with constant $1$ which readily implies that $\overline{\rho}_{X^p}(t)\leq (1+t^p)^{1/p}-1$. 
			\item  The predual $JT_*$ of the James tree space $JT$ as well as its dual $JT^*$ are asymptotically uniformly convex \cite{Girardi}. Moreover, there exists a positive constant $c$ so that $\d_{JT_*}(t) \geq ct^3$. 
			\item The Tsirelson space $\mathrm{T}$ can be renormed for every $p>1$ in such a way that $\d_\mathrm{T}(t) \geq c_p t^p$ \cite[Remarks 7.2]{KOS99}. By duality, we have that $\mathrm{T}^*$ can be renormed for every $q>1$ in such a way that $\p_{\mathrm{T}^*}(t) \leq c_q' t^q$. On the other hand, $\mathrm{T}$ does not have an equivalent AUS norm since otherwise it would follow from \cite{KOS99} (see also \cite{DKLR}) that the Szlenk index of $\mathrm{T}$ is $\omega$, which is not the case thanks to \cite{Odell2007} (see also \cite{Raja2013})).
		\end{enumerate}
	\end{example}
	
	\section{Proof of the main results} \label{section-mainresults}
	Let us begin with the proof of our main theorem.
	\begin{theorem} \label{TheoremWeakStarMP}
		Let $X,Y$ be Banach spaces. Assume that for every $t>0$, $\overline{\delta}_{X^*}^*(t)\geq \p_Y(t)$ and, for all $t\geq 1$,
		$\overline{\delta}_{X^*}^*(t)>t-1$. Then,
		\begin{enumerate}
			\item the pair $(X^*,Y)$ has the weak$^*$-to-weakMP.
			\item If moreover $Y\equiv Z^*$ is a dual space, then the pair $(X^*,Z^*)$ has the weak$^*$-to-weak$^*$MP.
			\item If $\d_{X^*}^*(t)=t$ then $(X^*,Z)$ has the W$^*$MP for any Banach space $Z$.
		\end{enumerate} 
		In particular, if $X$ is reflexive then the pair $(X^*,Y)$ has the WMP.
	\end{theorem}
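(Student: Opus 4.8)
The plan is to treat (1)--(3) by one common reduction and then read off the reflexive case for free. Fix $T$ with $\norm{T}=1$. Using the net reformulation of the Remark together with the weak$^*$-compactness of $B_{X^*}$, I replace the given non weak$^*$-null maximizing sequence by a maximizing net $(x_\alpha^*)\subset S_{X^*}$ with $x_\alpha^*\wsconv x^*$ for some $x^*\neq 0$; weak$^*$ lower semicontinuity of the norm gives $a:=\norm{x^*}\in(0,1]$, and writing $y_\alpha^*:=x_\alpha^*-x^*\wsconv 0$ the triangle inequality gives $a+b\geq 1$, where after passing to a subnet I set $b:=\lim_\alpha\norm{y_\alpha^*}$ and $b':=\lim_\alpha\norm{Ty_\alpha^*}\le b$.

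For (1) I use both halves of Proposition~\ref{as-sequences}. On the domain, $\norm{x^*+y_\alpha^*}\equiv 1$ and the weak$^*$-null convexity estimate give $a\bigl(1+\d_{X^*}^*(b/a)\bigr)\le 1$. Since $T$ is weak$^*$-to-weak continuous, $Ty_\alpha^*=Tx_\alpha^*-Tx^*\wconv 0$ is weakly null, so Lemma~\ref{l:asymptotic-Nnorm} applied to the maximizing net yields $1=\limsup_\alpha\norm{Tx_\alpha^*}\le N_2^{\p_Y}(c,b')$ with $c:=\norm{Tx^*}\le a$. Feeding in the hypothesis $\d_{X^*}^*\ge\p_Y$ and the coordinatewise monotonicity of the absolute norm $N_2^{\p_Y}$ (Lemma~\ref{Orlicz-Kalton}), the chain
\[ 1\le N_2^{\p_Y}(c,b')\le N_2^{\p_Y}(a,b)=a\bigl(1+\p_Y(b/a)\bigr)\le a\bigl(1+\d_{X^*}^*(b/a)\bigr)\le 1 \]
collapses to a string of equalities; in particular $N_2^{\p_Y}(a,b)=1$ and $a\bigl(1+\d_{X^*}^*(b/a)\bigr)=1$.

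The delicate point is then to upgrade this equality case to genuine norm attainment, i.e.\ to prove $c=a$, for then $x^*/a\in S_{X^*}$ and $\norm{T(x^*/a)}=1=\norm{T}$. This is exactly where the hypothesis $\d_{X^*}^*(t)>t-1$ for $t\ge 1$ enters: plugging it into $a\bigl(1+\d_{X^*}^*(b/a)\bigr)=1$ forces $b<1$ (if $b/a\ge 1$ it gives $1>b$, and if $b/a<1$ then $b<a\le 1$). Now if we had $c<a$, monotonicity would make $s\mapsto N_2^{\p_Y}(s,b)$ constantly equal to $1$ on $[c,a]$; but the computation in the proof of Lemma~\ref{Orlicz-Kalton} shows this can only happen when $\p_Y(b/s)=b/s-1$ there, whence $N_2^{\p_Y}(s,b)=b$ and the contradiction $b=1$. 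Hence $c=a$ and $T$ attains its norm. The final assertion is immediate: if $X$ is reflexive then the weak and weak$^*$ topologies on $X^*$ agree and every operator is weak-to-weak continuous, so the weak$^*$-to-weakMP from (1) is precisely the WMP.

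For (2) the reduction and the domain estimate are unchanged, but weak$^*$-to-weak$^*$ continuity only gives that $Ty_\alpha^*$ is weak$^*$-null in $Z^*$. The \emph{main obstacle} is thus to run the range-side estimate for a weak$^*$-null net: this requires the weak$^*$ analogue of Lemma~\ref{l:asymptotic-Nnorm}, namely the asymptotic smoothness estimate computed with weak$^*$-closed finite-codimensional subspaces of $Z^*$, together with the fact that it is still governed by $\p_Y$; granting this, the computation above goes through verbatim and yields norm attainment at $x^*/a$. For (3) no continuity of $T$ is available, but the assumption $\d_{X^*}^*(t)=t$ makes the domain estimate strong enough to bypass the range side entirely: it gives $a+b\le 1$, hence $a+b=1$, while $\limsup_\alpha\norm{Tx_\alpha^*}\le c+b$ forces $1\le c+b=c+(1-a)$, i.e.\ $c\ge a$; with $c\le a$ this yields $c=a$ and norm attainment at $x^*/a$, now for an arbitrary operator into an arbitrary space $Z$.
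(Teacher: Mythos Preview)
Your argument follows the paper's approach closely: extract a weak$^*$-convergent maximizing subnet, apply the two halves of Proposition~\ref{as-sequences} through the absolute norm $N_2^{\p_Y}$, and collapse the resulting chain of inequalities. The paper finishes (1) by observing that the three points $(c,b)$, $(a,b)$, $(0,1)$ all lie on the unit sphere of $N_2^{\p_Y}$, so if $c\neq a$ and $b\neq 0$ then $b=1$, which forces $\d_{X^*}^*(1/a)=1/a-1$ and contradicts the hypothesis. Your route---first show $b<1$ from $a(1+\d_{X^*}^*(b/a))=1$, then derive $b=1$ from $c<a$---is the same idea in contrapositive form.

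One small slip in your endgame: constancy of $s\mapsto N_2^{\p_Y}(s,b)=s\bigl(1+\p_Y(b/s)\bigr)$ on $[c,a]$ (with $c>0$) gives $\p_Y(b/s)=\tfrac{1}{s}-1$, i.e.\ $\p_Y(u)=\tfrac{u}{b}-1$ on $[b/a,b/c]$, not $\p_Y(b/s)=b/s-1$ as you wrote. The contradiction then comes from the $1$-Lipschitz property of $\p_Y$, which forces $1/b\le 1$, hence $b\ge 1$, against your $b<1$. (Equivalently, $\p_Y(u)\ge u-1$ gives $1/b\ge 1$; combining both yields $b=1$, which is what the paper obtains.) So your strategy is sound, but the sentence invoking ``the computation in the proof of Lemma~\ref{Orlicz-Kalton}'' needs this correction.

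Your treatment of (3) is cleaner than the paper's, which merely says the proof is ``very similar'': you bypass the range-side smoothness estimate entirely via the triangle inequality $1=\limsup_\alpha\norm{Tx_\alpha^*}\le c+b$, which together with $a+b=1$ gives $c\ge a$ directly. For (2) you correctly flag that a weak$^*$ analogue of Lemma~\ref{l:asymptotic-Nnorm} is needed for weak$^*$-null nets in $Z^*$; the paper also glosses over this point.
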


	\begin{proof}
		We will only prove the first assertion since the proof of the second and third one are very similar (replacing the weak topology in $Y$ by the weak$^*$ topology in $Y$). 
		
		Let $T\colon X^* \to Y$ be a bounded operator which is weak$^*$-to-weak continuous. Without loss of generality, we may assume that $T$ has norm 1. Let $(x_n)$ be a normalized maximizing sequence in $X^*$ which is not weak$^*$-null. Let $(x_\alpha)$ be a subnet which is weak$^*$ convergent to $x \neq 0$. By extracting a subnet again, we may assume that $\lim_\alpha \norm{x_\alpha-x}$ and $\lim_\alpha \norm{Tx-Tx_\alpha}$ exists. 
		Since $T$ is weak$^*$-to-weak continuous, we have that $Tx_\alpha \overset{w}{\longrightarrow} Tx$. Using Lemma~\ref{l:asymptotic-Nnorm}, we thus have the following estimates: 
		\begin{eqnarray*}
			1 &=& \norm T = \lim\limits_{\alpha} \norm{Tx_\alpha}  = \lim\limits_{\alpha} \norm{Tx+Tx_\alpha - Tx} \\
			&\leq& N_2^{\overline{\rho}_Y}(\norm{Tx},\lim_\alpha \norm{Tx_\alpha -Tx}).
		\end{eqnarray*}
		Now notice that (we use Lemma~\ref{Orlicz-Kalton})
		\begin{eqnarray*}
			N_2^{\overline{\rho}_Y}(\norm{Tx},\lim_\alpha \norm{Tx_\alpha -Tx}) &\leq& N_2^{\overline{\rho}_Y}(\norm{Tx},\lim_\alpha \norm{x_\alpha -x}) \\
			&\leq& N_2^{\overline{\rho}_Y}(\norm{x},\lim_\alpha \norm{x_\alpha -x}).
		\end{eqnarray*}
		Since $x \neq 0$, we deduce from the definition of $N_2^{\overline{\rho}_Y}$ the following estimates:
		\begin{eqnarray*}
			N_2^{\overline{\rho}_Y}(\norm{x},\lim_\alpha \norm{x_\alpha -x}) &\leq& \|x\|  +\|x\|\overline{\rho}_Y\left( \frac{\lim_\alpha \norm{x_\alpha -x}}{\|x\|} 
			\right) \\
			&\leq& \|x\|  +\|x\|\overline{\delta}_{X^*}^*\left( \frac{\lim_\alpha \norm{x_\alpha -x}}{\|x\|}  \right) \\
			&\leq& \lim_\alpha \norm{x +x_\alpha -x} \\
			&=& 1.
		\end{eqnarray*}
		This implies that all the previous inequalities are in fact equalities, in particular
		\begin{eqnarray*} N_2^{\p_Y}(\norm{Tx},\lim_\alpha \norm{x_\alpha -x}) = N_2^{\p_Y}(\norm{x},\lim_\alpha \norm{x_\alpha -x})=1 = N_2^{\p_Y}(0,1).
		\end{eqnarray*}
		This means that the points $(\norm{Tx},\lim_\alpha \norm{x_\alpha -x})$, $(\norm{x},\lim_\alpha \norm{x_\alpha -x})$ and $(0,1)$ are aligned in $\mathbb R^2$. If $\norm{Tx}=\norm{x}$ or $\lim_\alpha \norm{x-x_\alpha}=0$, then $T$ attains its norm at $x$ and we are done. Otherwise, it follows that $\lim_\alpha\norm{x_\alpha-x}=1$ and  
		\[ 1= N_2^{\p_Y}(\norm{x},1) = \norm{x}+\norm{x}\p_Y(1/\norm{x}) = \norm{x}+\norm{x}\d_{X^*}^*(1/\norm{x}). \]
		That is, $\d_{X^*}(\frac{1}{\norm{x}})=\p_Y(\frac{1}{\norm{x}})=\frac{1}{\norm{x}}-1$, which contradicts our assumptions. 
	\end{proof}
	
	\begin{remark}\label{rem:failing} Take $X=\mathbb R\oplus_{\infty} \ell_2$ and $Y=c_0$. Then $\d_X(t)=\max\{0,t-1\}=\p_Y(t)$ for all $t>0$. However, the operator $T\colon X\to Y$ given by $T((0,e_n))=\frac{n}{n+1}e_n$ and $T((1,0))=0$ does not attain the norm and admits the non-weakly null maximizing sequence $(x_n)_{n=1}^\infty$ given by $x_n=(1,e_n)$. Thus, the pair $(X, Y)$ fails the WMP. 
	\end{remark}
	
	\begin{remark} Since $\p_Y$ is 1-Lipschitz and $\p_Y(t) \geq \max\{0,t-1\}$, the condition $\p_Y(1)=0$ is equivalent to $\p_Y(t)=\max\{0,t-1\}$ for all $t>0$. These spaces are called metric weak$^*$ Kadec-Klee spaces. In the separable case, they are precisely those spaces which are $(1+\varepsilon)$-isomorphic to a subspace of $c_0$, for every $\varepsilon>0$ \cite{GKL00}.
	\end{remark}
	
	The next result follows the idea of \cite[Theorem 2]{Pellegrino_09}, where the same  is obtained in the case $X^*=\ell_p$, $Y=\ell_q$, $p\neq q$. 
	
	\begin{corollary} \label{cor:strongconv}
		Let $X,Y$ be Banach spaces. Assume that $X$ is separable and 
		$\overline{\delta}_{X^*}^*(t) >  \p_Y(t)$ for all $t>0$. Let $T\colon X^*\to Y$ be a weak$^*$-to-weak continuous operator. Then any non-weak$^*$ null maximizing sequence for $T$ has a convergent subsequence. 
	\end{corollary}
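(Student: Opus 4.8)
The plan is to follow the proof of Theorem~\ref{TheoremWeakStarMP} almost verbatim, with two modifications: separability is used to replace the weak$^*$-convergent subnet by an honest subsequence, and the strict inequality $\d_{X^*}^*(t)>\p_Y(t)$ is exploited to force the limiting norm distance to the weak$^*$-limit to vanish. Throughout I normalise so that $\norm{T}=1$ (the case $T=0$ being trivial).

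First I would use separability to extract a good subsequence. Since $(x_n)$ is not weak$^*$-null, there exist $\varphi\in X$ and $\ep>0$ with $\abs{\duality{x_n,\varphi}}\geq\ep$ for infinitely many $n$; pass to that subsequence. As $X$ is separable, $B_{X^*}$ is weak$^*$-metrizable and hence weak$^*$-sequentially compact, so a further subsequence (still denoted $(x_n)$) converges weak$^*$ to some $x\in X^*$. Weak$^*$ convergence gives $\abs{\duality{x,\varphi}}\geq\ep$, so $x\neq 0$. Passing to a subsequence once more, I may assume that $s:=\lim_n\norm{x_n-x}$ exists.

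Next I run the estimate chain of Theorem~\ref{TheoremWeakStarMP}. The sequence $(x_n-x)$ is weak$^*$-null and, by weak$^*$-to-weak continuity, $(Tx_n-Tx)$ is weakly null in $Y$. Applying Lemma~\ref{l:asymptotic-Nnorm} to $(Tx_n-Tx)$, the monotonicity of $N_2^{\p_Y}$ from Lemma~\ref{Orlicz-Kalton}, together with $\norm{Tx_n-Tx}\leq\norm{x_n-x}$ and $\norm{Tx}\leq\norm{x}$, yields
\[ 1=\lim_n\norm{Tx_n}\leq N_2^{\p_Y}(\norm{x},s)=\norm{x}+\norm{x}\,\p_Y\!\left(\tfrac{s}{\norm{x}}\right). \]
On the other hand, applying the weak$^*$ part of Proposition~\ref{as-sequences} to the weak$^*$-null sequence $(x_n-x)$ and using $\norm{x+(x_n-x)}=\norm{x_n}=1$ gives
\[ 1\geq\norm{x}+\norm{x}\,\d_{X^*}^*\!\left(\tfrac{s}{\norm{x}}\right). \]

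Finally I would combine the two displays. Cancelling $\norm{x}>0$ leaves $\p_Y(s/\norm{x})\geq\d_{X^*}^*(s/\norm{x})$. Since the hypothesis $\d_{X^*}^*(t)>\p_Y(t)$ holds for every $t>0$, this is impossible unless $s/\norm{x}=0$, i.e.\ $s=0$. Hence $\norm{x_n-x}\to 0$ and the extracted subsequence converges in norm to $x$, as required. The only genuinely new ingredient compared with Theorem~\ref{TheoremWeakStarMP} is the very first step: producing a weak$^*$-convergent subsequence with \emph{nonzero} limit, which is exactly where separability (through weak$^*$-sequential compactness of $B_{X^*}$) is indispensable. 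Everything afterwards is the same two-sided squeeze, with the strict inequality now playing the role that the condition $\d_{X^*}^*(t)>t-1$ played before, so I expect no further obstacle.
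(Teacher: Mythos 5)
Your argument is correct and is essentially the paper's own proof written out in detail: the paper likewise uses separability to extract a weak$^*$-convergent subsequence with nonzero limit $x$ and existing $\lim_n\norm{x_n-x}=s$, reruns the two-sided squeeze from Theorem~\ref{TheoremWeakStarMP} to get $\p_Y(s/\norm{x})\geq\d^*_{X^*}(s/\norm{x})$, and concludes $s=0$ from the strict inequality. (Both you and the paper tacitly assume $T\neq 0$ when normalising; that degenerate case aside, there is nothing to add.)
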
 
	
	\begin{proof}
		Let $(x_n)_{n=1}^\infty \subset S_{X^*}$ be a non-weak$^*$ null maximizing sequence for $T$. By extracting a subsequence, we may assume that $(x_n)$ is weak$^*$ convergent to $x\neq 0$ and the limit $\lim_n \norm{x_n-x}$ exists. The proof of Theorem~\ref{TheoremWeakStarMP} shows that $T$ attains its norm at $x$, and moreover 
		\[ \p_Y\left(\frac{\lim_n \norm{x_n-x}}{\norm{x}}\right)=\d^*_{X^*}\left(\frac{\lim_n\norm{x_n-x}}{\norm{x}}\right). \]
		Thus $\lim_n\norm{x_n-x}=0$.
	\end{proof}
	
	Note that, as it is shown in \cite{Pellegrino_09}, there are pairs $(X,Y)$ with the WMP such that the conclusion of Corollary~\ref{cor:strongconv} fails. Indeed consider $X=Y=\ell_2$. Then  $(\frac{e_1+e_n}{\sqrt{2}})_{n\geq1}$ is a non-weakly null maximizing sequence for the identity operator $I\colon \ell_2\to\ell_2$ which has no norm convergent subsequence.

	We will now turn to the case when the modulus of asymptotic uniform smoothness of $X$ is bounded above by the modulus of asymptotic uniform convexity of $Y$. To this aim, we will need the following two lemmata.
	
	\begin{lemma}[Proposition 2.3 in \cite{JLPS02}] \label{CompactOperator}  
		
		Let $X,Y$ be Banach spaces. If there exists $t>0$ such that $\p_X(t) < \d_Y(t)$
		then every linear operator from $X$ to $Y$ is compact. 
	\end{lemma}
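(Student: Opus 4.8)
The plan is to argue by contradiction. Scaling, I may assume $T\neq 0$ and $\norm{T}=1$, and I suppose that $T$ is \emph{not} compact. The goal is to build, along a weakly null sequence and its image, partial sums whose norms in $X$ grow strictly more slowly than the norms of their images in $Y$; since $\norm{T}=1$ forces $\norm{TS}\le\norm{S}$ for every $S$, this is impossible, and the contradiction yields compactness of $T$.

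\textbf{Step 1: extract the asymptotic datum.} First I would produce a normalized, weakly null sequence $(x_n)\subset X$ such that $(Tx_n)$ is weakly null in $Y$ and $\inf_n\norm{Tx_n}=\delta>0$. Non-compactness gives a sequence $(u_n)\subset B_X$ with $(Tu_n)$ being $\varepsilon$-separated. To turn this into a weakly null sequence I would pass to a weak$^*$-cluster point $\xi$ of $(u_n)$ in $X^{**}$ and look at differences $u_n-u_m$ taken along a subnet converging weak$^*$ to $\xi$: these converge weak$^*$ to $\xi-\xi=0$ in $X^{**}$, hence are weakly null in $X$, while their images remain $\varepsilon$-separated in norm. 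Selecting a genuinely weakly null subsequence (a weakly null sequence can never be equivalent to the $\ell_1$-basis, so the usual $\ell_1$-alternative is automatically excluded) and normalizing produces $(x_n)$. This is precisely the step where the lack of reflexivity must be handled, and where the hypothesis is used implicitly: since $\overline{\delta}_Y(t)\le t$ always, $\p_X(t_0)<\d_Y(t_0)$ forces $\p_X(t_0)<t_0$, which prevents $X$ from being Schur-like and guarantees that such a weakly null sequence with images bounded away from $0$ exists.

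\textbf{Step 2: one-step estimates and iteration.} Next I would record two companion inequalities. Proposition~\ref{as-sequences} already gives, for a weakly null net $(u_\alpha)$ in $X$ and $x\neq 0$, the upper estimate $\limsup_\alpha\norm{x+u_\alpha}\le\norm{x}\big(1+\p_X(\limsup_\alpha\norm{u_\alpha}/\norm{x})\big)$; by the symmetric argument (replacing the smoothness modulus by the convexity modulus and reversing the inequality via finite-codimensional subspaces) one obtains, for a weakly null net $(v_\alpha)$ in $Y$ and $y\neq 0$, the lower estimate $\liminf_\alpha\norm{y+v_\alpha}\ge\norm{y}\big(1+\d_Y(\liminf_\alpha\norm{v_\alpha}/\norm{y})\big)$. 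Using weak nullity of $(x_n)$ and of $(Tx_n)$, I would then choose indices $k_1<k_2<\cdots$ inductively so that, writing $S_n=\sum_{i\le n}x_{k_i}$, the numbers $\norm{S_n}$ satisfy an upper recursion governed by $\p_X$ and the numbers $\norm{TS_n}=\norm{\sum_{i\le n}Tx_{k_i}}$ satisfy a lower recursion governed by $\d_Y$, each successive index being taken large enough that the corresponding one-step error is summable.

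\textbf{Step 3: the growth comparison, and the main obstacle.} Finally, since $\norm{TS_n}\le\norm{S_n}$ for all $n$, the contradiction must be squeezed out of the strict inequality $\p_X(t_0)<\d_Y(t_0)$: the lower recursion for $\norm{TS_n}$ has to outgrow the upper recursion for $\norm{S_n}$. Here I would use that $\overline{\rho}_X$ is convex, so that $t\mapsto\overline{\rho}_X(t)/t$ is nondecreasing, in order to transport the gap at $t_0$ to the scales actually visited by the two recursions, and I would tune the construction (rescaling the added vectors, or arranging the successive ratios to equal $t_0$) so that the two estimates become numerically incompatible as $n\to\infty$. I expect \emph{this quantitative comparison to be the main obstacle}: a gap between the moduli at a single value $t_0$ must be converted into genuinely conflicting growth rates for the partial sums, and this has to be combined with the delicate non-reflexive extraction of Step~1. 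Once both are in place, the impossibility of $\norm{TS_n}\le\norm{S_n}$ delivers the contradiction, and hence every bounded operator from $X$ to $Y$ is compact.
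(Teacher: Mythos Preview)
The paper does not give its own proof of this lemma: it is quoted as Proposition~2.3 of \cite{JLPS02} and used as a black box, so there is nothing in the present paper to compare your argument against.

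On the merits, your overall strategy (non-compactness $\Rightarrow$ weakly null $(x_n)$ with images bounded below $\Rightarrow$ iterate the one-step AUS/AUC estimates on partial sums to contradict $\norm{T}=1$) is indeed the one used in \cite{JLPS02}. Two places, however, are not yet proofs. In Step~1 the passage from ``$T$ not compact'' to ``normalized weakly null $(x_n)$ with $\inf_n\norm{Tx_n}>0$'' is not automatic when $X$ is non-reflexive. Your weak$^*$-cluster-plus-differences idea is on the right track, but the parenthetical ``a weakly null sequence can never be equivalent to the $\ell_1$-basis, so the usual $\ell_1$-alternative is automatically excluded'' is circular: at that point you only have a weakly null \emph{net}, and you are trying to justify the extraction of a weakly null \emph{sequence}. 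The clean argument applies Rosenthal's $\ell_1$-theorem to the original $(u_n)$: in the weakly Cauchy branch, successive differences give exactly the weakly null sequence you need; the $\ell_1$-branch must be handled separately (and your remark that $\p_X(t_0)<t_0$ ``prevents $X$ from being Schur-like'' does not by itself exclude an isomorphic copy of $\ell_1$ in $X$).

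In Step~3 you correctly identify the crux, but ``tune the construction so that the successive ratios equal $t_0$'' \emph{is} the whole proof: one must choose the scalars in front of the $x_{k_i}$ so that both ratios $\norm{x_{k_{n+1}}}/\norm{S_n}$ and $\norm{Tx_{k_{n+1}}}/\norm{TS_n}$ stay in a region where the strict gap $\d_Y(t_0)-\p_X(t_0)>0$ actually bites, and then keep track of the accumulated errors. None of this is fatal --- the plan is the right one --- but as written it is a sketch rather than a proof.
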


	The next one is classical and easy to prove.

	\begin{lemma} \label{CompactAttainment}
		Let $X, Y$ be Banach spaces and let $\tau_Y$ be any Hausdorff topology on $Y$ which is coarser than the norm topology. If $T\colon X^*\to Y$ is a compact operator which is weak$^*$-to-$\tau_Y$ continuous, then $T$ attains its norm.
	\end{lemma}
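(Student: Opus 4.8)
The plan is to take a normalized maximizing sequence, extract a weak$^*$ cluster point via Banach--Alaoglu, and then use compactness to upgrade the resulting $\tau_Y$-convergence to \emph{norm} convergence, which will force the norm to be attained at the cluster point. Normalizing so that $\norm{T}=1$, let $(x_n)\subset S_{X^*}$ satisfy $\norm{Tx_n}\to 1$. Since $B_{X^*}$ is weak$^*$-compact by Banach--Alaoglu, the sequence $(x_n)$ admits a subnet $(x_\alpha)$ which is weak$^*$-convergent to some $x\in B_{X^*}$; in particular $\norm{x}\le 1$. Because $T$ is weak$^*$-to-$\tau_Y$ continuous and continuity preserves net limits, we get that $Tx_\alpha\to Tx$ in $\tau_Y$.

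Next I would pass to a further subnet to recover norm convergence. As $T$ is compact, the closure $\overline{T(B_{X^*})}$ is norm-compact, so the net $(Tx_\alpha)$ has a subnet $(Tx_\beta)$ that converges in norm to some $y\in Y$. Since $(x_\beta)$ is a subnet of the maximizing sequence, $\norm{y}=\lim_\beta\norm{Tx_\beta}=1$. Now the norm topology is finer than $\tau_Y$, so $Tx_\beta\to y$ also holds in $\tau_Y$; but $(Tx_\beta)$ is a subnet of $(Tx_\alpha)$, hence it still satisfies $Tx_\beta\to Tx$ in $\tau_Y$. Here the Hausdorff hypothesis on $\tau_Y$ is decisive: limits of nets are then unique, so $y=Tx$ and therefore $\norm{Tx}=1$.

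It remains only to check that the attainment takes place at a unit vector. From $1=\norm{Tx}\le\norm{T}\,\norm{x}=\norm{x}\le 1$ we conclude $\norm{x}=1$, so that $x\in S_{X^*}$ with $\norm{Tx}=\norm{T}$, which is exactly the desired conclusion. The only genuine subtlety to be careful about is that weak$^*$ compactness of $B_{X^*}$ is not weak$^*$ \emph{sequential} compactness in general (this would require, say, $X$ separable), which is why the whole argument must be run with nets and iterated subnet extractions rather than with subsequences; correspondingly, the Hausdorffness of $\tau_Y$ is precisely what allows one to identify the norm-limit $y$ with the $\tau_Y$-limit $Tx$.
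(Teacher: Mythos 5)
Your proof is correct and follows essentially the same route as the paper: extract a weak$^*$-convergent subnet of a maximizing sequence, push it through $T$, and use norm-compactness of $\overline{T(B_{X^*})}$ together with Hausdorffness of $\tau_Y$ to identify the $\tau_Y$-limit $Tx$ with a norm-limit of norm one. The paper simply invokes the standard fact that $\tau_Y$ and the norm topology coincide on the norm-compact set $\overline{T(B_{X^*})}$, whereas you re-derive that fact via a further subnet extraction and uniqueness of limits; the content is identical.
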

	\begin{proof}
		Let $(x_\alpha^*)$ be a net in $S_{X^*}$ such that $\lim_\alpha\norm{Tx^*_\alpha}=\norm{T}$, we may assume that $(x^*_\alpha)$ is weak$^*$-convergent to $x^*\in B_{X^*}$. Then $(Tx^*_\alpha)$ is $\tau_Y$-convergent to $Tx^*$. Since the norm topology and $\tau_Y$ agree on the norm-compact set $\overline{T(B_{X^*})}$, we have that $\norm{T}=\lim_\alpha\norm{Tx^*_\alpha}=\norm{Tx^*}$. 
	\end{proof}

	We are now ready to prove the desired theorem.
	
	\begin{theorem} \label{TheoremCompactWMP} 
		Let $X,Y$ be Banach spaces and $\eqnorma$ be an equivalent norm on $X^*$. If there exists $t>0$ such that $\p_{\eqnorma}(t) < \d_Y(t)$, then
		\begin{enumerate}
			\item the pair $(X^*,Y)$ has the weak$^*$-to-weakMP.
			\item If moreover $Y \equiv Z^*$ is a dual space, then $(X^*,Z^*)$ has the weak$^*$-to-weak$^*$MP.
		\end{enumerate}
		In particular, if $X$ is reflexive then $(X^*,Y)$ has the WMP. 
	\end{theorem}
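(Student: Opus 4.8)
The plan is to combine the two preceding lemmas, and the whole statement falls out almost immediately. The one conceptual point to keep in mind is that the hypothesis concerns the \emph{equivalent} norm $\eqnorma$ on $X^*$, whereas the properties we want to verify (norm attainment, the maximizing properties) refer to the original norm. This is harmless because both compactness of an operator and the weak$^*$ topology on $X^*$ are insensitive to passing to an equivalent norm.

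First I would apply Lemma~\ref{CompactOperator} to the pair $\big((X^*,\eqnorma),Y\big)$: since $\p_{\eqnorma}(t)<\d_Y(t)$ for some $t>0$, every bounded linear operator from $(X^*,\eqnorma)$ into $Y$ is compact. Because $\eqnorma$ is equivalent to the original norm, the identity is an isomorphism between $X^*$ (with its original norm) and $(X^*,\eqnorma)$; hence $\mathcal{L}(X^*,Y)$ is the same space of operators in either norm, and compactness is unaffected by the choice. Consequently \emph{every} operator $T\in\mathcal{L}(X^*,Y)$ is compact.

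For assertion (1) I would take an arbitrary weak$^*$-to-weak continuous $T\in\mathcal{L}(X^*,Y)$; it is compact by the previous step, and since the weak topology on $Y$ is Hausdorff and coarser than the norm topology, Lemma~\ref{CompactAttainment} (with $\tau_Y$ the weak topology) shows that $T$ attains its norm. In fact every such operator attains its norm, so the non weak$^*$-null maximizing sequence hypothesis is not even needed and the weak$^*$-to-weakMP holds trivially. Assertion (2) runs identically, taking $\tau_Y$ to be the weak$^*$ topology on $Z^*$, which is likewise Hausdorff and coarser than the norm topology. For the reflexive case I would observe that reflexivity of $X$ forces reflexivity of $X^*$, whence the weak and weak$^*$ topologies coincide on $X^*$ and every bounded operator is automatically weak$^*$-to-weak continuous; thus the WMP reduces to the weak$^*$-to-weakMP already established in (1).

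I do not expect a genuine obstacle: the entire content has been absorbed into Lemmas~\ref{CompactOperator} and~\ref{CompactAttainment}. The only step demanding a moment's care is the renorming bookkeeping in the first paragraph, namely checking that the compactness conclusion obtained for the norm $\eqnorma$ transfers back to the original norm, which it does precisely because compactness and the weak$^*$ topology are renorming-invariant. Once that is in place, the ``maximizing sequence'' hypothesis plays no role whatsoever, reflecting the fact that this regime is really a Pitt-type compactness phenomenon rather than a delicate norm-attainment argument.
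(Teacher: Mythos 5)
Your argument is correct and follows essentially the same route as the paper: apply Lemma~\ref{CompactOperator} to $\bigl((X^*,\eqnorma),Y\bigr)$, transfer compactness back through the equivalent norm, and conclude norm attainment of every weak$^*$-to-weak (resp.\ weak$^*$-to-weak$^*$) continuous operator via Lemma~\ref{CompactAttainment}, so that the maximizing-sequence hypothesis is never used. The renorming bookkeeping and the reduction of the reflexive case to assertion~(1) are exactly as in the paper's proof.
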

	\begin{proof}
		We only prove the first assertion, the proof of the second one is similar and the last statement is quite obvious. Thanks to Lemma~\ref{CompactOperator}, we know that every bounded linear operator from $(X^*,\eqnorma)$ to $Y$ is compact. 
		Since $\eqnorma$ is an equivalent norm on $X^*$, this readily implies that every bounded linear operator from $X^*$ to $Y$ is compact. 
		According to Lemma~\ref{CompactAttainment}, we deduce that every weak$^*$-to-weak continuous operator from $X^*$ to $Y$ attains its norm. 
	\end{proof}
	
	We highlight some consequences of the previous theorems for particular spaces, more examples will be given in Section~\ref{section-applications}. In all the cases the proof follows from  Theorem~\ref{TheoremWeakStarMP} or Theorem~\ref{TheoremCompactWMP} and Example~\ref{ex:ausauc}. 
	
	\begin{corollary} \label{CorWMP}
		Any of the following conditions ensures that the pair $(X,Y)$ has the WMP: 
		\begin{enumerate}[a)] 
			\item $X$ is a reflexive AUC space with $\d_X(t)>t-1$ (e.g. $X=\ell_p, \ell_p(\Gamma)$) and $Y=c_0$.
			\item $X=\ell_p(\Gamma)$  and $Y=\ell_q(\Gamma')$, $1<p<\infty, 1\leq q<\infty$. 
			\item $X=(\sum_{n=1}^\infty E_n)_p$ where $\dim(E_n)<\infty$ and $Y=(\sum_{n=1}^\infty F_n)_p$ where $\dim(F_n)<\infty$, $1<p<\infty, 1\leq q<\infty$.
			\item $X=\ell_p$ and either $Y=d(w,q)$ (as in Example~\ref{ex:ausauc}~$\mathrm{(v)}$) or $Y= Z^q$ (as in Example~\ref{ex:ausauc}~$\mathrm{(vi)}$), with $1<p\leq q<\infty$.
		\end{enumerate}
	\end{corollary}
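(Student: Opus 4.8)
The plan is to observe that in every one of the four cases the domain space is reflexive, so the statement reduces entirely to the two theorems already proved. Concretely, if $A$ is reflexive then $A=(A^*)^*$, so $A$ plays the role of the dual space $X^*$ appearing in Theorem~\ref{TheoremWeakStarMP} and Theorem~\ref{TheoremCompactWMP} with $X=A^*$ (again reflexive). For a reflexive space the weak and weak$^*$ topologies coincide and every finite-codimensional subspace is weak$^*$-closed, whence $\d_A^*=\d_A$; moreover one may take the auxiliary equivalent norm $\eqnorma$ on $X^*=A$ to be the original norm, so that $\p_{\eqnorma}=\p_A$. Thus the ``in particular'' clauses of the two theorems say exactly: if $A$ is reflexive and either (i) $\d_A(t)\geq \p_Y(t)$ for all $t>0$ together with $\d_A(t)>t-1$ for $t\geq 1$, or (ii) $\p_A(t)<\d_Y(t)$ for some $t>0$, then $(A,Y)$ has the WMP. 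It therefore only remains to read off the relevant moduli from Example~\ref{ex:ausauc} and compare them.

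The single elementary computation underlying all comparisons is the monotonicity of the two-dimensional $\ell_r$-norm: for $1\leq p\leq q$ and every $t>0$ one has $(1+t^p)^{1/p}\geq (1+t^q)^{1/q}$, with strict inequality when $p<q$, and $(1+t^p)^{1/p}>t$ always (since $1+t^p>t^p$). These two facts, fed into the moduli recorded in Example~\ref{ex:ausauc}, handle each case.

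I would then argue case by case. In (a), since $Y=c_0$ has $\p_{c_0}(t)=\max\{0,t-1\}$, the hypothesis $\d_X(t)>t-1$ (for $t\geq 1$) together with AUC (giving $\d_X(t)>0$, hence $\d_X(t)\geq \p_{c_0}(t)=0$ for $t\leq 1$) supplies condition (i), so Theorem~\ref{TheoremWeakStarMP} applies. In (c) one has the equality $\d_X(t)=\p_Y(t)=(1+t^p)^{1/p}-1$ from Example~\ref{ex:ausauc}~(i), and $(1+t^p)^{1/p}-1>t-1$, so again condition (i) holds and Theorem~\ref{TheoremWeakStarMP} applies. In (d), Example~\ref{ex:ausauc}~(v)--(vi) give $\p_Y(t)\leq (1+t^q)^{1/q}-1$, and for $p\leq q$ the monotonicity inequality yields $\d_{\ell_p}(t)=(1+t^p)^{1/p}-1\geq (1+t^q)^{1/q}-1\geq \p_Y(t)$ together with $\d_{\ell_p}(t)>t-1$, so condition (i) holds and Theorem~\ref{TheoremWeakStarMP} applies once more.

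Case (b) is the only one that splits, and this is the step to handle with the most care. For $p\leq q$ the argument is as above: $\d_{\ell_p(\Gamma)}(t)=(1+t^p)^{1/p}-1\geq (1+t^q)^{1/q}-1=\p_{\ell_q(\Gamma')}(t)$ and $\d_{\ell_p}(t)>t-1$, so Theorem~\ref{TheoremWeakStarMP} gives the WMP. For $1\leq q<p$ one instead invokes the compactness route: by strict monotonicity $\p_{\ell_p}(t)=(1+t^p)^{1/p}-1<(1+t^q)^{1/q}-1=\d_{\ell_q}(t)$ for every $t>0$ (when $q=1$ this reads $\p_{\ell_p}(t)<t=\d_{\ell_1}(t)$, valid by the triangle inequality for $p>1$), so condition (ii) holds and Theorem~\ref{TheoremCompactWMP} applies. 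I expect no genuine obstacle beyond correctly matching each regime to the right theorem: the ``convex domain dominates smooth range'' regime ($p\leq q$) to Theorem~\ref{TheoremWeakStarMP}, and the ``smooth domain dominated by convex range'' regime ($q<p$), in which every operator is forced to be compact via Lemma~\ref{CompactOperator}, to Theorem~\ref{TheoremCompactWMP}.
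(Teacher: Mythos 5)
Your proposal is correct and follows exactly the route the paper intends: the corollary is stated as an immediate consequence of Theorem~\ref{TheoremWeakStarMP}, Theorem~\ref{TheoremCompactWMP} and the moduli listed in Example~\ref{ex:ausauc}, and your case-by-case verification (including the reduction $\d_A^*=\d_A$ for reflexive $A$ and the split of case~b) into the regimes $p\leq q$ via Theorem~\ref{TheoremWeakStarMP} and $q<p$ via the compactness route of Theorem~\ref{TheoremCompactWMP}) is precisely the intended argument. The only minor point is that in case~c) you read the statement literally with both sums indexed by $p$; if the second exponent is meant to be $q$, the same dichotomy as in case~b) applies verbatim.
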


	\section{Renorming results} \label{section-renorming}

	First, we wish to underline that for a special class of Banach spaces, namely those $X$ with $\Sz(X) = \omega$, one can find an equivalent norm $\eqnorma$ on $X^*$ and a reflexive Banach space $Y$ such that the pair $\big((X^*,\eqnorma),Y\big)$ has the weak$^*$-to-weak$^*$MP. By $\Sz(X)$ we mean the Szlenk index of $X$ and $\omega$ stands for the first countable ordinal. We refer the reader to \cite{LancienSurvey} for the definition of the Szlenk index as well as its basic properties and main applications to the geometry of Banach spaces. 
	\begin{corollary}
		If $X$ is separable with $\Sz(X) = \omega$, then there exists an equivalent norm $\eqnorma$ on $X^*$ and $q \geq 1$ such that the pair $\big((X^*,\eqnorma),\ell_q\big)$ has the weak$^*$-to-weak$^*$MP. 
		
		In particular if $X\equiv Z^*$ is a separable reflexive space with $\Sz(Z)= \omega$, then there exists an equivalent norm $\eqnorma$ on $X$ and there exists $q \geq 1$ such that the pair $\big((X,\eqnorma),\ell_q\big)$ has the WMP.
	\end{corollary}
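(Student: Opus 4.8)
The plan is to reduce everything to Theorem~\ref{TheoremWeakStarMP}~(2), applied to the pair $\big((X^*,\eqnorma),\ell_q\big)$ for a suitable conjugate exponent $q$, after renorming $X^*$. Before that I would dispatch the ``in particular'' clause: if $X\equiv Z^*$ with $Z$ separable reflexive and $\Sz(Z)=\omega$, I would apply the first assertion to $Z$ to obtain an equivalent norm $\eqnorma$ on $Z^*=X$ and some $q\geq 1$ for which $\big((X,\eqnorma),\ell_q\big)$ has the weak$^*$-to-weak$^*$MP; since $X=Z^*$ is reflexive, the weak and weak$^*$ topologies on $X$ coincide, so this property is precisely the WMP. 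Thus the work is entirely in the first assertion.

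For the first assertion, the starting point is the renorming theory of spaces of Szlenk index $\omega$: since $X$ is separable with $\Sz(X)=\omega$, it admits an equivalent norm $\eqnorma_X$ that is $p$-AUS for some $1<p<\infty$ \cite{KOS99}. Writing $q\in(1,\infty)$ for the conjugate exponent, Proposition~\ref{duality}~(ii) tells me that the dual norm $\eqnorma=\eqnorma_X^*$ is $q$-AUC$^*$. The aim is to promote this to the clean, \emph{global} estimate
\[ \d^*_{(X^*,\eqnorma)}(t)\ \geq\ (1+t^q)^{1/q}-1\ =\ \p_{\ell_q}(t)\qquad\text{for all }t>0. \]
To reach it I would not work with an abstract $p$-AUS norm (whose modulus is only controlled by $Ct^p$ near $0$, with an unhelpful constant), but instead use the structural fact that a separable space with $\Sz(X)=\omega$ embeds into a space with a shrinking FDD satisfying an upper $\ell_p$-estimate with constant one; transferring the norm and combining the computation underlying Example~\ref{ex:ausauc}~(v) (an upper $\ell_p$-estimate with constant one forces $\p(t)\leq(1+t^p)^{1/p}-1$) with the monotonicity of the smoothness modulus under passing to subspaces yields $\p_{\eqnorma_X}(t)\leq(1+t^p)^{1/p}-1$ for all $t$, and dualising this clean upper estimate produces the displayed lower estimate for $\d^*_{(X^*,\eqnorma)}$.

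With the global comparison in hand, checking the hypotheses of Theorem~\ref{TheoremWeakStarMP} is immediate. The inequality $\d^*_{(X^*,\eqnorma)}(t)\geq\p_{\ell_q}(t)$ holds for every $t>0$ by construction, and the second hypothesis comes for free: since $q>1$,
\[ \p_{\ell_q}(t)=(1+t^q)^{1/q}-1>(t^q)^{1/q}-1=t-1\qquad\text{for all }t\geq 1, \]
so that $\d^*_{(X^*,\eqnorma)}(t)>t-1$ on $[1,\infty)$ automatically. As $\ell_q$ is reflexive, hence a dual space, Theorem~\ref{TheoremWeakStarMP}~(2) then gives that $\big((X^*,\eqnorma),\ell_q\big)$ has the weak$^*$-to-weak$^*$MP, which is what is wanted.

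The main obstacle is the middle step: passing from the qualitative fact ``$\Sz(X)=\omega$ implies a $p$-AUS renorming'' to the precise global modulus comparison $\d^*_{(X^*,\eqnorma)}(t)\geq\p_{\ell_q}(t)$ for \emph{all} $t$. The small-$t$ (power-type) matching is routine, but controlling the modulus for $t\geq 1$ cannot be done with the trivial triangle-inequality bound, which only gives $\d^*\geq t-2$; it genuinely requires the structural input (embedding into an FDD space with a constant-one upper estimate, together with a quantitative, constant-preserving duality between the two moduli) rather than any soft rescaling argument, because the required constant in the power-type estimate, and not merely the exponent, must be controlled.
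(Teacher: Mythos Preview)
Your overall plan is the same as the paper's: renorm so that $\d^*_{(X^*,\eqnorma)}(t)\geq (1+t^r)^{1/r}-1$ for some $r>1$, compare with $\p_{\ell_q}(t)=(1+t^q)^{1/q}-1$, and invoke Theorem~\ref{TheoremWeakStarMP}. The ``in particular'' clause is handled exactly as you say.

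Where you diverge is in how you obtain the global modulus estimate. The paper cites the Knaust--Odell--Schlumprecht renorming theorem \cite[Corollary~5.3]{KOS99} in a form that already delivers the dual-side inequality directly: there exists $p\in(1,\infty)$ and an equivalent dual norm $\eqnorma$ on $X^*$ with $\d^*_{\eqnorma}(t)\geq(1+t^p)^{1/p}-1$ for every $t>0$. Then one simply picks any $q>p$, so that the comparison with $\p_{\ell_q}$ is \emph{strict} for all $t>0$, and both hypotheses of Theorem~\ref{TheoremWeakStarMP} follow at once. No passage through the smoothness modulus of $X$, no embedding into an FDD superspace, and no dualisation of moduli is needed.

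Your route instead builds a $p$-AUS norm on $X$ with $\p_{\eqnorma_X}(t)\leq(1+t^p)^{1/p}-1$ (via embedding into an FDD space with a constant-one upper $\ell_p$-estimate and using monotonicity of $\p$ under subspaces), and then tries to dualise this to the required lower bound on $\d^*_{X^*}$ with conjugate exponent $q$. As you yourself flag, that last step is the real obstacle: Proposition~\ref{duality} only gives power-type equivalence, not a constant-preserving Young-type duality of the form $\p_X(t)\leq(1+t^p)^{1/p}-1\Rightarrow\d^*_{X^*}(t)\geq(1+t^q)^{1/q}-1$, and nothing in the paper supplies such a statement. So your argument, while correct in outline, manufactures a nontrivial difficulty that the paper's direct citation of \cite{KOS99} entirely sidesteps. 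The payoff of the paper's approach is brevity; the payoff of yours would be a more self-contained argument, but only once that sharp duality is actually established.
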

	\begin{proof} 
		Let us start by recalling the fundamental renorming result for spaces with Szlenk index equal to $\omega$.  The result is due to H.~Knaust, E.~Odell and Th.~Schlumprecht \cite[Corollary 5.3]{KOS99}: if $X$ is a separable Banach space such that $\Sz(X) =\omega$,  then there exists $p\in (1,\infty)$ such that $X^*$ admits an equivalent $p$-AUC* norm $\eqnorma$, moreover $\d^*_{\eqnorma}(t) \geq (1+t^p)^{\frac{1}{p}}-1$. 
		Let us consider $q>p$. As we already mentioned in Example~\ref{ex:ausauc}, $\p_{\ell_q}(t) = (1+t^q)^{\frac{1}{q}}-1$. Thus for every $t>0$ we have that $\d^*_{\eqnorma}(t) > \p_{\ell_q}(t)$. The result now readily follows from Theorem~\ref{TheoremWeakStarMP}.
	\end{proof}

	Unlike Theorem~\ref{TheoremCompactWMP}, the condition on the moduli in Theorem~\ref{TheoremWeakStarMP} is stated for every $t>0$. This can be problematic in some concrete situations, even if $X$ is $p$-AUC and $Y$ is $q$-AUS with $p<q$.  Nevertheless, it is sometimes possible to define equivalent norms on $X$ and $Y$ for which the required inequality is satisfied for every $t>0$. Our next goal is to outline a setting where such renormings exist.
	
	\smallskip
	
	Recall that a sequence $\{E_n\}$ of finite-dimensional subspaces of a Banach space $X$ is called a \emph{finite-dimensional decomposition} (FDD in short) if every $x\in X$ has a unique representation of the form $x=\sum_{n=1}^\infty x_n$ with $x_n\in E_n$. An FDD determines a sequence $\{P_n\}$ of commuting projections, given by $P_n(\sum_{i=1}^\infty x_i)=\sum_{i=1}^n x_i$, in particular $E_n=(P_n-P_{n-1})X$. The sequence $\{(P_n^*-P_{n-1}^*)X^*\}$ is an FDD of its closed linear span in $X^*$; it is often denoted $\{E_n^*\}$ and called the dual FDD of $\{E_n\}$. In the case when the latter closed linear span is the whole $X^*$ (equivalently, if $\lim_n\norm{P_n^*x^*-x^*}=0$ for every $x^*\in X^*$), it is said that $\{E_n\}$ is a \emph{shrinking FDD}. Next an FDD $\{E_n\}$ is called \textit{boundedly complete} if for every $(x_n)_n \in \{E_n\}$ with $\sup_{n \in \N} \|  \sum_{i=1}^n  x_i\| < \infty $, the series $\sum_{i=1}^{\infty} x_i$ converges in $X$. Any space with a boundedly complete FDD is naturally a dual space and moreover the concepts of shrinking and boundedly complete are dual properties, in the sense that an FDD is shrinking if and only if its dual FDD is boundedly complete. We refer the reader to \cite{Biorthogonal} for more details about FDDs (we also recommend \cite[Section~3.2]{albiackalton} where Schauder bases are considered but the proofs easily extend to FDDs).

	\begin{theorem}\label{thm:renorming}  Let $X, Y$ be Banach spaces with shrinking FDDs. Assume that the norm of $X^*$ is $p$-AUC* and the norm of $Y$ is $q$-AUS for some $1<p\leq q<\infty$. Then there exist equivalent norms $\eqnorma_X$ on $X$ and $\eqnorma_Y$ on $Y$ such that the pair $((X^*,\eqnorma_{X}^*), (Y,\eqnorma_Y))$ has the weak*-to-weakMP.
	\end{theorem}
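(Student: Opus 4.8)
The plan is to reduce everything to Theorem~\ref{TheoremWeakStarMP} by engineering, after renorming, a favourable comparison of the asymptotic moduli. Concretely, I would construct $\eqnorma_X$ on $X$ and $\eqnorma_Y$ on $Y$ so that the dual FDD of $(X^*,\eqnorma_X^*)$ satisfies a lower $p$-estimate with constant $1$ and the FDD of $(Y,\eqnorma_Y)$ satisfies an upper $q$-estimate with constant $1$. Granting this, the principle recorded in Example~\ref{ex:ausauc}~(v), whose proof carries over verbatim from Schauder bases to FDDs, yields $\p_{\eqnorma_Y}(t)\leq (1+t^q)^{1/q}-1$, while the lower-estimate on the \emph{boundedly complete} dual FDD will give $\d^*_{\eqnorma_X^*}(t)\geq (1+t^p)^{1/p}-1$. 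Since $1<p\leq q<\infty$, the monotonicity of the $\ell_r$-norms in $r$ gives $(1+t^p)^{1/p}\geq (1+t^q)^{1/q}$, whence
\[
\d^*_{\eqnorma_X^*}(t)\geq (1+t^p)^{1/p}-1\geq (1+t^q)^{1/q}-1\geq \p_{\eqnorma_Y}(t)\qquad (t>0),
\]
and moreover $\d^*_{\eqnorma_X^*}(t)\geq (1+t^p)^{1/p}-1>t-1$ for every $t>0$ because $(1+t^p)^{1/p}>t$. Both hypotheses of Theorem~\ref{TheoremWeakStarMP} then hold, and assertion~(1) of that theorem gives the weak*-to-weakMP for $\big((X^*,\eqnorma_X^*),(Y,\eqnorma_Y)\big)$.

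It remains to produce the two constant-$1$ renormings, and this is where the FDD hypotheses enter. For $Y$, since $Y$ has a shrinking FDD and is $q$-AUS, the standard blocking-and-renorming technique (\cite{JLPS02,DKLR}) produces a blocking of the FDD and an equivalent norm $\eqnorma_Y$ under which the blocked FDD satisfies an upper $q$-estimate with constant $1$; Example~\ref{ex:ausauc}~(v) then delivers the bound on $\p_{\eqnorma_Y}$ as above.

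For $X$, I would exploit that a shrinking FDD of $X$ dualizes to a boundedly complete FDD $\{E_n^*\}$ of $X^*$ with $X^*=\cl{\lspan}\{E_n^*\}$. As $X^*$ is $p$-AUC*, the dual blocking technique yields, after blocking and an equivalent \emph{weak*-lower semicontinuous} renorming of $X^*$, a lower $p$-estimate with constant $1$ for $\{E_n^*\}$; weak*-lower semicontinuity guarantees that this norm is a dual norm, i.e. equals $\eqnorma_X^*$ for an equivalent norm $\eqnorma_X$ on $X$. To evaluate $\d^*_{\eqnorma_X^*}$ I would use the tail subspaces $W_N=\ker P_N^*=\cl{\lspan}\{E_n^* : n>N\}$, which are weak*-closed and of finite codimension. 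Given $x^*\in S_{X^*}$ and $\varepsilon>0$, pick $N$ with $\norm{x^*-P_N^* x^*}<\varepsilon$; for every $y^*\in S_{W_N}$ the vectors $P_N^* x^*$ and $y^*$ are disjointly supported, so the lower $p$-estimate gives $\norm{x^*+ty^*}\geq ((1-\varepsilon)^p+t^p)^{1/p}-\varepsilon$. Taking the supremum over the admissible subspaces and letting $\varepsilon\to 0$ yields $\d^*_{\eqnorma_X^*}(t)\geq (1+t^p)^{1/p}-1$, as required.

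The main obstacle is the renorming step itself, namely passing from the power-type asymptotic hypotheses to \emph{constant-one} upper/lower $\ell_p$-estimates on a blocking of the FDD; the modulus computations and the final appeal to Theorem~\ref{TheoremWeakStarMP} are comparatively routine. Two points demand care. First, the blocking: $q$-AUS (resp. $p$-AUC*) only produces block estimates after grouping the FDD into sufficiently long blocks, and one must verify that a blocking of a shrinking (resp. boundedly complete) FDD stays shrinking (resp. boundedly complete) and that the final norm remains equivalent to the original. The passage to constant exactly $1$ is then obtained by an infimal-convolution (Minkowski gauge) renorming, which is subadditive over block decompositions by construction. Second, on the dual side one must ensure this gauge renorming of $X^*$ is weak*-lower semicontinuous, so that it is genuinely a dual norm; bounded completeness of $\{E_n^*\}$ is precisely what makes the infimal-convolution construction weak*-lower semicontinuous.
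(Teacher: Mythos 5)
Your overall strategy is the paper's: block the FDDs so that power-type AUC*/AUS becomes lower/upper $\ell_p$-estimates, renorm to make the constants equal to $1$, deduce $\d^*_{\eqnorma_X^*}(t)\geq(1+t^p)^{1/p}-1$ and $\p_{\eqnorma_Y}(t)\leq(1+t^q)^{1/q}-1$, and feed these into Theorem~\ref{TheoremWeakStarMP} (your verification that $(1+t^p)^{1/p}-1>t-1$ disposes of the second hypothesis exactly as in the paper). The one genuine divergence is the treatment of $Y$: you propose to renorm $Y$ directly to get a constant-one upper $q$-estimate, whereas the paper works on the dual side throughout --- it applies the blocking lemma (Lemma~\ref{lemma:deltaestimate}) to the $q'$-AUC* dual norm of $Y^*$, renorms $Y^*$ with Prus's supremum norm $\norm{\cdot}_{(q')}$ to get a constant-one \emph{lower} $q'$-estimate there, and then transfers this to a constant-one upper $q$-estimate on the predual via Lemma~\ref{lemma:dualestimate}. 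The paper's detour buys something concrete: a supremum of weak*-continuous seminorms is automatically an equivalent dual norm with the desired superadditivity, so no separate verification is needed, and only one renorming device has to be analysed for both spaces.

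There is one point in your sketch that would not work as written. You say the constant-one estimates are obtained by an ``infimal-convolution (Minkowski gauge) renorming, which is subadditive over block decompositions,'' and that bounded completeness of $\{E_n^*\}$ is what makes it weak*-lower semicontinuous. An infimal convolution produces \emph{subadditivity}, i.e.\ upper estimates; it cannot deliver the lower $p$-estimate you need on $X^*$, and its weak*-lower semicontinuity is not a consequence of bounded completeness. The correct construction on the dual side is the opposite one: the Prus norm
\[
\norm{x^*}_{(p)}=\sup\Big\{\Big(\sum_{i=1}^k \norm{(P_{n_{i+1}}^*-P_{n_i}^*)x^*}^p\Big)^{1/p} : n_1<\dots<n_k\Big\},
\]
which is a supremum of weak*-lower semicontinuous functions, hence trivially a dual norm, and is superadditive over blocks by construction. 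Bounded completeness enters elsewhere, namely in Lemma~\ref{lemma:deltaestimate}, where Johnson's blocking lemma upgrades the skipped lower $p$-estimate produced by the AUC* hypothesis to a genuine lower $p$-estimate on a further blocking; that step, which you correctly identify as the main obstacle, is the actual content of the paper's Lemma~\ref{lemma:deltaestimate} and is not recovered by your gauge construction. With the supremum norm in place of the infimal convolution, the rest of your argument (the tail-subspace computation of $\d^*$, the comparison of moduli, and the appeal to Theorem~\ref{TheoremWeakStarMP}) goes through as in the paper.
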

	
	The proof of Theorem \ref{thm:renorming} follows from the relations between asymptotic uniform convexity and lower estimates, and asymptotic uniform smoothness and upper estimates (see \cite[Proposition 2.10]{JLPS02}). Given $p>1$, an FDD $\{E_n\}$ is said to satisfy an upper (lower, respectively) $p$-estimate provided there is a constant $C > 0$ so that for any sequence of consecutively supported vectors $x_1<\cdots<x_m$ in $X$,
	$\|\sum_{k=1}^m x_k\|\leq C(\sum_{k=1}^m \|x_k\|^p)^{1/p}$ ($\|\sum_{k=1}^m x_k\|\geq C^{-1}(\sum_{k=1}^m \|x_k\|^p)^{1/p}$,  respectively).   
	Recall that a \emph{blocking} of an FDD $\{E_n\}$ is a sequence of the form $\{E_{n_i}+\ldots+E_{n_{i+1}-1}\}$ with $1=n_1<n_2<\ldots$. Note that each blocking of $\{E_n\}$ corresponds to a subsequence of $\{P_n\}$. Thus, each blocking of a shrinking FDD is also shrinking. Moreover, each blocking of $\{(P_n^*-P_{n-1}^*)X^*\}$ is the dual of a blocking of $\{E_n\}$.

	\begin{lemma}\label{lemma:deltaestimate} Let $X$ be a Banach space with a shrinking FDD $\{E_n\}$.
		Assume that $\overline{\delta}^*_{X^*}$ has power type $p$. Then there is a blocking $\{G_n\}$ of $\{E_n\}$ such that the dual FDD $\{G_n^*\}$ satisfies a lower $p$-estimate. 	
	\end{lemma}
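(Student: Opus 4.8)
The plan is to reduce the statement to a single two-block lower estimate and then to build the blocking so that this estimate propagates along the dual FDD with a constant independent of the number of blocks. I would first record two consequences of $\{E_n\}$ being shrinking. Then $\{E_n^*\}=\{(P_n^*-P_{n-1}^*)X^*\}$ is an FDD of $X^*$, and for $x\in X$ and $y^*$ supported (in $\{E_n^*\}$) on $(N,\infty)$ we have $\duality{y^*,x}=\duality{y^*,(I-P_N)x}$, so $\abs{\duality{y^*,x}}\le\norm{y^*}\,\norm{(I-P_N)x}\to 0$. Consequently, any bounded block sequence of $\{E_n^*\}$ whose supports tend to infinity is weak$^*$-null; this is exactly what will let me feed far-out blocks into Proposition~\ref{as-sequences}.

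Next I would extract from the power-type hypothesis a uniform asymptotic estimate. Proposition~\ref{as-sequences} gives, for $x^*\neq 0$ and any weak$^*$-null net $(y^*_\alpha)$, $\liminf_\alpha\norm{x^*+y^*_\alpha}\ge\norm{x^*}\big(1+\overline{\delta}^*_{X^*}(\liminf_\alpha\norm{y^*_\alpha}/\norm{x^*})\big)$. Writing $t$ for the ratio and using $\overline{\delta}^*_{X^*}(t)\ge Ct^p$ on $[0,1]$ together with Bernoulli's inequality $(1+Ct^p)^p\ge 1+pCt^p$, the case $t\le 1$ yields $\liminf_\alpha\norm{x^*+y^*_\alpha}^p\ge\norm{x^*}^p+pC(\liminf_\alpha\norm{y^*_\alpha})^p$. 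For $t\ge 1$ a short case analysis, based on the monotonicity of $\overline{\delta}^*_{X^*}$ (so that $\overline{\delta}^*_{X^*}(t)\ge\overline{\delta}^*_{X^*}(1)\ge C$) together with the triangle bound $\norm{x^*+y^*}\ge\norm{y^*}-\norm{x^*}$, produces an inequality of the same shape. The outcome is a single constant $\lambda=\lambda(C,p)>0$ such that $\liminf_\alpha\norm{x^*+y^*_\alpha}^p\ge\norm{x^*}^p+\lambda(\liminf_\alpha\norm{y^*_\alpha})^p$ for every $x^*$ and every weak$^*$-null net.

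I would then manufacture the blocking inductively. Suppose boundaries $1=m_1<\dots<m_j$ have been chosen, so that $H_{j-1}=\lspan\{E_i^*:i<m_j\}$ is a fixed finite-dimensional space. Since $S_{H_{j-1}}$ is compact and normalized blocks supported arbitrarily far out are weak$^*$-null, a routine compactness-and-contradiction argument based on the estimate of the previous paragraph lets me choose $m_{j+1}>m_j$ so large that $\norm{u+v}^p\ge(1-\eta_j)\big(\norm{u}^p+\lambda\norm{v}^p\big)$ for every $u\in H_{j-1}$ and every $v$ supported on $[m_{j+1},\infty)$. Setting $G_i=\lspan\{E_n:m_i\le n<m_{i+1}\}$, I would finally take consecutively supported $x_1^*<\dots<x_m^*$ in $\{G_n^*\}$, peel off the last block (which starts at some boundary $m_a$, while $u=\sum_{k<m}x_k^*$ lies in $H_{a-1}$), and telescope to obtain $\norm{\sum_k x_k^*}^p\ge\prod_j(1-\eta_j)\,\lambda\sum_k\norm{x_k^*}^p$. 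Choosing $\sum_j\eta_j<\infty$ keeps $\prod_j(1-\eta_j)$ bounded below, which is precisely a lower $p$-estimate for $\{G_n^*\}$.

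The step I expect to be the crux is the inductive choice of $m_{j+1}$, and in particular the avoidance of circularity. The asymptotic estimate only applies when the perturbing vector $v$ is pushed far beyond a threshold dictated by the base vector $u$; in the telescoping, however, $u$ occupies every coordinate up to the start of the peeled block, so splitting between adjacent blocks looks self-referential. The resolution is to fix the threshold from the already-chosen finite-dimensional span $H_{j-1}$ \emph{before} deciding where the next block begins, which is legitimate precisely because $G_a$ is selected after all earlier blocks are in place. The accompanying bookkeeping—keeping the multiplicative errors $\eta_j$ summable so that the final constant does not deteriorate as the number of blocks grows—is routine but must be tracked carefully.
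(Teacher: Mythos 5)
Your reduction to a single two--block inequality and the compactness-and-contradiction choice of the boundaries follow the paper's argument closely (the paper splits into the cases $\norm{y^*}>4\norm{x^*}$ and $\norm{y^*}\le 4\norm{x^*}$, which is your $t\ge 1$ versus $t\le 1$ dichotomy, and extracts a single constant $M$ playing the role of your $\lambda$). The gap is in the telescoping. By construction, the inequality available at stage $j$ applies to $u\in H_{j-1}=\operatorname{span}\{E_i^*:i<m_j\}$ and $v$ supported on $[m_{j+1},\infty)$; the block $G_j=[m_j,m_{j+1})$ must be \emph{skipped}. When you peel off the last block $x_m^*\in G_a^*$, the remaining sum $u=\sum_{k<m}x_k^*$ may have a nonzero component in $G_{a-1}^*$, the block immediately preceding $G_a^*$. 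The only estimate whose range of $v$'s contains $G_a^*$ is the one at stage $j=a-1$, and it requires $u$ supported before $m_{a-1}$ --- it does not tolerate a component of $u$ in $G_{a-1}^*$. Your proposed fix (fixing the threshold from $H_{j-1}$ before choosing $m_{j+1}$) avoids circularity but delivers exactly, and only, a \emph{skipped} lower $p$-estimate for the blocking; it does not cover consecutively supported (adjacent) blocks, so the claimed inequality $\norm{\sum_k x_k^*}^p\ge c\sum_k\norm{x_k^*}^p$ does not follow from the telescoping as written.

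This is precisely the point where the paper invokes an additional tool: having produced a blocking $\{F_n^*\}$ with a skipped lower $p$-estimate, and observing that $\{F_n^*\}$ is boundedly complete (being a blocking of the dual of a shrinking FDD), it applies Johnson's blocking lemma (\cite{Johnson77}, see also \cite[Lemma 2.51]{Biorthogonal}) to obtain a \emph{further} blocking $\{G_n^*\}$ satisfying a genuine lower $p$-estimate. To complete your argument you must either cite that result or supply a separate device handling adjacent blocks; the multiplicative bookkeeping with $\sum_j\eta_j<\infty$ is not where the difficulty lies (indeed the paper works with a single constant $M$ throughout).
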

	
	\begin{proof}
		Take $C>0$ and $t_0>0$ such that $\overline{\delta}_{X^*}^*(t)\geq Ct^p$ for all $0<t\leq t_0$. First, note that we may assume $t_0\geq 4$. Indeed, if $t_0\leq t\leq 4$ then $\overline{\delta}^*_{X^*}(t)\geq \overline{\delta}^*_{X^*}(t_0)\geq C t_0^p \geq C' t^p$ where $C'=\frac{Ct_0^p}{4^p}$.	
		
		Now, let $0<M<\min\{C, 4^{-p}(3^p-1)\}$. We claim that for each $n\in \mathbb N$ there is $m>n$ such that 
		\[ \norm{x^*+y^*}^p \geq \norm{x^*}^p+M\norm{y^*}^p\]
		whenever $x^*\in \operatorname{span}\{E_i^*: i=1,\ldots, n\}$ and $y^*\in \overline{\operatorname{span}}\{E_i^*: i\geq m\}$. If $\norm{y^*}> 4\norm{x^*}$, then 
		\[ 
		\begin{split}
		\norm{x^*+y^*}^p &\geq \left|\norm{y^*}-\norm{x^*}\right|^p = \norm{y^*}^p \left|1-\frac{\norm{x^*}}{\norm{y^*}}\right|^p\geq \frac{3^p}{4^p}\norm{y^*}^p \\
		&\geq \norm{x^*}^p+ \frac{3^p-1}{4^p}\norm{y^*}^p 
		\end{split}
		\]
		so we may assume $\norm{y^*}\leq 4\norm{x^*}$. Moreover, by homogeneity, we may assume $x^*\in S_{X^*}$.
		
		Assume that the claim does not hold. Then there exists $n$ such that for each $m>n $ there is $x_m^*\in \operatorname{span}\{E_i: i=1,\ldots, n\}$ and $y_m^* \in \overline{\operatorname{span}}\{E_i: i\geq m\}$ such that  $\norm{x_m^*}=1$, $\norm{y_m^*}\leq 4$, and $\norm{x_m^*+y_m^*}^p <\norm{x_m^*}^p+M\norm{y_m^*}^p$. By extracting a subsequence we may assume $x_m^*\to x^*\in S_{X^*}$. Since $\{E_n\}$ is an FDD for $X$, the sequence $\{y_m^*\}_{m\geq n}$ is weak$^*$-null. Moreover, $\limsup\norm{y_m^*}\leq 4$. Thus,
		\[ \lim_{m\to\infty}\norm{x^*+y_m^*} \geq 1+\overline{\delta}^*_{X^*}(\limsup_m\norm{y_m^*})\geq 1+C\limsup_m\norm{y_m^*}^p.\]
		On the other hand, 
		\[
		\begin{split}
		\lim_{m\to\infty}\norm{x^*+y_m^*} &= \lim_{m\to\infty}\norm{x_m^*+y_m^*}\leq (1+M\limsup_m\norm{y_m^*}^p)^{1/p}\\
		&\leq 1+M\limsup_m\norm{y_m^*}^p.
		\end{split}
		\]
		From that, we got $C\leq M$, a contradiction. 
		
		It follows from the claim that there is an increasing sequence $1=n_0<n_1<n_2<...$ such if $x^*\in \operatorname{span}\{E_i^*: 1\leq i\leq n_i\}$ and $y^*\in \overline{\operatorname{span}}\{E_i^* :  i\geq n_{i+1}\}$ then $\norm{x^*+y^*}^p\geq \norm{x^*}^p+M\norm{y^*}^p$. It follows easily that the blocking $\{F_n^*\}$ of $\{E_n^*\}$ given by $F_i^* = \operatorname{span}\bigcup_{k=n_i}^{n_{i+1}-1} E_k^*$ satisfies a skipped lower $p$-estimate. Moreover, $\{F_n^*\}$ is boundedly complete since it is a blocking of a boundedly complete FDD. Finally, a result due to Johnson \cite{Johnson77} (see also \cite[Lemma 2.51]{Biorthogonal}) provides a further blocking $\{G_n^*\}$ of $\{F_n^*\}$ satisfying a lower $p$-estimate.
	\end{proof}
	
	It is well known that an FDD satisfies a lower estimate if and only if the dual FDD satisfies an upper estimate \cite[Fact~2.19]{Biorthogonal}. In fact, sometimes we can keep the constants, as the following lemma shows. 
	
	\begin{lemma}\label{lemma:dualestimate} Let $\{E_n\}$ be a shrinking FDD. Assume that the dual FDD $\{E_n^*\}$ satisfies a lower $p$-estimate with constant $1$. Then $\{E_n\}$ satisfies an upper $p'$-estimate with constant $1$, where $1/p+1/p'=1$. 
	\end{lemma}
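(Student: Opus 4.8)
The plan is to deduce the upper $p'$-estimate for $\{E_n\}$ from the lower $p$-estimate of the dual FDD by a straightforward duality argument, the only real work being the careful tracking of the constant $1$. I would fix consecutively supported vectors $x_1<\cdots<x_m$ in $X$, set $x=\sum_{k=1}^m x_k$, and aim at the bound $\norm{x}\leq\big(\sum_k\norm{x_k}^{p'}\big)^{1/p'}$. The natural way to reach $\norm{x}$ from above while producing vectors in $X^*$, where the lower estimate lives, is to norm $x$: by Hahn--Banach choose $x^*\in S_{X^*}$ with $x^*(x)=\norm{x}$.

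Next I would split $x^*$ along the dual FDD, filling in the gaps between the supports so that the relevant dual projection becomes an \emph{initial-segment} projection. Writing $A_k=\supp(x_k)$ for the disjoint increasing support intervals and $N=\max A_m$, let $B_1<\cdots<B_m$ be the consecutive intervals partitioning $[1,N]$ with $B_k=(\max A_{k-1},\max A_k]$, so that $A_k\subseteq B_k$, and set $y_k^*=(P_{\max B_k}^*-P_{\max B_{k-1}}^*)x^*$. Since $\supp(x_j)=A_j\subseteq B_j$ and $P_{B_j}$ acts as the identity (resp.\ zero) on $x_k$ when $j=k$ (resp.\ $j\neq k$), one gets $y_k^*(x_k)=x^*(x_k)$ and $y_j^*(x_k)=0$ for $j\neq k$; moreover the $y_k^*$ are consecutively supported on $\{E_n^*\}$ and $\sum_{k=1}^m y_k^*=P_N^* x^*$. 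Hence
\[ \norm{x}=x^*(x)=\sum_{k=1}^m y_k^*(x_k)\leq \sum_{k=1}^m\norm{y_k^*}\,\norm{x_k}, \]
and applying H\"older with exponents $p,p'$ followed by the lower $p$-estimate with constant $1$ for the consecutively supported $y_k^*$ gives
\[ \norm{x}\leq\Big(\sum_{k=1}^m\norm{y_k^*}^p\Big)^{1/p}\Big(\sum_{k=1}^m\norm{x_k}^{p'}\Big)^{1/p'}\leq\norm{\sum_{k=1}^m y_k^*}\,\Big(\sum_{k=1}^m\norm{x_k}^{p'}\Big)^{1/p'}. \]

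The one remaining point, and the step I expect to be the \textbf{main obstacle}, is to control $\norm{\sum_k y_k^*}=\norm{P_N^* x^*}$: a priori a dual \emph{block} projection need not be contractive, which is precisely why I filled the gaps so that the projection that appears is the initial-segment projection $P_N^*$ rather than a projection onto a union of intervals. Invoking monotonicity of the FDD (so $\norm{P_N}\leq 1$, whence $\norm{P_N^*}\leq 1$) yields $\norm{P_N^* x^*}\leq\norm{x^*}=1$, and the two displays combine to the desired $\norm{x}\leq\big(\sum_k\norm{x_k}^{p'}\big)^{1/p'}$. Thus the whole difficulty is the constant-one bookkeeping: it forces the estimate to be routed through an initial-segment projection and to use that the FDD is monotone, which is exactly the extra care needed to upgrade the classical ``duality up to constants'' of \cite[Fact~2.19]{Biorthogonal} to an isometric statement.
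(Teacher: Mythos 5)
Your proof is correct and is essentially the paper's argument: norm $x=\sum_k x_k$ by Hahn--Banach, block the norming functional along the dual FDD so that the blocks pair off with the $x_k$, and combine H\"older's inequality with the constant-one lower $p$-estimate. The only divergence is the handling of the leftover pieces: the paper keeps the \emph{full} decomposition $x^*=\sum_{i=1}^{\infty}x_i^*$ (gap and tail blocks included, paired with $x_i=0$), so the lower estimate is applied to blocks summing exactly to $x^*$ and no projection norm ever enters, whereas you truncate at $P_N^*$ and invoke monotonicity of the FDD, which is not among the stated hypotheses. This is not a genuine gap --- a constant-one lower $p$-estimate on the dual FDD of a shrinking FDD forces $\norm{P_N^*}\le 1$ (split a finitely supported $x^*$ into its head $P_N^*x^*$ and tail and apply the lower estimate to that pair, then use that such $x^*$ are dense in $X^*$) --- but you should derive this rather than assume it, and the paper's bookkeeping sidesteps the issue entirely.
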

	
	\begin{proof}
		Let $x_1,\ldots , x_n\in X$ be consecutively supported vectors such that $\| \sum_{i=1}^n x_i \| = 1$. Using Hahn--Banach theorem, we may consider $x^*\in S_{X^*}$ such that $\langle x^*, \sum_{i=1}^n x_i\rangle= 1$. Since $\{E_n\}$ is shrinking, $\{E_n^*\}$ is an FDD in $X^*$ so we may write $x^* = \sum_{i=1}^{\infty} x_i^*$
		with $\supp(x_i^*)=\supp(x_i)$ whenever $i \leq n$. For $i >n$, we let $x_i = 0$ in what follows:
		\begin{align*} 
		1 &= \langle x^*, \sum_{i=1}^n x_i\rangle = \sum_{i=1}^n x_i^*(x_i) =  \sum_{i=1}^{\infty} x_i^*(x_i) \leq \sum_{i=1}^\infty \norm{x_i^*}\norm{x_i} \\
		&\leq   \left(\sum_{i=1}^{\infty}\norm{x_i^*}^p\right)^{\frac{1}{p}}\left(\sum_{i=1}^{\infty} \norm{x_i}^{p'}\right)^{\frac{1}{p'}}
		\leq \Big\|\sum_{i=1}^{\infty} x_i^*  \Big\| \left(\sum_{i=1}^n \norm{x_i}^{p'}\right)^{\frac{1}{p'}} \\
		&\leq \left(\sum_{i=1}^n  \norm{x_i}^{p'}\right)^{\frac{1}{p'}}. 
		\end{align*}
	\end{proof}

	\begin{proof}[Proof of Theorem~\ref{thm:renorming}]
		By Lemma~\ref{lemma:deltaestimate} there is a blocking $(E_n)_n$ of the FDD in $X$ whose dual FDD satisfies a lower $p$-estimate. Now, consider the norm on $X^*$ given by
		\[\norm{x^*}_{(p)}=\sup\left\{\left(\sum_{i=1}^k \norm{(P_{n_{i+1}}^*-P_{n_i}^*)(x^*)}^p\right)^{1/p} : 1\leq n_1<n_2<\ldots<n_k\right\}\]
		where $\{P_n\}$ are the associated projections to $\{E_n\}$. This is the norm considered by Prus in \cite[Remark~3.2 and Lemma~3.3]{Prus87}, where it is proved that it is an equivalent norm on $X^*$ and that the FDD $\{E_n^*\}$ satisfies a lower $p$-estimate with constant $1$. Moreover, the function $\norm{\cdot}_{(p)}$ is weak*-lower semicontinuous since it is the supremum of weak*-lower semicontinuous functions, so $\norm{\cdot}_{(p)}$ is a dual equivalent norm on $X^*$. It follows that $\d^*_{(X^*, \norm{\cdot}_{(p)})}(t)\geq (1+t^p)^{1/p}-1$ for all $t>0$. 
		
		Now, we focus on the space $Y$. Since the norm is $q$-AUS, the dual norm is $q'$-AUC*, where $1/q+1/q'=1$. Lemma~4.3 provides a blocking $\{F_n\}$ of the FDD of $Y$ such that the dual FDD satisfies a lower $q'$-estimate in $Y^*$. The argument above provides a dual equivalent norm $\norm{\cdot}_{(q')}$ on $Y^*$ such that $\{F_n^*\}$ satisfies a lower $q'$-estimate with constant one. Then, $\{F_n\}$ satisfies an upper $q$-estimate in $Y$ endowed with the predual norm $\norm{\cdot}_{(q')_*}$ with the same constant (one) by Lemma~\ref{lemma:dualestimate}. Thus,  $\p_{(Y,\norm{\cdot}_{(q')_*})}(t)\leq (1+t^q)^{1/q}-1$ for all $t>0$. It follows that, given $t\geq 0$, either $\d^*_{(X^*, \norm{\cdot}_{(p)})}(t)>\p_{(Y,\norm{\cdot}_{(q')_*})}(t)$, or $p=q$ and $\d^*_{(X^*, \norm{\cdot}_{(p)})}(t)=\p_{(Y,\norm{\cdot}_{(q')_*})}(t)=(1+t^p)^{1/p}-1$, which is greater than $t-1$ for $t\geq 1$. Finally, the conclusion follows from Theorem~\ref{TheoremWeakStarMP}.   
	\end{proof}
	
	Note that we may avoid one of the renormings in Theorem~\ref{thm:renorming} in case that $\d_{X^*}^*(t)=(1+t^p)^{1/p}-1$ or $\p_{Y}(t)=(1+t^q)^{1/q}-1$. As a consequence, we get for instance that for every $1<p<\infty$ there is an equivalent norm $\eqnorma$ on the Tsirelson space $\mathrm{T}$ so that the pair $((\mathrm{T}, \eqnorma), \ell_p)$ has the WMP.

	\section{Application to some classical Banach spaces}
	\label{section-applications}

	\subsection{The case of \texorpdfstring{$\ell_1$}{l1} and Schur spaces}
	
	Recall that a Banach space $Y$ is said to have the Schur property if every weakly convergent sequence is also norm-convergent. Our first goal is to show that Schur spaces are the best range spaces for the WMP. In the sequel, we will need to following well-known fact, its easy proof is left to the reader.
	
	\begin{lemma} \label{lemmaNA}
		Let $T :X \to Y$ be a bounded operator. If $T$ attains it norm then $T^* : Y^* \to X^*$ attains its norm. Conversely, if $T^*$ attains its norm and if $X$ is reflexive, then $T$ also attains its norm. 
	\end{lemma}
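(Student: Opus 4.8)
The plan is to prove the two implications separately: the first by a direct Hahn--Banach argument, and the second by applying the first implication to $T^*$ together with reflexivity. Throughout I would use the elementary identities $\norm{T^*}=\norm{T}$ and $\norm{T^{**}}=\norm{T^*}=\norm{T}$, and the fact that the canonical embeddings are isometries.

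First I would treat the forward implication. Suppose $T$ attains its norm at some $x\in S_X$, so that $\norm{Tx}=\norm{T}$. The idea is to choose a norming functional for $Tx$ and then test $T^*$ against it. By the Hahn--Banach theorem pick $y^*\in S_{Y^*}$ with $y^*(Tx)=\norm{Tx}=\norm{T}$. Evaluating $T^*y^*$ at $x$ gives
\[
\norm{T^*y^*}\geq \abs{(T^*y^*)(x)}=\abs{y^*(Tx)}=\norm{T}=\norm{T^*},
\]
while the reverse inequality $\norm{T^*y^*}\leq\norm{T^*}\norm{y^*}=\norm{T^*}$ is automatic. Hence $\norm{T^*y^*}=\norm{T^*}$, so $T^*$ attains its norm at $y^*$.

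For the converse, assume $T^*$ attains its norm and that $X$ is reflexive. The plan is to apply the forward implication to the operator $T^*\colon Y^*\to X^*$: since $T^*$ attains its norm, its adjoint $T^{**}\colon X^{**}\to Y^{**}$ attains its norm, say at some $\xi\in S_{X^{**}}$. I would then transport this maximizer back to $X$ using the canonical embeddings $J_X\colon X\to X^{**}$ and $J_Y\colon Y\to Y^{**}$ together with the standard intertwining identity $T^{**}J_X=J_YT$. Because $X$ is reflexive, $J_X$ is a surjective isometry, so $\xi=J_Xx$ for a unique $x\in X$ with $\norm{x}=\norm{\xi}=1$. Consequently
\[
\norm{Tx}=\norm{J_YTx}=\norm{T^{**}J_Xx}=\norm{T^{**}\xi}=\norm{T^{**}}=\norm{T},
\]
so $T$ attains its norm at $x$.

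The argument is entirely routine, which is why the authors leave it to the reader. The only point requiring a little care is the converse direction, where one must correctly transport the norm-maximizer $\xi\in X^{**}$ back into $X$; this is exactly where reflexivity (surjectivity of $J_X$) and the relation $T^{**}J_X=J_YT$ are used, and where the hypothesis cannot be dropped in general. Everything else reduces to the norm identities for adjoints and the isometric nature of the canonical embeddings.
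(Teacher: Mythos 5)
Your proof is correct; the paper explicitly leaves this lemma's proof to the reader, and your argument (Hahn--Banach norming functional for the forward direction, then the forward direction applied to $T^*$ plus the identity $T^{**}J_X=J_YT$ and surjectivity of $J_X$ for the converse) is a clean, standard way to fill it in. A marginally shorter route for the converse is to note that if $T^*$ attains its norm at $y^*\in S_{Y^*}$, then the functional $T^*y^*\in X^*$ attains its norm on $B_X$ by reflexivity, yielding $x\in S_X$ with $y^*(Tx)=\norm{T}$; but your version is equally valid.
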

	
	\begin{proposition} \label{prop-Schur-range}
		If $X$ is reflexive and $Y$ is Schur, then every operator $T\colon X\to Y$ is compact. Therefore $(X,Y)$ has the WMP and $(Y^*, X^*)$ has the weak$^*$-to-weak$^*$MP.
	\end{proposition}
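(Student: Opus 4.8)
The plan is to prove the compactness statement first and then deduce both maximizing properties as essentially immediate consequences, since I will in fact show that \emph{every} operator in question attains its norm. For compactness, first I would take a bounded sequence $(x_n)_n$ in $X$. Reflexivity of $X$ makes $B_X$ weakly compact, so by the Eberlein--\v{S}mulian theorem some subsequence $(x_{n_k})_k$ converges weakly to some $x\in X$. Since any bounded linear operator is weak-to-weak continuous, $Tx_{n_k}\wconv Tx$; and because $Y$ has the Schur property, this weak convergence is actually norm convergence. Hence every bounded sequence has an image admitting a norm-convergent subsequence, i.e.\ $T$ is compact.

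For the WMP of $(X,Y)$, I claim every $T\in\mathcal{L}(X,Y)$ attains its norm, which makes the defining implication of the WMP hold trivially (the conclusion holds for every $T$ irrespective of the hypothesis). Indeed, take a normalized maximizing sequence $(x_n)$; by reflexivity pass to a weakly convergent subsequence $x_{n_k}\wconv x$ with $\norm{x}\le 1$, and by compactness upgrade the weak convergence $Tx_{n_k}\wconv Tx$ to norm convergence along a further subsequence, obtaining $\norm{Tx}=\lim_k\norm{Tx_{n_k}}=\norm{T}$. If $\norm{T}=0$ this is trivial; otherwise $x\neq 0$ and $\norm{T}=\norm{Tx}\le\norm{T}\,\norm{x}\le\norm{T}$ forces $\norm{x}=1$, so $T$ attains its norm at $x$. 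Equivalently, one may view reflexive $X$ as the dual of $X^*$, where the weak and weak$^*$ topologies coincide, and apply Lemma~\ref{CompactAttainment} with $\tau_Y$ the weak topology of $Y$.

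For the weak$^*$-to-weak$^*$MP of $(Y^*,X^*)$, I will use that a weak$^*$-to-weak$^*$ continuous operator $S\colon Y^*\to X^*$ is automatically an adjoint. Since $X$ is reflexive, $X^*$ has predual $X$ with $\sigma(X^*,X)$ its weak$^*$ topology, and $Y^*$ has predual $Y$; for each $x\in X$ the functional $y^*\mapsto\langle Sy^*,x\rangle$ is weak$^*$ continuous on $Y^*$, hence coincides with evaluation at some $Tx\in Y$, and this defines $T\in\mathcal{L}(X,Y)$ with $S=T^*$. By the previous two paragraphs $T$ is compact and norm-attaining, so Lemma~\ref{lemmaNA} yields that $S=T^*$ attains its norm. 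As this holds for every weak$^*$-to-weak$^*$ continuous $S$, the pair $(Y^*,X^*)$ has the weak$^*$-to-weak$^*$MP.

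There is no deep obstacle here: the substantive content is that a reflexive domain together with a Schur range forces compactness, which rests only on Eberlein--\v{S}mulian and the Schur property. The only points requiring a little care are the identification $S=T^*$ of weak$^*$-to-weak$^*$ continuous operators with adjoints and the verification that the hypotheses of Lemma~\ref{lemmaNA} (or, alternatively, Lemma~\ref{CompactAttainment}) are matched correctly; everything else is bookkeeping.
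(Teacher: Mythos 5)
Your proof is correct and follows essentially the same route as the paper's: compactness via weak compactness of $B_X$ (Eberlein--\v{S}mulian) plus the Schur property, norm attainment of compact operators on reflexive domains, and Lemma~\ref{lemmaNA} for the adjoint. The only difference is that you spell out the (standard, and implicitly used by the paper) identification of weak$^*$-to-weak$^*$ continuous operators $Y^*\to X^*$ with adjoints $T^*$, which is a worthwhile detail but not a change of method.
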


	\begin{proof}
		The first part of the statement can be obtained as a particular case of each of the following facts:
		\begin{itemize}
			\item Every weakly precompact operator taking values in a Schur space is compact (obvious). Recall that an operator $T\colon X \to Y$ is called weakly precompact if $T(B_X)$ is weakly precompact, i.e., every sequence in $T(B_X)$ admits a weakly Cauchy subsequence.
			\item Every completely continuous operator defined on a Banach space not containing $\ell_1$ is compact (apply Rosenthal’s $\ell_1$-theorem). Recall that an operator $T\colon X \to Y$ is called completely continuous if it is weak-to-norm sequentially continuous.
		\end{itemize}
		The second part of the statement follows from the fact that any compact operator on a reflexive space attains its norm and Lemma~\ref{lemmaNA}.
	\end{proof}

	The last proposition allows us to deduce for instance that the pair  $(\ell_2, \ell_1)$ has the WMP while $(\ell_\infty, \ell_2)$ has the weak$^*$-to-weak$^*$MP.
	Next, it is well known that $\ell_1 \equiv c_0^*$ is weak$^*$ asymptotically uniformly convex with modulus $\d_{\ell_1}^*(t)=t$, and the same is true for $Z^*$ instead of $\ell_1$ whenever $Z$ is a subspace of $c_0$. Consequently,
	the next result is a direct consequence of Theorem~\ref{TheoremWeakStarMP}. Let us mention that if a dual Banach space $X \equiv Z^*$ has modulus $\d_{X}^*(t)=t$,
	then $Z$ is asymptotically uniformly flat (that is there exists $t_0>0$ such that for every $t<t_0$, $\p_Z(t) = 0$). 
	This is actually equivalent to the fact that $Z$ is isomorphic to a subspace of $c_0$ (see \cite[Theorem~2.9]{JLPS02}).
	\begin{corollary} \label{Cor-l1-domain}
		If $Z$ is a closed subspace of $c_0$,
		then the pair $(Z^{*},Y)$ has the W$^*$MP for every Banach space $Y$. 
		In particular $(\ell_1,Y)$ has the W$^*$MP for every Banach space $Y$ \emph{(with the weak$^*$ topology given by $c_0$)}.
	\end{corollary}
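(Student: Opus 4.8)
The plan is to obtain the statement as an immediate application of Theorem~\ref{TheoremWeakStarMP}~(3). The only substantial ingredient is the computation of the weak$^*$ asymptotic uniform convexity modulus of the dual, namely the identity $\overline{\delta}^*_{Z^*}(t)=t$ for every $t>0$; granting this, the rest is a formal check. Indeed, set $X=Z$ in Theorem~\ref{TheoremWeakStarMP}. For any Banach space $Y$ the function $\overline{\rho}_Y$ is $1$-Lipschitz with $\overline{\rho}_Y(0)=0$ (see the remark following Lemma~\ref{Orlicz-Kalton}), so $\overline{\rho}_Y(t)\le t=\overline{\delta}^*_{Z^*}(t)$ for all $t>0$, and the first hypothesis of the theorem holds. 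Likewise $\overline{\delta}^*_{Z^*}(t)=t>t-1$ for all $t\ge 1$, so the second hypothesis holds as well. Since the extra assumption of item (3), $\overline{\delta}^*_{Z^*}(t)=t$, is precisely what we are granting, part (3) yields that $(Z^*,Y)$ has the W$^*$MP for every Banach space $Y$. The final assertion is the special case $Z=c_0$, where $Z^*=\ell_1$ and the relevant weak$^*$ topology is $\sigma(\ell_1,c_0)$.

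Thus everything comes down to the modulus identity $\overline{\delta}^*_{Z^*}(t)=t$, which is the main (and really the only) obstacle. The upper estimate $\overline{\delta}^*_{X^*}(t)\le t$ holds in any dual space, straight from $\|x^*+ty^*\|\le\|x^*\|+t\|y^*\|=1+t$ for $x^*,y^*\in S_{X^*}$; so only the lower estimate needs an argument, and here I would exploit that $Z$ sits inside $c_0$. First I would compute the asymptotic uniform smoothness modulus of the predual: given $z\in S_Z\subseteq c_0$ and $\varepsilon>0$, pick $N$ with $|z_i|\le\varepsilon$ for $i>N$ and take the finite-codimensional subspace $V=Z\cap\{x:x_i=0\text{ for }i\le N\}$; then $\|z+ty\|_\infty\le\max(1,\varepsilon+t)$ for every $y\in S_V$, which gives $\overline{\rho}_Z(t)\le\max\{0,t-1\}$, and since the reverse inequality is the general lower bound valid in every infinite-dimensional space, $\overline{\rho}_Z(t)=\max\{0,t-1\}$. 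I would then transfer this to the dual modulus. One route is the sharp duality between asymptotic uniform smoothness of $Z$ and weak$^*$ asymptotic uniform convexity of $Z^*$ (Proposition~\ref{duality} together with the quantitative statement in \cite{DKLR}), whose endpoint case turns the flat modulus $\max\{0,t-1\}$ into $\overline{\delta}^*_{Z^*}(t)=t$. A second, self-contained route is to verify the lower estimate directly by a gliding-hump/disjointification argument: any weak$^*$-null net in $Z^*$ can be reduced to pieces asymptotically ``disjointly supported'' from a fixed $z^*\in S_{Z^*}$, yielding $\liminf_\alpha\|z^*+w_\alpha\|\ge\|z^*\|+\liminf_\alpha\|w_\alpha\|$, which is exactly $\overline{\delta}^*_{Z^*}(t)\ge t$.

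This modulus identity is in fact the well-known fact already recorded in the discussion preceding the statement ($\overline{\delta}^*_{\ell_1}(t)=t$, and its analogue for subspaces of $c_0$), so in the final write-up it may simply be quoted rather than reproved; the case of finite-dimensional $Z$ is anyway trivial, as then $Z^*$ is finite-dimensional and every maximizing sequence has a norm-convergent subsequence. With the identity in hand, the corollary is, as announced, a one-line consequence of Theorem~\ref{TheoremWeakStarMP}~(3).
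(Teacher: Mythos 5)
Your proposal is correct and follows exactly the paper's route: the paper also deduces the corollary directly from Theorem~\ref{TheoremWeakStarMP}~(3) after recording (as a known fact, without proof) that $\overline{\delta}^*_{Z^*}(t)=t$ for every closed subspace $Z$ of $c_0$. The only difference is that you additionally sketch a verification of this modulus identity, which the paper simply quotes.
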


	However, the previous result is not true in general if one considers another weak$^*$ topology on $\ell_1$, or if one replaces $\ell_1$ by any dual space with the Schur property. This is shown by the next examples, we thank Gilles Lancien and Antonin Proch\'azka for presenting the first one to us. Notice that this proves in particular that the W$^*$MP is not an isomorphic property, in the sense that there are isomorphic spaces $X_1$, $X_2$ so that only one of the pairs $(X_i^*, Y)$ has the W*MP.

	\begin{example}\label{ex:failsw*w*MP}
		Consider $T\colon \mathcal{C}[0,\omega]^* \to c_0^*$ such that $T(e_n) = (1-\frac{1}{n})e_n$ and $T(e_{\omega}) = 0$. Of course $c_0^* \equiv \ell_1$ while $ \mathcal{C}[0,\omega]^* \equiv \ell_1([0,\omega]) \equiv \ell_1$. It is readily seen that $\|T\| = 1$ and that $T$ does not attain its norm. Now the sequence $(e_n)_n \in \mathcal{C}[0,\omega]^*$ is a maximizing sequence for $T$, but this sequence is not weak$^*$ null since it converges to $e_\omega$. Thus the pair $(\mathcal{C}[0,\omega]^* , c_0^*)$ fails the W$^*$MP. In fact, $T$ is the dual operator of $S\colon c_0 \to \mathcal{C}[0,\omega]$ given by $Sx(n)=(1-\frac{1}{n})x_n$ and $Sx(\omega) = 0$ for every $x=(x_n)_n \in c_0$. Therefore the pair $(\mathcal{C}[0,\omega]^* , c_0^*)$ even fails the weak$^*$-to-weak$^*$MP.
	\end{example}
	
	A very similar operator $T\colon \mathcal{C}[0,\omega]^* \to \R$, defined by $T(e_n) = (1-\frac{1}{n})$ and $T(e_{\omega}) = 0$, actually yields that the pair $(\mathcal{C}[0,\omega]^* , \R)$ fails the W$^*$MP.  We choose to present another example of a different nature.

	\begin{example}  \label{Example1-Schur}
		There exists a dual space $Z=X^*$ with the Schur property such that the pair $(X^*, \R)$ fails the W$^*$MP. In fact $Z$ is going to be the Lipschitz-free space $\F(M)$ where $M = \{0 \}\cup \{x_n  \; : \;  n \in  \N \} \subset c_0$ is the pointed metric space given by $x_1 = 2e_1$, and $x_n = e_1+(1+\frac{1}{n})e_n$ for $n \geq 2$, where $(e_n)$ is the canonical basis of $c_0$. Then it is known \cite{Kalton04} that $\F(M)$ has the Schur property as $M$ is uniformly discrete and bounded. Indeed, the sequence $(\delta(x_n))$ is a Schauder basis for $\F(M)$ which is equivalent to the standard $\ell_1$-basis. Therefore $\F(M)$ is actually isomorphic to $\ell_1$ (see \cite[Proposition 4.4]{Kalton04} for more details). It is also known \cite[Example~5.6]{GPPR_2018} that $\F(M)\equiv X^*$ where
		\[ X = \{f \in \mathrm{Lip}_0(M) \; : \; \lim\limits_{n \to \infty} f(x_n) = \frac{1}{2} f(x_1)\}\]
		is isomorphic to $c_0$. It is readily seen that the sequence $((1+\frac{1}{n})^{-1}\delta(x_n))_n$ weak$^*$ converges to $\frac{1}{2}\delta(x_1) \neq 0$. Now let $f \colon M \to \R$ be the Lipschitz map defined by $f(0)=f(x_1)=0$ and $f(x_n)=1$ for every $n \geq 2$. It is obvious that $\|f\|_L = \lim\limits_{n \to \infty} \frac{f(x_n) -f(0)}{d(x_n,0)} = 1$. Now using the linearization property of free spaces, we may consider the bounded operator $\overline{f} \colon \F(M) \to \R$ such that $\overline{f}(\delta(x_n)) = f(x_n)$. Observe that $((1+\frac{1}{n})^{-1}\delta(x_n))_n$  is a maximizing sequence which is not weak$^*$-null. To finish, we just need to show that $\overline{f}$ does not attain its norm. Assume that $\overline{f}(\mu)=1$ for $\mu=\sum_{n=1}^\infty a_n \delta(x_n)\in S_{\mathcal F(M)}$. First note that 
		\[ 1=\overline{f}(\mu) = \sum_{n=2}^\infty a_n.\]
		Let $A=\{n\geq 2:  a_n>0\}$ and consider the one-Lipschitz function $g\colon M\to\mathbb R$ given by $g(x_n)=1$ if $n\in A$ and $g(x_n)=0$ otherwise. By evaluating the linear extension of $g$ to $\mathcal F(M)$ at $\mu$ we get $\sum_{n\in A} a_n\leq 1$. It follows that $a_n\geq 0$ for all $n\geq 2$. 
		\medskip
		
		Finally, consider the one-Lipschitz function $j\colon M\to\mathbb R$ given by $j(0)=j(x_1)=0$ and $j(x_n)=1+1/n$. Then, 
		\begin{align*}
		1&=\norm{\mu} \geq \<\overline{j}, \mu\> 
		= \sum_{n=2}^\infty a_n\left(1+\frac{1}{n}\right) = 1+ \sum_{n=2}^\infty \frac{a_n}{n}.
		\end{align*}
		Thus, $a_n=0$ for all $n\geq 2$. Then $\overline{f}(\mu)=0$, a contradiction. 
	\end{example}
	
	Notice that the Banach space $\mathcal F(M)$ in the previous example is not AUC*: indeed $\frac{1}{2}\delta(x_1)$ is an extreme point of $B_{\mathcal F(M)}$ \cite[Theorem~3.2]{AP20} but not a preserved extreme point \cite[Theorem~4.1]{AG19}, in particular it does not belong to slices of small diameter.

	\subsection{Dunford--Pettis spaces}

	We recall that a Banach space $Y$ has the Dunford--Pettis property if and only if for every sequence $(y_n)_{n}$ in $Y$ converging weakly to $0$ and every sequence $(y_n^*)_n$ in $Y^*$ converging weakly to 0, the sequence of scalars $(y_n^*(y_n))_n$ converges to 0. 
	It is well-known and easy to see that the weak convergence of $(y_n)_n$ (or of $(y_n^*)_n$) can be changed by weakly Cauchy convergence in this definition (see \cite{DiestelDP} e.g.).
	Classical spaces with the Dunford--Pettis property include Schur spaces, $c_0$, $\mathcal{C}(K)$ spaces (where $K$ is a compact Hausdorff space) and $L_1(\mu)$ (where $\mu$ is a $\sigma$-finite measure), see e.g.~\cite{DiestelDP}. However reflexive (infinite dimensional) spaces never have it.
	
	One goal of the section is to take advantage of the Dunford--Pettis property to analyse the possible relationships between $(X,Y)$ having the WMP and $(Y^*,X^*)$ having the weak$^*$-to-weak$^*$MP. Let us begin with the next observation.
	\begin{lemma} \label{LemmaMaxSeqDUAL}
		If $X$ is reflexive, $Y$ has the Dunford--Pettis property and $T\colon X \to Y$ is a nonzero operator, then every  maximizing sequence for $T^*\colon Y^* \to X^*$ is non-weakly null.\\
		Moreover, if $T^*$ admits a weakly Cauchy maximizing sequence then both $T$ and $T^*$ attain their norm.
	\end{lemma}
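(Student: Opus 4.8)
The plan is to run both assertions through one mechanism. The starting observation is that for each $y^*\in S_{Y^*}$ the number $\norm{T^*y^*}_{X^*}=\sup_{x\in B_X}\langle y^*,Tx\rangle$ is \emph{attained}: since $X$ is reflexive, $B_X$ is weakly compact, and $x\mapsto\langle y^*,Tx\rangle=\langle T^*y^*,x\rangle$ is weakly continuous. So, given a maximizing sequence $(y_n^*)\subset S_{Y^*}$ for $T^*$, I would first pick $x_n\in B_X$ with $\langle y_n^*,Tx_n\rangle=\norm{T^*y_n^*}\to\norm{T}$, and then, using reflexivity of $X$ once more, pass to a subsequence with $x_n\wconv x\in B_X$; consequently $Tx_n-Tx\wconv 0$ in $Y$ because $T$ is weak-to-weak continuous. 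The common device in both parts is the splitting $\langle y_n^*,Tx_n\rangle=\langle y_n^*,Tx\rangle+\langle y_n^*,Tx_n-Tx\rangle$, where the cross term is controlled by the Dunford--Pettis property.

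For the first assertion I argue by contradiction: suppose $(y_n^*)$ were weakly null. Then the first summand $\langle y_n^*,Tx\rangle$ tends to $0$ because $Tx$ is a fixed vector, while in the second summand both $(y_n^*)$ and $(Tx_n-Tx)$ are weakly null, so the Dunford--Pettis property forces $\langle y_n^*,Tx_n-Tx\rangle\to 0$. Hence $\norm{T}=\lim_n\langle y_n^*,Tx_n\rangle=0$, contradicting $T\neq 0$. This proves every maximizing sequence for $T^*$ is non-weakly null.

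For the second assertion, let $(y_n^*)$ be a \emph{weakly Cauchy} maximizing sequence. Being bounded and weakly Cauchy it is weak$^*$-Cauchy, hence weak$^*$-convergent to some $y^*\in B_{Y^*}$ (the functional $y\mapsto\lim_n\langle y,y_n^*\rangle$ is well defined with norm $\le 1$), so $y_n^*\wsconv y^*$. Now the first summand converges to $\langle y^*,Tx\rangle$ (weak$^*$ convergence tested on the fixed vector $Tx$), and in the second summand $(Tx_n-Tx)$ is weakly null while $(y_n^*)$ is weakly Cauchy, so the weakly-Cauchy form of the Dunford--Pettis property again gives $\langle y_n^*,Tx_n-Tx\rangle\to 0$. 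Therefore $\langle T^*y^*,x\rangle=\langle y^*,Tx\rangle=\norm{T}$. Since $\langle T^*y^*,x\rangle\le\norm{T^*y^*}\,\norm{x}\le\norm{T^*}\,\norm{y^*}\le\norm{T}$, the whole chain collapses to equalities, forcing $\norm{y^*}=\norm{x}=1$ and $\norm{T^*y^*}=\norm{T^*}=\norm{T}$; thus $T^*$ attains its norm at $y^*$, and then $T$ attains its norm (directly at $x$, or via Lemma~\ref{lemmaNA} using that $X$ is reflexive).

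The selection of $x_n$ and the extraction of the weak$^*$ limit $y^*$ are routine; the step I expect to require the most care is the \emph{correct} invocation of the Dunford--Pettis property — in particular its weakly-Cauchy reformulation — to annihilate the cross term $\langle y_n^*,Tx_n-Tx\rangle$. This is precisely what rules out a loss of norm at infinity: without it, weak lower semicontinuity would only yield $\norm{T^*y^*}\le\norm{T}$, which is not enough to conclude attainment. I would also verify the harmless point that passing to a subsequence of $(x_n)$ does not affect the weakly Cauchy and weak$^*$-convergent behaviour of $(y_n^*)$, which is immediate since every subsequence of a weakly Cauchy sequence inherits the same limits.
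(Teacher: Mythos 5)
Your proof is correct and follows essentially the same route as the paper: choose $x_n$ (nearly) norming $T^*y_n^*$, extract a weak limit $x$ of $(x_n)$ by reflexivity, split $\langle y_n^*,Tx_n\rangle=\langle y_n^*,Tx\rangle+\langle y_n^*,Tx_n-Tx\rangle$, and kill the cross term with the Dunford--Pettis property (in its weakly Cauchy form for the second assertion). The only cosmetic difference is in the conclusion of the second part: the paper shows $T$ attains its norm at $x$ and then invokes Lemma~\ref{lemmaNA} for $T^*$, whereas you identify the weak$^*$ limit $y^*$ of $(y_n^*)$ and exhibit it as a norming functional for $T^*$ directly — both are valid.
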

	
	\begin{proof}
		Let us prove the first statement. Let $(y_n^*)_n \subset S_{Y^*}$ be a maximizing sequence for $T^*$. We define $(x_n)_n \subset S_{X}$ so that $T^*y_n^*(x_n) > \|T^*y_n^*\| - \frac{1}{n}$. It is easy to see that 
		\[\|Tx_n\| \geq |y_n^*(Tx_n)| = T^*y_n^*(x_n) > \|T^*y_n^*\| - \frac{1}{n},\] 
		and taking the limit $n \to \infty$ yields $\lim\limits_{n \to \infty} \|Tx_n\| \geq \|T^*\|$. Since $\|T^*\| = \|T\|$ we deduce that $(x_n)_n$ is a maximizing sequence for $T$.
		
		By weak-compactness, we may extract a subsequence of $(x_n)_n$ (still denoted the same way) which weakly converges to some $x$. Now as $T$ is weakly continuous, we have that $(T x_n)_n$ weakly converges to $Tx$. We claim that $(y_n^*)_n$ is not weakly null. Indeed, aiming for a contradiction, assume that $(y_n^*)_n$ is weakly null. As $Y$ has the Dunford--Pettis property, we have that $\lim\limits_{n \to \infty} y_n^*(Tx_n) =\lim\limits_{n \to \infty} y_n^*(Tx_n-Tx) = 0$, contraducting  our definition of $(y_n^*)_n$ and $(x_n)_n$.  
		\smallskip
		
		Finally, if $(y_n^*)_n$ is a weakly Cauchy maximizing sequence for $T^*$, we may build an associated maximizing sequence $(x_n)_n$ for $T$ as described above. Up to extracting a subsequence, we may assume that $(x_n)_n$ is weakly convergent to some $x \in X$. Of course, $\|x\| \leq 1$ by lower semi-continuity of the norm. Next, since $Y$ has the Dunford--Pettis property and $T$ is weakly continuous, $\lim\limits_n \langle y_n^* , Tx_n -Tx \rangle  = 0$. By construction we know that  $\lim\limits_n \langle y_n^* , Tx_n \rangle  = \|T\|$, which in turn implies that $\lim\limits_n \langle y_n^* , Tx \rangle  = \|T\|$, and so we obtain that $\|Tx\| \geq \|T\|$. Consequently $\|x\|=1$ and $T$ attains its norm at $x$. The fact that $T^*$ also attains its norm follows from Lemma~\ref{lemmaNA}.
	\end{proof}
	
	\begin{remark}
		By Rosenthal's $\ell_1$ theorem, 
		if $T^*$ does not admit a weakly Cauchy maximizing sequence (as it is assumed in the second part of Lemma~\ref{LemmaMaxSeqDUAL}), then every maximizing sequence for $T^*$ has a subsequence equivalent to the $\ell_1$ canonical basis. Let us point out that $Y^*$ contains a copy of $\ell_1$ whenever $Y$ is infinite-dimensional and has the Dunford--Pettis property (see \cite[Corollary~7]{Castillo} e.g.). 
	\end{remark}

	\begin{remark}
		Assume this time that $X$ has the Dunford--Pettis property while $Y$ is reflexive. Similarly as above, one can show that every bounded operator $T\colon X \to Y$ admits a non weakly-null maximizing sequence. However, since $X$ is non reflexive, there is no chance that the pair $(X,Y)$ enjoys the WMP.
	\end{remark}

	We recall that a Banach space $Y$ has the weak$^*$-Dunford--Pettis property if for all weakly null sequences $(y_n)_n$ in $Y$ and all weak$^*$ null sequences $(y_n^*)_n$ in $Y^*$, the sequence $(y_n^*(y_n))_n$  converges to 0. Let us provide some results from \cite{Jaramillo2000} concerning this property. It is clear that the weak$^*$-Dunford--Pettis property implies the classical Dunford--Pettis property. For instance $\ell_1$ (and any Schur space), $\ell_\infty$ and $\ell_1 \oplus \ell_\infty$ have this property. Moreover, if $Y$ has the weak$^*$-Dunford--Pettis property and if the unit ball of $Y^*$ is weak$^*$-sequentially compact (for instance when $Y$ is separable), then $Y$ is a Schur space.
	A typical example of a space having the Dunford--Pettis property but lacking the weak$^*$-Dunford--Pettis property is $c_0$. 	The following result is a modified version of Lemma~\ref{LemmaMaxSeqDUAL}. 
	\begin{lemma} \label{LemmaMaxSeqDUALstar}
		If $X$ is reflexive, $Y$ has the weak$^*$-Dunford--Pettis property and $T\colon X \to Y$ is a nonzero operator, then every maximizing sequence for $T^* \colon Y^* \to X^*$ is non-weak$^*$ null.\\
		If moreover the unit ball of $Y^*$ is weak$^*$-sequentially compact (in particular Y is a Schur space) then every maximizing sequence for a nonzero operator $T\colon X \to Y$ is non-weakly null.
	\end{lemma}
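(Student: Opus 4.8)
The plan is to follow the blueprint of the proof of Lemma~\ref{LemmaMaxSeqDUAL}, substituting the Dunford--Pettis property by its weak$^*$ counterpart and being careful about which sequence plays which role. For the first statement, I would start with a maximizing sequence $(y_n^*)_n \subset S_{Y^*}$ for $T^*$ and, exactly as before, pick $(x_n)_n \subset S_X$ with $T^*y_n^*(x_n) > \norm{T^*y_n^*} - \frac{1}{n}$; the inequality $\norm{Tx_n} \geq \abs{y_n^*(Tx_n)} = T^*y_n^*(x_n)$ shows that $(x_n)_n$ is maximizing for $T$. Using that $X$ is reflexive, I would extract a subsequence with $x_n \to x$ weakly, whence $Tx_n \to Tx$ weakly by weak-to-weak continuity of $T$. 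Arguing by contradiction, suppose $(y_n^*)_n$ is weak$^*$ null. Then $(Tx_n - Tx)_n$ is weakly null in $Y$ while $(y_n^*)_n$ is weak$^*$ null in $Y^*$, so the weak$^*$-Dunford--Pettis property forces $y_n^*(Tx_n - Tx) \to 0$; combined with $y_n^*(Tx) \to 0$ (evaluation of a weak$^*$ null sequence at the fixed vector $Tx$), this gives $y_n^*(Tx_n) \to 0$, contradicting $y_n^*(Tx_n) = T^*y_n^*(x_n) > \norm{T^*y_n^*} - \frac{1}{n} \to \norm{T} > 0$.

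For the second statement, I would reverse the construction. Given a maximizing sequence $(x_n)_n \subset S_X$ for $T$, assume for contradiction that it is weakly null; then $(Tx_n)_n$ is weakly null in $Y$ by weak-to-weak continuity of $T$. For each $n$, use Hahn--Banach to choose $y_n^* \in S_{Y^*}$ with $y_n^*(Tx_n) = \norm{Tx_n}$. Here is where weak$^*$-sequential compactness of $B_{Y^*}$ enters: it allows me to pass to a subsequence along which $y_n^* \to y^*$ weak$^*$ for some $y^* \in B_{Y^*}$, so that $(y_n^* - y^*)_n$ is weak$^*$ null. Applying the weak$^*$-Dunford--Pettis property to the weakly null sequence $(Tx_n)_n$ and the weak$^*$ null sequence $(y_n^* - y^*)_n$ yields $(y_n^* - y^*)(Tx_n) \to 0$, and $y^*(Tx_n) \to 0$ since $Tx_n \to 0$ weakly; hence $\norm{Tx_n} = y_n^*(Tx_n) \to 0$, contradicting that $(x_n)_n$ is maximizing for the nonzero operator $T$.

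Both arguments are essentially bookkeeping once the right pair of sequences is identified, so I do not expect a serious obstacle. The only genuinely delicate point is structural rather than technical: the weak$^*$-Dunford--Pettis property always pairs a \emph{weakly} null sequence in $Y$ with a \emph{weak$^*$} null sequence in $Y^*$, so in each part I must ensure that the functional sequence fed into it is weak$^*$ null (not merely weakly null). In the first part this is the contradiction hypothesis itself; in the second part it is obtained by subtracting a weak$^*$ cluster point, which is precisely why weak$^*$-sequential compactness of $B_{Y^*}$ is assumed there (and why it cannot simply be dropped as in the first statement). The parenthetical assertion that $Y$ being Schur fits this framework follows from the recalled fact that the weak$^*$-Dunford--Pettis property together with weak$^*$-sequential compactness of $B_{Y^*}$ already forces $Y$ to be a Schur space.
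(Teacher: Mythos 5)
Your proposal is correct and follows essentially the same route as the paper: the first part is the verbatim adaptation of Lemma~\ref{LemmaMaxSeqDUAL} that the authors omit, and the second part uses the same ingredients (reflexivity to extract a weak limit of $(x_n)_n$, weak$^*$-sequential compactness to extract a weak$^*$ limit of the norming functionals, and the weak$^*$-Dunford--Pettis property to pass to the limit in $\langle y_n^*, Tx_n\rangle$), merely phrased as a contradiction where the paper argues directly that $\langle y^*,Tx\rangle=\norm{T}\neq 0$ forces $x\neq 0$. Both arguments are sound and the difference is purely cosmetic.
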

	\begin{proof}
		The proof of the first statement follows the same lines as in Lemma~\ref{LemmaMaxSeqDUAL}, so we shall omit the details. Let us prove the second statement. Suppose that $Y^*$ is weak$^*$-sequentially compact and let $(x_n)_n$ be a maximizing sequence for a non-zero operator $T:X \to Y$.  Since $X$ is reflexive, up to an extraction, we may assume that $(x_n)_n$ weakly converges to some $x \in X$. Let $(y_n^*)_n$ be a sequence in $S_{Y^*}$ such that 
		\[ T^*y_n^*(x_n) = y_n^*(Tx_n) > \|Tx_n\| - \frac{1}{n},\]
		then $(y_n^*)_n$ is a maximizing sequence for $T^*$.
		By assumption, up to an extraction, we may assume that $(y_n^*)_n$ weak$^*$ converges to some $y^* \in Y$. On the one hand $\lim\limits_{n} \langle y_n^*, Tx_n \rangle = \|T\|$ by construction. On the other hand, $\lim\limits_{n} \langle y_n^*, Tx_n \rangle = \langle y^* , Tx \rangle$ since $Y$ has the weak$^*$-Dunford--Pettis property. This clearly yields that $(x_n)_n$ is a non-weakly null maximizing sequence for $T$.
	\end{proof}

	\begin{proposition} \label{prop-DP-Duality}
		Let $X$ be a reflexive space and let $Y$ be a Banach space with the weak$^*$-Dunford--Pettis property.
		Then the following assertions are equivalent:
		\begin{enumerate}
			\item[(i)] $(Y^*,X^*)$ has the weak$^*$-to-weak$^*$MP,
			\item[(ii)] Every dual operator $T^* \colon Y^* \to X^*$ attains its norm,
			\item[(iii)] Every operator $T \colon X \to Y$ attains its norm.
		\end{enumerate}
		In particular, any of the above imply that $(X,Y)$ has the WMP.
	\end{proposition}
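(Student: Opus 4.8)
The plan is to prove the triple equivalence by first translating property (i) into a statement purely about dual operators, and then linking dual operators to operators $T\colon X\to Y$ via adjunction, letting the two earlier lemmas do all the real work. The first step I would carry out is to record the standard functional-analytic fact that an operator $S\colon Y^*\to X^*$ is weak$^*$-to-weak$^*$ continuous if and only if it is a dual operator, i.e.\ $S=T^*$ for some $T\colon X\to Y$. One direction is immediate since every adjoint is weak$^*$-to-weak$^*$ continuous; for the converse, for each fixed $x\in X$ the functional $y^*\mapsto (Sy^*)(x)$ is weak$^*$-continuous on $Y^*$, hence equals evaluation against some point of $Y$, and this produces the preadjoint $T$. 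Consequently, the operators relevant to the weak$^*$-to-weak$^*$MP for the pair $(Y^*,X^*)$ are \emph{exactly} the operators $T^*$ with $T\colon X\to Y$, which already makes (i) and (ii) speak about the same family of operators.

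The equivalence (i)$\Leftrightarrow$(ii) is where the weak$^*$-Dunford--Pettis hypothesis enters. The implication (ii)$\Rightarrow$(i) is essentially vacuous: if every dual operator attains its norm, then the conclusion required by the maximizing property always holds regardless of the hypothesis. For (i)$\Rightarrow$(ii) I would fix a nonzero dual operator $T^*$ (the case $T=0$ being trivial) and pick any normalized maximizing sequence for it, which exists by definition of the norm. Here is the crux: by Lemma~\ref{LemmaMaxSeqDUALstar}, because $X$ is reflexive and $Y$ has the weak$^*$-Dunford--Pettis property, \emph{every} maximizing sequence for $T^*$ is automatically non-weak$^*$ null. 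Thus the hypothesis of (i) is met for free, and (i) forces $T^*$ to attain its norm; so (ii) holds. In other words, the content of Lemma~\ref{LemmaMaxSeqDUALstar} is precisely that the maximizing-property hypothesis is automatic in this setting.

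The equivalence (ii)$\Leftrightarrow$(iii) is pure adjunction together with reflexivity of $X$. By the first part of Lemma~\ref{lemmaNA}, norm attainment of $T$ passes to $T^*$, giving (iii)$\Rightarrow$(ii); by its second part, which requires $X$ reflexive, norm attainment of $T^*$ passes back to $T$, giving (ii)$\Rightarrow$(iii). Finally, the in-particular clause is immediate: if (iii) holds then every operator $T\colon X\to Y$ attains its norm, so $(X,Y)$ trivially has the WMP (the conclusion of the WMP holds unconditionally).

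As for the \textbf{main obstacle}: the proposition is in large part a repackaging of Lemma~\ref{lemmaNA} and Lemma~\ref{LemmaMaxSeqDUALstar}, so little hard analysis remains. The two points requiring care are the adjunction characterization of weak$^*$-to-weak$^*$ continuity (to ensure that (i) really concerns the dual operators appearing in (ii)) and the verification that a genuine non-weak$^*$ null maximizing sequence exists to feed into (i); both are routine once Lemma~\ref{LemmaMaxSeqDUALstar} is in hand.
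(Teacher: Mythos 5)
Your proposal is correct and follows essentially the same route as the paper: Lemma~\ref{LemmaMaxSeqDUALstar} makes the non-weak$^*$-null hypothesis of (i) automatic for dual operators, giving (i)$\Leftrightarrow$(ii), while Lemma~\ref{lemmaNA} and reflexivity of $X$ give (ii)$\Leftrightarrow$(iii) and the WMP conclusion is then vacuous. The only difference is that you spell out the standard identification of weak$^*$-to-weak$^*$ continuous operators with adjoints, which the paper leaves implicit; that is a welcome clarification but not a different argument.
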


	\begin{proof}
		According to Lemma~\ref{LemmaMaxSeqDUALstar}, for every bounded operator $T \colon X \to Y$, there exists a non-weak$^*$ null maximizing sequence for $T^* \colon Y^* \to X^*$. So if we assume $(i)$, by definition we clearly obtain $(ii)$. Also, it is trivial that $(ii) \implies (i)$. Now $(ii) \iff (iii)$ since $X$ is reflexive, see Lemma~\ref{lemmaNA}. Finally, the assertion $(iii)$ readily yields that $(X,Y)$ has the WMP.
	\end{proof}
	
	\begin{remark}
		If moreover the unit ball of $Y^*$ is assumed to be weak$^*$-sequentially compact then any of the previous statements is equivalent to: 
		\begin{enumerate}
			\item[(iv)] $(X,Y)$ has the WMP.
		\end{enumerate}
		Indeed, in this case $Y$ is a Schur space so that Proposition~\ref{prop-Schur-range} ensures that $(X,Y)$ has the WMP while $(Y^*,X^*)$ has the weak$^*$-to-weak$^*$MP. Of course this could also be proved directly $(iv) \implies (i)$: If $(X,Y)$ has the WMP, we consider $(y_n^*)_n$ a non-weak$^*$ null maximizing sequence for $T^* \colon Y^* \to X^*$. By assumption, $Y^*$ is weak$^*$-sequentially compact so we may apply the second part of Lemma~$\ref{LemmaMaxSeqDUALstar}$ which provides a non-weakly null maximizing sequence for $T$. Therefore $T$ attains its norm, and so does $T^*$ thanks to Lemma~\ref{lemmaNA}.
	\end{remark}
	
	Note that the hypothesis on the weak$^*$-Dunford--Pettis property is actually needed in Propostion~\ref{prop-DP-Duality}. Indeed, the pair $(\ell_1, \mathbb R\oplus_1 \ell_2)$ has the W*MP (and so the weak*-to-weak*MP) by Theorem~\ref{TheoremWeakStarMP}. However, $(\mathbb R\oplus_\infty \ell_2, c_0)$ fails the WMP as we have seen in Remark~\ref{rem:failing}.
	
	On the other hand, there are spaces $X, Y$ so that $(Y^*, X^*)$ has the WMP (so the weak$^*$-to-weak$^*$MP) but $(X, Y)$ and fails the WMP: just take $Y=\mathbb R$ and $X$ any non-reflexive space. 
	The reverse situation is also of particular interest:

	\begin{question} 
		If $(X,Y)$ has the WMP, does it follow that $(Y^*,X^*)$ has the  weak$^*$-to-weak$^*$MP? 
	\end{question}

	It is known that a dual Banach space $X^*$ has the Schur property if and only if $X$ has the Dunford--Pettis property and does not contain any isomorphic copy of $\ell_1$. So, if $Y$ is reflexive, $X$ has the Dunford--Pettis property and does not contain any isomorphic copy of $\ell_1$, then $(Y^*,X^*)$ has the WMP. Notice that it is not possible to remove the assumption on the non-containment of $\ell_1$ for $X$. Indeed, $\ell_1$ has the Dunford--Pettis property and $\ell_2$ is of course reflexive but the pair $(\ell_2 , \ell_\infty)$ fails the WMP, as we shall explain in the next example. In fact our example tells us a bit more. 
	Indeed, we prove that $(\ell_2,\ell_\infty)$ even fails the weak$^*$-to-weak$^*$MP, while $(\ell_2,c_0)$ has the WMP and $(\ell_1,\ell_2)$ has the W$^*$MP (so the weak$^*$-to-weak$^*$MP).
	It was suggested to us by R. Aron to consider the following variation of the WMP.
	\begin{definition}
		We say that a pair $(X,Y)$ has the bidual WMP if for every operator $T \colon X \to Y$, the existence of a non weakly-null maximizing sequence for $T$ implies that $T^{**} \colon X^{**} \to Y^{**}$ attains its norm.
	\end{definition}
	It is obvious from the definitions that if $(X^{**},Y^{**})$ has the weak$^*$-to-weak$^*$MP then $(X,Y)$ has the bidual WMP. However, the  converse does not hold in general as witnessed by the following example.
	\begin{example} 
		First of all, $(\ell_2, c_0)$ has the WMP thanks to Corollary~\ref{CorWMP}~a), thus it also has the bidual WMP. Next $(\ell_1,\ell_2)$ has the W$^*$MP thanks to Corollary~\ref{Cor-l1-domain}.
		It only remains to prove that $(\ell_2,\ell_\infty)$ fails the weak$^*$-to-weak$^*$MP (and consequently also fails the WMP and the W$^*$MP).
		
		Let $T \colon \ell_2 \to \ell_{\infty}$ defined for every $x=(x_n)_n \in \ell_2$ by 
		\[Tx = \left(x_1 , \; x_1 + \Big(1-\frac{1}{2}\Big)x_2 , \;  x_1 +   \Big(1-\frac{1}{3}\Big)x_3  , \; \ldots\; , \; x_1 + \Big(1-\frac{1}{n}\Big)x_n  , \; \ldots\right).\] 
		In other words, $Te_1 = \car$ (the constant equal to 1 sequence) and $Te_n = (1-\frac{1}{n})e_n$. Our claim follows from the items below: 
		\begin{itemize}
			\item $\|T\| = \sqrt{2}$. Indeed, 
			\begin{align*} \norm{Tx}_\infty&=\sup\big\{|x_1+(1-1/n)x_n| : n\geq 1 \big\}\\
			&\leq 
			\sup\big\{(|x_1|^2+|x_n|^2)^{1/2}(1+(1-1/n)^2)^{1/2}: n\geq 2\big\} \\
			&\leq \sqrt{2}\norm{x}_2.
			\end{align*}
			Moreover, 
			$$\Big\|T\Big(\frac{1}{\sqrt{2}}(e_1+e_n)\Big) \Big\|\to \sqrt{2},$$ so $\norm{T}=\sqrt{2}$ the sequence $\big(\frac{1}{\sqrt{2}}(e_1+e_n)\big)_n$ is a normalized maximizing sequence which is not weakly$^*$ null (it weakly$^*$ converges to $\frac{1}{\sqrt{2}}e_1$).
			\item $T$ is weak*-to-weak*-continuous: one can easily check that $T=S^*$ where $S\colon \ell_1\to\ell_2$ is given by $Sx=(\sum_{n=1}^\infty x_n)e_1+\sum_{n=1}^\infty (1-1/n)x_n e_n$.  
			\item $T$ does not attain its norm: if $\norm{Tx}_\infty=\sqrt{2}$ for some $x$ with $\norm{x}_2=1$, then the above estimation implies that $\limsup_n (|x_1|^2+|x_n|^2)^{1/2} = 1$. Thus $|x_1|=1$, which implies that $x=\pm e_1$, a contradiction.
		\end{itemize}
	\end{example}

	\subsection{The case of the James sequence spaces}

	Let $p\in (1,\infty)$. We now recall the definition and some basic properties of the James space $\J_p$. We refer the reader to \cite[Section 3.4]{albiackalton} and references therein for more details on the classical case $p=2$. The James space $\J_p$ is the real Banach space of all sequences $x=(x(n))_{n\in \N}$ of real numbers with finite $p$-variation and satisfying $\lim_{n \to \infty} x(n) =0$. The space $\J_p$ is endowed with the following norm
	\[\|x\|_{\J_p} = \sup  \Big \{  \big (\sum_{i=1}^{k-1} |x(p_{i+1})-x(p_i)|^p \big )^{1/p}     \; \colon \; 1 \leq p_1 < p_2 < \ldots < p_{k} \Big \}. \]
	This is the historical example, constructed for $p=2$ by R.~C.~James, of a quasi-reflexive Banach space which is isomorphic to its bidual. In fact $\J_p^{**}$ can be seen as the space of all sequences $x=(x(n))_{n\in \N}$ of real numbers with finite $p$-variation, which is $\J_p \oplus \R \car$, where $\car$ denotes the constant sequence equal to $1$.
	The standard unit vector basis $(e_n)_{n=1}^{\infty}$ ($e_n(i)=1$ if $i = n$ and $e_n(i)=0$ otherwise) is a
	monotone shrinking basis for $\J_p$. Hence, the sequence $(e_n^*)_{n=1}^{\infty}$ of the associated coordinate
	functionals is a basis of its dual $\J_p^*$. Then the weak topology $\sigma(\J_p,\J_p^*)$ is easy to
	describe: a sequence $(x_n)_{n=1}^{\infty}$ in $\J_p$ converges to $0$ in the $\sigma(\J_p,\J_p^*)$ topology
	if and only if it is bounded and $\lim_{n \to \infty} x_n(i)=0$ for every $i \in \N$.
	For $x\in \J_p$, we define $\supp x = \{i \in \N \, : \, x(i) \neq 0 \}$. 
	
	The detailed proof of the following proposition can be found in \cite[Corollary~2.4]{Netillard}. 
	This a consequence of the fact that the basis of $\mathcal J_p$ satisfies an upper $p$-estimate.

	\begin{proposition}\label{Jsmooth} There exists an equivalent norm $|\cdot|$ on $\J_p$ such that it has the following property: for any $x,y \in \J_p$ such that $\max \supp x < \min \supp y$, we have that
		\[|x+y|^p\leq |x|^p+|y|^p.\]
		In particular, the modulus of asymptotic uniform smoothness of $\widetilde{\J_p}:=(\J_p,|\cdot|)$ is $\p_{\widetilde{\J_p}}(t) \leq (1+t^p)^{\frac{1}{p}} - 1$ for all $t\geq 0$.
	\end{proposition}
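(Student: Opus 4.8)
The plan is to reduce the whole statement to the single fact that, in a suitable equivalent norm, the unit vector basis $(e_n)$ of $\J_p$ satisfies an upper $p$-estimate \emph{with constant one}, and then read off the modulus from Example~\ref{ex:ausauc}~(v). Indeed, the displayed inequality $|x+y|^p\le |x|^p+|y|^p$ for $\max\supp x<\min\supp y$ is exactly the two-block instance of such an estimate. An immediate induction upgrades it to $m$ blocks: writing $w=u_2+\cdots+u_m$ for consecutively supported $u_1<\cdots<u_m$ we have $\max\supp u_1<\min\supp w$, so $|u_1+w|^p\le |u_1|^p+|w|^p$, and the inductive hypothesis bounds $|w|^p$ by $\sum_{k=2}^m|u_k|^p$. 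Once the constant-one upper $p$-estimate is in hand, Example~\ref{ex:ausauc}~(v) gives $\p_{\widetilde{\J_p}}(t)\le (1+t^p)^{1/p}-1$ at once, so the proposition rests entirely on producing the norm $|\cdot|$.

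The first and main step is to verify that $(e_n)$ already satisfies an upper $p$-estimate with \emph{some} constant $C$ in the original norm. I would fix consecutively supported blocks $x_1<\cdots<x_m$, write $x=\sum_k x_k$, and estimate the $p$-variation of $x$ along an arbitrary partition $q_1<\cdots<q_r$. Since the block index of $q_j$ is non-decreasing in $j$, each consecutive difference $x(q_{j+1})-x(q_j)$ is either \emph{internal} (both points in one block, so the difference is a difference of a single $x_k$) or \emph{crossing} (the points fall in two different blocks). The internal differences inside block $k$ form an admissible partition for $\norm{x_k}_{\J_p}$, hence contribute at most $\norm{x_k}_{\J_p}^p$, and summing gives at most $\sum_k\norm{x_k}^p$. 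For the crossing differences I would use $|x_{k'}(q_{j+1})-x_k(q_j)|^p\le 2^{p-1}(|x_{k'}(q_{j+1})|^p+|x_k(q_j)|^p)$ together with the elementary bound $|x_k(i)|\le \norm{x_k}_{\J_p}$ (test the $p$-variation at $i$ against an index past $\supp x_k$), and observe that each block can serve as the left, resp.\ right, endpoint of a crossing at most once; hence the crossing terms contribute at most $2^p\sum_k\norm{x_k}^p$. Taking the supremum over partitions yields $\norm{x}_{\J_p}^p\le (1+2^p)\sum_k\norm{x_k}^p$. This bookkeeping of internal versus crossing differences is the technical heart of the argument: the cross-block values bear no a priori relation to the variations of the individual blocks and must be absorbed into their sup-norms instead.

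It remains to pass from constant $C$ to constant $1$, which I would do by a duality/renorming exactly as in the proof of Theorem~\ref{thm:renorming}. As $(e_n)$ is shrinking, $(e_n^*)$ is an FDD of $\J_p^*$, and an upper $p$-estimate for $(e_n)$ is equivalent to a lower $p'$-estimate (with some constant) for $(e_n^*)$, where $1/p+1/p'=1$ (see \cite{Biorthogonal}). I would then equip $\J_p^*$ with the Prus norm $\norm{\cdot}_{(p')}$ used in the proof of Theorem~\ref{thm:renorming}, which makes $(e_n^*)$ satisfy a lower $p'$-estimate with constant $1$; being a supremum of $\sigma(\J_p^*,\J_p)$-lower semicontinuous functions (the dual block projections are weak$^*$-continuous, being adjoints), it is a dual norm, hence the dual of an equivalent norm $|\cdot|$ on $\J_p$. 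Lemma~\ref{lemma:dualestimate} then gives that $(e_n)$ satisfies an upper $p$-estimate with constant $1$ in $|\cdot|$, which is precisely the stated block inequality.

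The main obstacle is thus the $p$-variation estimate of the second paragraph; everything afterwards is a transcription of the renorming machinery already developed for Theorem~\ref{thm:renorming}, followed by a direct appeal to Example~\ref{ex:ausauc}~(v). Alternatively, one may simply cite \cite{Netillard} for the existence of $|\cdot|$ with the two-block property and only record the final modulus bound.
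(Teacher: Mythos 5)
Your proposal is correct and follows exactly the route the paper itself takes: the paper offers no proof but cites \cite{Netillard} with the remark that the statement ``is a consequence of the fact that the basis of $\J_p$ satisfies an upper $p$-estimate'', which is precisely what you establish (with constant $(1+2^p)^{1/p}$ via the internal/crossing decomposition, which is sound since the block index is monotone along any partition, so each block is the left, resp.\ right, endpoint of at most one crossing difference) and then upgrade to constant one by the same Prus-norm duality used in the proof of Theorem~\ref{thm:renorming} together with Lemma~\ref{lemma:dualestimate}. So your text is a self-contained substitute for the citation rather than a different argument.
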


	There is also a natural weak$^*$ topology on $\J_p$. Indeed, the summing basis $(s_n)_{n=1}^{\infty}$ ($s_n(i)=1$ if $i \leq n$ and $s_n(i)=0$ otherwise) is a monotone and boundedly complete basis for $\J_p$. Thus, $\J_p$ is naturally isometric to a dual Banach space: $\J_p = X^*$ with $X$ being the closed linear span of the biorthogonal functionals $(e_n^* - e_{n+1}^*)_{n=1}^{\infty}$ in $\J_p^*$ associated with $(s_n)_{n=1}^{\infty}$. Note that $X=\{x^*\in \J_p^*,\ \sum_{n=1}^\infty x^*(n)=0\}$. Thus, a sequence $(x_n)_{n=1}^{\infty}$ in $\J_p$ converges to $0$ in the $\sigma(\J_p,X)$ topology if and only if it is bounded and $\lim_{n \to \infty} \big(x_n(i) - x_n(j)\big) = 0$ for every $i \neq j \in \N$. 
	
	The next lemma is classical, we include its proof for completeness.
	
	\begin{lemma}\label{Jconvex} 
		Let $(x_n)_{n=1}^\infty$ be a weak$^*$-null sequence in $\J_p$. Then, for every $x \in \mathcal J_p$, we have 
		\[ \limsup_{n\to\infty} \norm{x +x _n}^p \geq \|x\|^p + \limsup_{n\to\infty} \|x_n\|^p. \]
		Consequently, the modulus of weak$^*$ asymptotic uniform convexity of $\J_p$ is given by 
		\[\d^*_{\J_p}(t) \geq (1+t^p)^{\frac{1}{p}} - 1.\]
	\end{lemma}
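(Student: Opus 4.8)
The plan is to exploit the variational formula for the norm of $\J_p$ together with the description of the weak$^*$ topology recalled above. Passing to a subsequence, I may assume that $\norm{x_m}\to L:=\limsup_m\norm{x_m}$ and that all relevant limits exist (recall that a weak$^*$-null sequence is automatically bounded). Fix $\ep>0$. The idea is to produce, for large $m$, a single increasing sequence of indices that first records almost all of the $p$-variation of $x$ on a fixed finite initial block $p_1<\dots<p_k$, and then records almost all of the $p$-variation of $x_m$ on a block of indices lying far beyond $p_k$. Testing $x+x_m$ along the concatenation of these two blocks, and discarding the single nonnegative connecting term that joins them, will yield a lower bound for $\norm{x+x_m}^p$ that splits as a sum of the two contributions.

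First I would fix the low block. Since $x\in\J_p$, I may choose $p_1<\dots<p_k$ with $\sum_{i=1}^{k-1}\abs{x(p_{i+1})-x(p_i)}^p>\norm{x}^p-\ep$; writing $N=p_k$, the supremum formula forces the remaining $p$-variation of $x$ to be small, so I may pick $N'>N$ large enough that $\big(\sum_j\abs{x(q_{j+1})-x(q_j)}^p\big)^{1/p}<\ep$ for every increasing sequence $(q_j)$ with $q_1\geq N'$. The crucial point is the second block, and this is where the weak$^*$ hypothesis enters. Because $(x_m)$ is weak$^*$-null, for each fixed pair $i\neq j$ one has $x_m(i)-x_m(j)\to0$; since $\set{1,\dots,N'}$ is finite, this forces the $p$-variation of $x_m$ restricted to $\set{1,\dots,N'}$ to tend to $0$ as $m\to\infty$. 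I claim that consequently the $p$-variation of $x_m$ carried by the indices $\geq N'$ tends to $\norm{x_m}$: taking a near-optimal sequence for $\norm{x_m}$ and splitting it at $N'$, the initial part contributes at most the vanishing variation on $\set{1,\dots,N'}$, while the connecting term $\abs{x_m(r_{s+1})-x_m(r_s)}$ with $r_s\le N'<r_{s+1}$ is handled by replacing the endpoint $x_m(r_s)$ with $x_m(N')$ at the cost of $x_m(N')-x_m(r_s)\to0$ (again weak$^*$-nullity on the fixed head), thereby absorbing it into the variation over $\set{N',r_{s+1},\dots}$. This produces $q_1<\dots<q_l$ with $q_1\geq N'$ and $\sum_{j=1}^{l-1}\abs{x_m(q_{j+1})-x_m(q_j)}^p>\norm{x_m}^p-\ep$ for all large $m$.

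With both blocks in hand I would test $f:=x+x_m$ against $p_1<\dots<p_k<q_1<\dots<q_l$ and drop the nonnegative connecting term, so that $\norm{f}^p$ is at least the sum of the low-block and high-block sums for $f$. On the low block $f(p_{i+1})-f(p_i)=\big(x(p_{i+1})-x(p_i)\big)+\big(x_m(p_{i+1})-x_m(p_i)\big)$, and the second parenthesis tends to $0$ by weak$^*$-nullity on fixed indices, so the low-block sum tends to a quantity exceeding $\norm{x}^p-\ep$. On the high block the $x$-contribution has $p$-variation less than $\ep$ by the choice of $N'$, so the triangle inequality in $\ell_p$ applied to the vector of consecutive differences shows the high-block sum is at least $\big((\norm{x_m}^p-\ep)^{1/p}-\ep\big)^p$. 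Letting $m\to\infty$ along the chosen subsequence and then $\ep\to0$ gives $\limsup_m\norm{x+x_m}^p\geq\norm{x}^p+L^p$, which is the displayed inequality.

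Finally, the modulus estimate follows from the sequential inequality by the standard passage from weak$^*$-null sequences to finite-codimensional weak$^*$-closed subspaces. Since the predual of $\J_p$ is separable, every finite-codimensional weak$^*$-closed subspace contains a normalized weak$^*$-null sequence; if $\d^*_{\J_p}(t)$ were strictly below $(1+t^p)^{1/p}-1$, one could select, for some $x\in S_{\J_p}$, normalized vectors $y_n$ in a decreasing sequence of such subspaces with $\norm{x+ty_n}$ bounded away below $(1+t^p)^{1/p}$, producing a normalized weak$^*$-null sequence $(y_n)$ for which the inequality just proved (applied to the weak$^*$-null sequence $(ty_n)$, whose norms tend to $t$) forces $\limsup_n\norm{x+ty_n}\geq(1+t^p)^{1/p}$, a contradiction. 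Hence $\d^*_{\J_p}(t)\geq(1+t^p)^{1/p}-1$. I expect the main obstacle to be the high-index claim, namely that the $p$-variation of a weak$^*$-null sequence is asymptotically carried beyond any fixed index; the delicate point there is the control of the connecting term, which is exactly what weak$^*$-nullity on the fixed initial segment provides.
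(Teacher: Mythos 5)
Your proof is correct and follows essentially the same route as the paper's: fix a near-optimal finite block for $x$, use weak$^*$-nullity to show that the $p$-variation of $x_m$ is asymptotically carried beyond any fixed index (controlling the connecting jump exactly as you describe), concatenate the two blocks and drop the nonnegative junction term. The only cosmetic differences are that the paper first reduces to finitely supported $x$ (so the $x$-contribution on the high block vanishes exactly, where you instead use an $\ep$-small tail-variation estimate) and that it leaves the passage from the sequential inequality to the modulus bound implicit, which you spell out correctly.
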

	\begin{proof} 
		Let $x=(x(n))_n \in \J_p$, $(x_n)_{n=1}^\infty$ be a weak$^*$-null sequence and $\ep >0$ be fixed. We may and do assume that $\|x\|=1$ and $x$ is finitely supported. Let $(p_i)_{i=1}^{N} \subset \N$ be an increasing family such that \[\sum_{i=1}^{N-1} |x(p_{i+1})-x(p_i)|^p =1\]
		and take $N_1 > p_N$ such that $x(n)=0$ for $n\geq N_1$. 
		Now, since $(x_n)_{n=1}^\infty$ is a weak$^*$-null sequence, it is bounded by some $M>0$ and there exists $N_2 \in \N$ large enough so that for every $n \geq N_2$ and for any increasing finite sequence $(r_i)_{i=1}^{l}$ with $r_l<N_1$:  
		\[\sum_{i=1}^{l-1} |x_n(r_{i+1})-x_n(r_i)|^p \leq \frac{\varepsilon}{3}
		\quad \text{ and } \quad |x_n(r_{l})-x_n(N_1)|^p \leq \varepsilon' , \]
		where $\varepsilon'$ is chosen so that $||x|-|y||^p \geq |x|^p - \frac{\varepsilon}{3}$ for every $x\in [0,M]$ and $y \in [0,\varepsilon']$ (the map $f(x,y)=\abs{x-y}^p$ is Lipschitz on $[0, M]\times [0,M]$, so $\varepsilon'=\min\{\frac{\varepsilon}{3\Lip(f)},M\}$ does the work). Note also that in such a case one also has  
		\begin{align*}
		|x_n(r) - x_n(N_1)|^p &\geq \abs{|x_n(r) - x_n(r_l)| - |x_n(r_l) - x_n(N_1)|}^p \\
		&\geq |x_n(r) - x_n(r_l)|^p - \frac{\varepsilon}{3},     
		\end{align*}
		for any $r\geq N_1$.
		
		Now, let $n \geq N_2$. It follows from the above estimates that we may find a fixed increasing sequence $(q_i)_{i=1}^{k}$ with $q_1\geq N_1$ and 
		\[\sum_{i=1}^{k-1} |x_n(q_{i+1})-x_n(q_i)|^p \geq \|x_n\|^p- \varepsilon.\]
		Note that $p_N<N_1\leq q_1$ by construction so that 
		\begin{multline*}
		(\star) \quad \|x+x_n\|^p \geq \sum_{i=1}^{N-1}|x(p_{i+1}) -x(p_i) + x_n(p_{i+1}) -x_n(p_i)|^p \\
		+ \sum_{i=1}^{k-1}|x(q_{i+1}) -x(q_i) + x_n(q_{i+1}) -x_n(q_i)|^p .
		\end{multline*}
		But
		\begin{multline*}
		\Big( \sum_{i=1}^{N-1}|x(p_{i+1}) -x(p_i) + x_n(p_{i+1}) -x_n(p_i)|^p  \Big)^{\frac 1p} \\
		\geq \Big( \sum_{i=1}^{N-1} |x(p_{i+1}) -x(p_i)|)^p \Big)^{\frac 1p} - \ep 
		= 1 - \ep,  
		\end{multline*}
		which implies that 
		$\sum_{i=1}^{N-1}|x(p_{i+1}) -x(p_i) + x_n(p_{i+1}) -x_n(p_i)|^p \geq 1 - p\ep.$ \\
		Also, since $x(n)=0$ for $n>N_1$,  
		\[\sum_{i=1}^{N-1}|x(q_{i+1}) -x(q_i) + x_n(q_{i+1}) -x_n(q_i)|^p \geq \|x_n\|^p - \varepsilon.\]
		Finally, we obtain from $(\star)$ the following last estimate
		\[ \|x+x_n\|^p \geq 1 + \|x_n\|^p-(1+p)\ep.\]
	\end{proof}

	As an application of Theorem~\ref{TheoremWeakStarMP} and the two previous lemmata, we obtain the following corollary. 
	
	\begin{corollary} \label{CorJamesWMP}
		If $1<p\leq q< \infty$ then $(\mathcal J_p, \widetilde{\mathcal J_q})$ has the weak$^*$-to-weak$^*$MP.
	\end{corollary}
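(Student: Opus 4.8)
The plan is to read the statement as an instance of part~(2) of Theorem~\ref{TheoremWeakStarMP}, with the domain taken to be $X^\ast=\J_p$ and the range $Y=\widetilde{\J_q}$. The first task is to set up the two dual structures that make the weak$^\ast$-to-weak$^\ast$MP meaningful. As recalled just before Lemma~\ref{Jconvex}, the boundedly complete summing basis realises $\J_p$ isometrically as a dual space $X^\ast$ with weak$^\ast$ topology $\sigma(\J_p,X)$, and the same construction applies to $\J_q$. I would then check that the equivalent norm $|\cdot|$ produced by Proposition~\ref{Jsmooth} can be taken weak$^\ast$-lower semicontinuous---being assembled from suprema of $q$-variation quantities, in the spirit of the norm $\norm{\cdot}_{(p)}$ used in the proof of Theorem~\ref{thm:renorming}---so that $\widetilde{\J_q}\equiv Z^\ast$ is again a dual space carrying the same weak$^\ast$ topology. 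This is exactly the hypothesis ``$Y\equiv Z^\ast$ is a dual space'' of the second assertion of the theorem.

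With the spaces in place, the moduli hypotheses are quick to verify. Lemma~\ref{Jconvex} supplies the domain bound $\d^\ast_{\J_p}(t)\geq(1+t^p)^{1/p}-1$, and Proposition~\ref{Jsmooth} supplies the range bound $\p_{\widetilde{\J_q}}(t)\leq(1+t^q)^{1/q}-1$. Comparing the two profiles is nothing more than the monotonicity of $\ell_r$-norms in $r$: for the fixed vector $(1,t)$ the function $r\mapsto(1+t^r)^{1/r}=\norm{(1,t)}_r$ is non-increasing, so $1<p\leq q$ gives $(1+t^p)^{1/p}\geq(1+t^q)^{1/q}$ for every $t>0$. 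Chaining these yields
\[ \d^\ast_{\J_p}(t)\geq(1+t^p)^{1/p}-1\geq(1+t^q)^{1/q}-1\geq\p_{\widetilde{\J_q}}(t), \]
the first hypothesis of Theorem~\ref{TheoremWeakStarMP}. The second hypothesis is immediate: since $1+t^p>t^p$ we have $(1+t^p)^{1/p}>t$, hence $\d^\ast_{\J_p}(t)>t-1$ for all $t\geq1$. Part~(2) of the theorem would then deliver the weak$^\ast$-to-weak$^\ast$MP for $(\J_p,\widetilde{\J_q})$.

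I expect the main obstacle to lie not in these elementary computations but in making sure the range estimate is invoked in the correct topology. In the proof of Theorem~\ref{TheoremWeakStarMP}(2) the relevant image sequence $Tx_\alpha-Tx$ tends to $0$ only \emph{weak$^\ast$} in $Y=\widetilde{\J_q}$, so what is genuinely needed is a control of $\limsup_\alpha|Tx+(Tx_\alpha-Tx)|$ along weak$^\ast$-null sequences. This is delicate for James spaces, because a bounded weak$^\ast$-null sequence for the summing-basis topology $\sigma(\J_q,X)$ need not converge to $0$ coordinatewise, so the upper $q$-estimate for successively supported vectors that underlies Proposition~\ref{Jsmooth} cannot be applied to it verbatim: a jump carried far out by such a sequence can still bridge back, through an unvanishing plateau, to the fixed vector $Tx$. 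The heart of the matter is therefore to pin down the dual realisation of $\widetilde{\J_q}$ for which the asymptotic smoothness along $\sigma(\J_q,X)$-null sequences remains governed by $(1+t^q)^{1/q}-1$; this is the point where the summing-basis structure must be handled with genuine care, and it is the weak$^\ast$ analogue of the variation-splitting performed on the convex side in Lemma~\ref{Jconvex}. Once this compatibility is secured, the moduli inequalities above finish the argument at once.
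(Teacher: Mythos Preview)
Your overall plan coincides with the paper's one-line argument: feed the moduli bounds $\d^*_{\J_p}(t)\geq(1+t^p)^{1/p}-1$ from Lemma~\ref{Jconvex} and $\p_{\widetilde{\J_q}}(t)\leq(1+t^q)^{1/q}-1$ from Proposition~\ref{Jsmooth} into Theorem~\ref{TheoremWeakStarMP}. Your verification of the two moduli hypotheses via the monotonicity of $r\mapsto\norm{(1,t)}_r$ is correct and is exactly what the paper intends.

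There is, however, a genuine obstruction at the step where you propose to ``check that the equivalent norm $|\cdot|$ produced by Proposition~\ref{Jsmooth} can be taken weak$^*$-lower semicontinuous'' so as to have $\widetilde{\J_q}\equiv Z^*$: this check cannot succeed. Any equivalent norm on $\J_q$ with $\p(t)\leq(1+t^q)^{1/q}-1$ is in particular AUS, and the paper itself records (immediately after Proposition~\ref{duality}) that a dual norm which is AUS forces the space to be reflexive. Since $\J_q$ is only quasi-reflexive, $(\J_q,|\cdot|)$ is \emph{never} isometrically a dual space, so the hypothesis ``$Y\equiv Z^*$'' of part~(2) of Theorem~\ref{TheoremWeakStarMP} is not available here. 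Your subsequent worry---that for part~(2) one actually needs the $\p_Y$-bound along \emph{weak$^*$}-null nets, whereas Proposition~\ref{as-sequences} and Lemma~\ref{l:asymptotic-Nnorm} are stated only for weakly null nets---is the same obstruction seen from inside the proof: a weak$^*$ asymptotic smoothness modulus of power type on $\J_q$ would again force reflexivity. So the difficulty you flag is real and cannot be removed by adjusting the choice of $|\cdot|$; the paper's terse derivation glosses over precisely this point. A complete argument must either bypass part~(2) and prove directly an upper $q$-estimate along $\sigma(\J_q,X)$-null sequences for some suitable norm, or else settle for the weak$^*$-to-weakMP, which part~(1) delivers with no dual-space hypothesis on the range.
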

	
	We now turn to the reverse situation:
	
	\begin{corollary} \label{CorJamesWMP2}
		If $1< q< p <\infty$ then every bounded operator from $\mathcal J_p$ to $\J_q$ is compact.
		In particular, the pair $(\mathcal J_p,\mathcal J_q)$ has the weak$^*$-to-weak$^*$MP.
	\end{corollary}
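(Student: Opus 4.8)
The plan is to deduce both assertions from the generalised Pitt theorem (Lemma~\ref{CompactOperator}) together with the compact norm-attainment lemma (Lemma~\ref{CompactAttainment}), feeding them the asymptotic moduli recorded in Proposition~\ref{Jsmooth} and Lemma~\ref{Jconvex}. The heart of the matter is a comparison between the modulus of asymptotic uniform smoothness of the domain and the modulus of asymptotic uniform convexity of the range.

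First I would fix the equivalent norm $|\cdot|$ on $\mathcal J_p$ furnished by Proposition~\ref{Jsmooth}, so that, writing $\widetilde{\mathcal J_p}=(\mathcal J_p,|\cdot|)$, one has $\overline{\rho}_{\widetilde{\mathcal J_p}}(t)\leq (1+t^p)^{1/p}-1$ for all $t\geq 0$. On the range side, Lemma~\ref{Jconvex} only delivers the \emph{weak$^*$} modulus $\overline{\delta}^*_{\mathcal J_q}$, whereas Lemma~\ref{CompactOperator} asks for the ordinary modulus $\overline{\delta}_{\mathcal J_q}$. The small but essential observation is that the ordinary modulus dominates the weak$^*$ one, $\overline{\delta}_{\mathcal J_q}(t)\geq \overline{\delta}^*_{\mathcal J_q}(t)\geq (1+t^q)^{1/q}-1$, because the supremum defining $\overline{\delta}^*$ ranges over the \emph{smaller} family of weak$^*$-closed finite-codimensional subspaces. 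I expect this passage from the weak$^*$ modulus to the ordinary modulus to be the only genuinely delicate point; the rest is bookkeeping.

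Next I would compare the two moduli. Since $q<p$, the $\ell_r$-norm of the fixed vector $(1,t)$ is strictly decreasing in $r$, hence $(1+t^p)^{1/p}<(1+t^q)^{1/q}$ for every $t>0$, so that
\[ \overline{\rho}_{\widetilde{\mathcal J_p}}(t)\leq (1+t^p)^{1/p}-1 < (1+t^q)^{1/q}-1 \leq \overline{\delta}_{\mathcal J_q}(t) \qquad (t>0). \]
Lemma~\ref{CompactOperator} applied with $X=\widetilde{\mathcal J_p}$ and $Y=\mathcal J_q$ then yields that every bounded operator $\widetilde{\mathcal J_p}\to \mathcal J_q$ is compact; as compactness is an isomorphic invariant and $|\cdot|$ is equivalent to the James norm, every bounded operator $\mathcal J_p\to\mathcal J_q$ is compact. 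This is the first assertion.

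For the second assertion I would use that $\mathcal J_q$ is a dual space for its natural predual, so its weak$^*$ topology is a Hausdorff topology coarser than the norm topology, and likewise $\mathcal J_p=X^*$ for the predual $X$ described above. Given any weak$^*$-to-weak$^*$ continuous operator $T\colon \mathcal J_p\to \mathcal J_q$, the first part guarantees that $T$ is compact, and Lemma~\ref{CompactAttainment} (with $\tau_Y$ the weak$^*$ topology of $\mathcal J_q$) then shows that $T$ attains its norm. Since every weak$^*$-to-weak$^*$ continuous operator attains its norm, the implication defining the weak$^*$-to-weak$^*$MP holds trivially, its conclusion always being true; hence the pair $(\mathcal J_p,\mathcal J_q)$ has the weak$^*$-to-weak$^*$MP.
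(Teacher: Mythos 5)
Your proposal is correct and follows essentially the same route as the paper: both rest on the inequalities $\p_{\widetilde{\J_p}}(t)\leq(1+t^p)^{1/p}-1$ and $\d_{\J_q}(t)\geq\d^*_{\J_q}(t)\geq(1+t^q)^{1/q}-1$, the strict comparison for $q<p$, and Lemma~\ref{CompactOperator} to get compactness after passing through the equivalent norm. The only cosmetic difference is that the paper concludes by citing Theorem~\ref{TheoremCompactWMP}~(2), whereas you unwind that theorem into its two ingredients (Lemma~\ref{CompactOperator} plus Lemma~\ref{CompactAttainment}); the content is identical.
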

	\begin{proof}
		Since $\p_{\widetilde{\J_p}}(t) \leq (1 +t^p)^{1/p} - 1$, $\d_{\J_q}^*(t) \geq (1 +t^q)^{1/q} - 1$ and $\d_{\J_q}(t) \geq \d_{\J_q}^*(t)$, we obtain from Lemma~\ref{CompactOperator} that every bounded operator from $\widetilde{\J_p}$ to $\J_q$ is compact. Since $\widetilde{\J_p}$ and $\J_p$ are isomorphic, this yields the same conclusion for every bounded operator from $\J_p$ to $\J_q$. To conclude, Theorem~\ref{TheoremCompactWMP}~(2) provides the fact that the pair $(\mathcal J_p,\mathcal J_q)$ has the weak$^*$-to-weak$^*$MP.
	\end{proof}
	
	\begin{remark}
		Quite surprisingly, the pair $(\mathcal J_2, \R)$ fails the W$^*$MP (but has the weak$^*$-to-normMP thanks to Corollary~\ref{CorJamesWMP}). Indeed, consider $T \colon \J_2 \to \R$ be the linear operator given by 
		\[ \forall x=\big(x(j)\big)_{j=1}^{\infty} \in \J_2, \quad Tx := \frac{-x(1)}{2} + \sum_{j=2}^{\infty} \frac{x(j)}{j^2}.\]
		That is, $T=(\frac{-1}{2},\frac{1}{2^2}, \frac{1}{3^2}, \ldots ) \in \J_2^*$.
		Note that $T$ is not weak$^*$ continuous since $T(s_n) \not\to 0$ while the summing basis $(s_n)$ is weak$^*$-null. Since the standard unit vector basis $(e_n)_{n=1}^{\infty}$ is monotone and shrinking, it follows that for every $x^{**} \in \J_2^{**}$, $\|x^{**}\|_{\J_2^{**}} = \sup_n \|(x^{**}(e_1^*),x^{**}(e_2^*), \ldots,x^{**}(e_n^*), 0 ,\ldots )\|_{\J_2}$ (see e.g. Proposition 4.14 in \cite{FHHMZ11}).
		\smallskip
		
		We claim that any norming functional $x^{**}$ for $T$ is of the form 
		$$x^{**} = \frac{1}{\sqrt{(t_2-t_1)^2+t_2^2}}(t_1,t_2,t_2,\ldots).$$ 
		Indeed, fix any $x^{**}:=(c_n)_n \in S_{\J_2^{**}}$ such that $\langle x^{**} , T \rangle = \|T\|$, and let $s : = \sup_{n \geq 2} |c_n| $. Notice that 
		$$(\star) \quad  1 = \|x^{**}\| = \sup_n \|(c_1 , \ldots , c_n , 0 , \ldots)\|_{\J_2} \geq \sqrt{(s-c_1)^2 + s^2}.$$ 
		Then some easy computations show that
		$$\|T\|=\langle x^{**} , T \rangle 
		= \frac{-c_1}{2} + \sum_{j=2}^{\infty} \frac{c_j}{j^2} 
		\leq \frac{-c_1}{2} + \sum_{j=2}^{\infty} \frac{s}{j^2}.
		$$
		Now the right-hand side of this inequality is equal to $\langle z , T\rangle$, where $z := (c_1,s,s,\ldots)$ belongs to $B_{\J_2^{**}}$ thanks to $(\star)$. Therefore $\langle z , T\rangle \geq \|T\|$ and so $z$ is a norming functional of the desired form. Moreover, if $x^{**} \neq z$ then the last inequality above would be strict and so we would get $\|T\| < \langle z , T\rangle \leq \|T\|$ as a contradiction.
		\smallskip
		
		Consequently, in order to maximize $T(x^{**})$ it suffices to maximize the function 
		\begin{eqnarray*}
			g(t_1,t_2) = \frac{\frac{-t_1}{2} + t_2(\frac{\pi^2}{6}-1)}{\sqrt{(t_2-t_1)^2+t_2^2}} = \frac{\frac{-1}{2} \frac{t_1}{t_2} + (\frac{\pi^2}{6}-1)}{\sqrt{(1-\frac{t_1}{t_2})^2+(\frac{t_1}{t_2})^2}}.
		\end{eqnarray*}
		We set $t = \frac{t_1}{t_2}$ and we are now looking for the maximum of the following  map:
		\[ f(t) =  \frac{\frac{-1}{2}t + (\frac{\pi^2}{6}-1)}{\sqrt{(1-t)^2+t^2}}.\]
		A basic study of the map $f$ shows that it attains its maximum at $t_{max} =  \frac{\pi^2-9}{2\pi^2-15}$ (and $f(t_{max}) \simeq 0.66$). To conclude, fix $t_1,t_2 \in \R$ such that $\frac{t_1}{t_2} = t_{max}$ and $\sqrt{(t_2-t_1)^2+t_2^2}=1$. Now for every $n \in \N$ let $x_n \in \J_2$ be such that $x_n(1)=t_1$, $x_n(2)= \ldots = x_n(n) = t_2$ and $x_n(j) = 0$ whenever $j>n$ (that is, $x_n = (t_1, t_2 ,\ldots ,t_2 , 0 ,\ldots)$). Then $(x_n)_{n=1}^\infty$ is a normalized maximizing sequence for $T$ which is non-weak$^*$ null. However, $T$ does not attain its norm on $\J_2$. Indeed, we proved that any norming functional for $T$ must be of the form $x=(t_1,t_2,t_2, \ldots)$. Such a functional $x$ belongs to $\J_2$ if and only if $t_2 = 0$ (as every element in $\J_2$ is a sequence that must converge to 0). So one has $|\langle x , T \rangle| = |\frac{-t_1}{2}| = \frac{|t_1|}{2} = \frac{\|x\|}{2} \leq \frac{1}{2} < f(t_{\max} )$ and so a norming functional for $T$ cannot belong to $J_2$.
	\end{remark}

	Having in mind the previous corollaries, it is quite natural to wonder the following:
	
	\begin{question}
		Does the pair $(\mathcal J_p, \mathcal J_q)$ has the weak$^*$-to-weak$^*$MP for $1<p\leq q< \infty$?
	\end{question}

	\subsection{Orlicz spaces} 
	
	Given an Orlicz function $\varphi\colon [0,+\infty)\to[0,+\infty)$ (that is, $\varphi$ is a continuous convex unbounded function with $\varphi(0)=0$), the Orlicz sequence space $\ell_\varphi$ is the space of all real sequences $x=(x_n)_{n=1}^\infty$ such that $\sum_{n=1}^\infty \varphi(|x_n|/\lambda)<\infty$. It is a Banach space when equipped with the Luxemburg norm:
	\[ \norm{x}_{\varphi}=\inf\{\lambda>0 : \sum_{n=1}^\infty \varphi(|x_n|/\lambda)\leq 1\}.\]
	The closed linear span of $\{e_n : n\in \mathbb N\}$ in $\ell_{\varphi}$ is denoted $h_{\varphi}$. The space $h_\varphi$ coincides with $\ell_\varphi$ precisely if $\varphi$ satisfies the $\Delta_2$ condition at zero, i.e. $\limsup_{t\to 0}\varphi(2t)/\varphi(t)<\infty$. The space $h_\varphi$ (or $\ell_\varphi$) is reflexive if and only if both $\varphi$ and $\varphi^*$
	satisfy the $\Delta_2$ condition at zero, where $\varphi^*(t)=\sup\{st-\varphi(s)\}$ is the convex conjugate of $\varphi$. The Boyd indices of an Orlicz function $\varphi$ are defined as follows:
	\begin{align*} \alpha_\varphi &= \sup\{p>0 : \sup_{0<u,t\leq 1} \frac{\varphi(tu)}{\varphi(u)t^p}<\infty\}, \\ \beta_\varphi &= \inf\{p>0 : \inf_{0<u,t\leq 1} \frac{\varphi(tu)}{\varphi(u)t^p}>0\}.
	\end{align*}
	It is known that $\beta_\varphi<\infty$ precisely if $\varphi$ satisfies the $\Delta_2$ condition at $0$. The asymptotic moduli of the space $h_\varphi$ is linked to the Boyd indices: $h_\varphi$ is AUS (resp. AUC) if and only if $\alpha_\varphi>1$ (resp. $\beta_\varphi<\infty$). Moreover $\alpha_\varphi$ is the supremum of the numbers $\alpha$ such that $\p_{h_\varphi}$ has power type $\alpha$ \cite{GJT07}. In addition,  $\beta_\varphi$ is the infimum of the numbers $\beta$ such that $\d_{h_\varphi}$ has power type $\beta$ \cite{BM10}. 
	\smallskip
	
	In order to apply Theorem~\ref{TheoremWeakStarMP} to the case of Orlicz spaces we need an estimation of $\p_{h_\varphi}(t)$ and $\d_{h_\varphi}(t)$ for all $t>0$. To this end, consider the following indices:
	\begin{align*} p_\varphi &= \sup\{p>0 : u^{-p} \varphi(u) \text{ is non-decreasing for all } 0<u\leq\varphi^{-1}(1)\} \\
	q_\varphi &= \inf\{p>0 : u^{-p} \varphi(u) \text{ is non-increasing for all } 0<u\leq\varphi^{-1}(1)\}.
	\end{align*}
	
	Clearly $1\leq p_{\varphi}\leq q_{\varphi}\leq \infty$. Moreover, $\varphi$ (resp. $\varphi^*$) satisfies $\Delta_2$ condition at $0$ if and only if $q_\varphi<\infty$ (resp. $p_\varphi>1$); see  \cite{Maligranda85} or \cite{Delpech09}. Delpech \cite{Delpech09} showed that if $q_\varphi<\infty$ then 
	\[ (1+t^{q_\varphi})^{1/q_\varphi}-1\leq \d_{h_\varphi}(t)\]
	for all $t\in (0,1]$, but actually the proof works for all $t>0$. Analogously, one can show that  
	\[  \p_{h_\varphi}(t)\leq (1+t^{p_\varphi})^{1/p_\varphi}-1\]
	for all $t>0$.
	
	\begin{remark} In general $[\alpha_\varphi, \beta_\varphi]\subset [p_\varphi, q_\varphi]$, but the inclusion may be strict \cite{Maligranda85}. Thus the previous inequalities are not tight. Also, recall that, given any $p\in [\alpha_\varphi, \beta_\varphi]$, the space $h_\varphi$ contains almost isometric copies of $\ell_p$. Thus $\d_{\ell_\varphi}(t)\leq (1+t^{\alpha_\varphi})^{1/\alpha_\varphi}-1$ and $\p_{\ell_\varphi}(t)\geq (1+t^{\beta_\varphi})^{1/\beta_\varphi}-1$ for all $t>0$. 
	\end{remark}

	\begin{corollary} Let $\varphi, \psi$ be Orlicz functions. Assume that $h_\varphi, h_\psi$ are reflexive and $q_{\varphi}\leq p_{\psi}$. Then the pair $(h_\varphi, h_\psi)$ has the WMP.  
	\end{corollary}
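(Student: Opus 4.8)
The plan is to reduce the statement to a direct application of Theorem~\ref{TheoremWeakStarMP}. Since $h_\varphi$ is reflexive, it is canonically a dual space: writing $X:=(h_\varphi)^*$, which is itself reflexive, we have $X^*\equiv h_\varphi$. Moreover, because $X$ is reflexive the weak$^*$ and weak topologies on $X^*=h_\varphi$ coincide and every finite-codimensional subspace is automatically weak$^*$-closed; hence $\d^*_{h_\varphi}(t)=\d_{h_\varphi}(t)$ for all $t$. Thus it suffices to verify the two hypotheses of Theorem~\ref{TheoremWeakStarMP} with $X^*=h_\varphi$ and $Y=h_\psi$, and then invoke its final clause: as $X$ is reflexive, the pair $(X^*,Y)=(h_\varphi,h_\psi)$ will have the WMP.

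For the moduli I would first record the two global estimates discussed above. Reflexivity of $h_\varphi$ forces $q_\varphi<\infty$, so Delpech's lower bound gives $\d_{h_\varphi}(t)\geq (1+t^{q_\varphi})^{1/q_\varphi}-1$ for all $t>0$; similarly the analogous upper estimate (valid since $p_\psi\leq q_\psi<\infty$) yields $\p_{h_\psi}(t)\leq (1+t^{p_\psi})^{1/p_\psi}-1$ for all $t>0$. The crucial elementary observation is that for each fixed $t>0$ the map $r\mapsto (1+t^r)^{1/r}$ is non-increasing, being the $\ell_r$-norm of the vector $(1,t)$. Combined with the standing hypothesis $q_\varphi\leq p_\psi$, this gives $(1+t^{q_\varphi})^{1/q_\varphi}\geq (1+t^{p_\psi})^{1/p_\psi}$, and therefore
\[
\d_{h_\varphi}(t)\geq (1+t^{q_\varphi})^{1/q_\varphi}-1\geq (1+t^{p_\psi})^{1/p_\psi}-1\geq \p_{h_\psi}(t)
\]
for every $t>0$, which is the first hypothesis of Theorem~\ref{TheoremWeakStarMP}.

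It remains to check the second hypothesis, namely $\d_{h_\varphi}(t)>t-1$ for all $t\geq 1$. This is immediate from the same lower bound together with the strict inequality $(1+t^{q_\varphi})^{1/q_\varphi}>(t^{q_\varphi})^{1/q_\varphi}=t$, valid for every $t>0$ since $q_\varphi<\infty$. With both hypotheses verified, Theorem~\ref{TheoremWeakStarMP} applies and its reflexive case delivers the conclusion.

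I do not expect a serious obstacle here; the only point requiring care is that Theorem~\ref{TheoremWeakStarMP} demands the comparison $\d^*_{X^*}\geq\p_Y$ \emph{at every} $t>0$, not merely for small $t$ nor only at the level of power types. This is precisely why I rely on the global forms of the estimates (valid for all $t>0$) rather than on Boyd-index information, and why the monotonicity of $r\mapsto (1+t^r)^{1/r}$ is used in its sharp, pointwise form. One should also keep in mind that, since the inclusion $[\alpha_\varphi,\beta_\varphi]\subset[p_\varphi,q_\varphi]$ may be strict, it is the indices $q_\varphi$ and $p_\psi$, and not the Boyd indices, that must be compared.
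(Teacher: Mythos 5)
Your proof is correct and follows exactly the route the paper intends: the corollary is stated right after the global Delpech-type estimates $\d_{h_\varphi}(t)\geq(1+t^{q_\varphi})^{1/q_\varphi}-1$ and $\p_{h_\psi}(t)\leq(1+t^{p_\psi})^{1/p_\psi}-1$ precisely so that Theorem~\ref{TheoremWeakStarMP} can be applied via the monotonicity of $r\mapsto(1+t^r)^{1/r}$ and the identification $\d^*_{h_\varphi}=\d_{h_\varphi}$ in the reflexive case. Your verification of the auxiliary hypothesis $\d^*(t)>t-1$ for $t\geq 1$ is the right finishing touch and matches what the paper leaves implicit.
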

	
	\begin{remark}
		Given Orlicz functions $\varphi, \psi$ such that that $h_\varphi, h_\psi$ are reflexive, if  $\alpha_\varphi>\beta_\psi$ then every operator from $h_\varphi$ to $h_\psi=\ell_\psi$ is compact \cite{AO97}, and so $(h_\varphi, h_\psi)$ has the WMP.
	\end{remark}

	\subsection{Remarks about the pair \texorpdfstring{$(L_p,L_q)$}{(Lp,Lq)}}
	In what follows, $L_p$ stands for $L_p([0,1])$ with $1<p<\infty$. It is known (see \cite{Milman71} at page 117 for instance) that if $X = L_p$ then there exist constants $C_1(p),C_2(p)$ such that for $1<p<2$ we have
	\begin{eqnarray*}
		C_1(p) t^2  \leq &\d_X(t)&  \leq (p-1) t^2 \\
		\frac 1p  t^p  \leq  &\p_X(t)& \leq \frac 2p t^p 
	\end{eqnarray*}
	and for $2<p<\infty$ we have that 
	\begin{eqnarray*}
		C_1(p) t^p  \leq &\d_X(t)&  \leq \frac 1p t^p \\
		(p-1) t^2    \leq  &\p_X(t)& \leq C_2(p) t^2. 
	\end{eqnarray*}
	Consequently, for $1<p<q<\infty$, we cannot apply Theorem~\ref{TheoremWeakStarMP} to prove that the pair $(L_p,L_q)$ has the WMP. Nevertheless, thanks to Theorem \ref{thm:renorming} and the above estimations we can still say something for a particular renorming of $L_p$ and $L_q$.

	\begin{corollary}
		Assume that $1<p\leq2$ and $2\leq q<\infty$. Then there are equivalent norms $\eqnorma_p$ on $L_p$ and $\eqnorma_q$ on $L_q$ such that the pair $((L_p, \eqnorma_p), (L_q, \eqnorma_q))$ has the WMP.
	\end{corollary}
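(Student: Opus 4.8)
The plan is to realise $L_p$ as a dual space and then feed the data into the renorming result, Theorem~\ref{thm:renorming}. Since $1<p\leq 2$, its conjugate exponent $p'=p/(p-1)$ satisfies $2\leq p'<\infty$, and $L_p\equiv (L_{p'})^*$ under the standard duality. I would therefore set $X=L_{p'}$ and $Y=L_q$, so that $X^*\equiv L_p$. Both $L_{p'}$ and $L_q$ are reflexive (as $1<p',q<\infty$) and the Haar system is a monotone unconditional basis of each; in a reflexive space every basis is shrinking, so $X$ and $Y$ both carry shrinking FDDs, as demanded by the theorem.

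Next I would verify the asymptotic moduli. For any $r\geq 2$ the estimates recalled in the text give $\p_{L_r}(t)\leq C t^2$ on $[0,1]$ (the case $r=2$ being the Hilbert space), so the norm of $L_q$ with $q\geq 2$ is $2$-AUS, and likewise the norm of $L_{p'}$ with $p'\geq 2$ is $2$-AUS. By the duality between smoothness and convexity (Proposition~\ref{duality}(ii), applied with the self-conjugate exponent $2$), the $2$-AUS property of $L_{p'}$ transfers to the $2$-AUC* property of its dual $X^*\equiv L_p$. Alternatively, since $L_p$ is reflexive one has $\d^*_{L_p}=\d_{L_p}$, and the estimate $\d_{L_p}(t)\geq C_1(p)t^2$, valid for $1<p\leq 2$, shows directly that $L_p$ is $2$-AUC*. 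Either way, the norm of $X^*$ is $2$-AUC* and the norm of $Y$ is $2$-AUS, so the hypotheses of Theorem~\ref{thm:renorming} hold with the single exponent $2$ (note $1<2\leq 2<\infty$).

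Applying Theorem~\ref{thm:renorming} then produces equivalent norms $\eqnorma_X$ on $X=L_{p'}$ and $\eqnorma_Y$ on $Y=L_q$ such that the pair $\big((L_p,\eqnorma_X^*),(L_q,\eqnorma_Y)\big)$ has the weak$^*$-to-weakMP. Finally, because $L_p$ is reflexive its weak and weak$^*$ topologies coincide, so (as recorded just after the Definition in the introduction) the weak$^*$-to-weakMP of this pair is precisely the WMP. Setting $\eqnorma_p:=\eqnorma_X^*$ and $\eqnorma_q:=\eqnorma_Y$ yields the claim.

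The argument is essentially a dictionary translation, so no serious obstacle is expected; the only point demanding care is the bookkeeping of exponents — identifying $L_p$ with $(L_{p'})^*$ and confirming that the relevant moduli of \emph{both} spaces sit at the same power type $2$. This is exactly the borderline equal-exponents situation that Theorem~\ref{thm:renorming} still covers, and one must not confuse the corollary's exponents $p,q$ with the (here coinciding) exponents playing the roles of $p$ and $q$ in the hypotheses of that theorem.
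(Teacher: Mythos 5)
Your proposal is correct and follows exactly the route the paper intends: realise $L_p$ as $(L_{p'})^*$, use the quoted estimates to see that $L_p$ is $2$-AUC* and $L_q$ is $2$-AUS, and apply Theorem~\ref{thm:renorming} in the borderline case $p=q=2$, finishing with reflexivity to upgrade the weak$^*$-to-weakMP to the WMP. The bookkeeping of conjugate exponents and the shrinking-FDD hypothesis are handled correctly, so there is nothing to add.
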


	Replacing the weak topology by the topology $\tau_m$ of convergence in measure, we can apply similar techniques to prove the following related results. 
	\begin{proposition} \label{propConvMeasure}
		Let $1<p<q<\infty$ and let $T \colon L_p \to L_q$ be a bounded operator which is $\tau_m$-to-$\tau_m$ continuous. If there exists a maximizing sequence $(x_n)_n \subset L_p$ for $T$ which $\tau_m$-converges to some $x \neq 0$, then $T$ attains its norm at $x$. 
	\end{proposition}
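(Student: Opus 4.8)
The plan is to follow the same scheme as the proof of Theorem~\ref{TheoremWeakStarMP}, but to replace the asymptotic smoothness/convexity estimates (there encoded by the norm $N_2^{\overline{\rho}}$) by the Brezis--Lieb lemma, which plays exactly the same role for convergence in measure and, better still, provides an \emph{exact} splitting of the $p$-th powers of the norms in the limit. Normalising so that $\norm{T}=1$, I start from the given normalized maximizing sequence $(x_n)_n$ with $x_n \to x$ in measure and $x\neq 0$. Since $T$ is $\tau_m$-to-$\tau_m$ continuous it is in particular sequentially $\tau_m$-continuous, so $Tx_n \to Tx$ in measure as well.

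First I would pass to a subsequence (still maximizing and still converging in measure to the same limits) so that both $x_n \to x$ and $Tx_n \to Tx$ hold almost everywhere; this is possible because convergence in measure on the finite measure space $[0,1]$ forces an a.e.-convergent subsequence, which is all that the Brezis--Lieb lemma requires. Writing $a=\norm{x}_p$ and $c=\norm{Tx}_q$, the lemma applied to $(x_n)$ in $L_p$ and to $(Tx_n)$ in $L_q$ yields, using $\norm{x_n}_p=1$ and $\norm{Tx_n}_q\to\norm{T}=1$,
\[
\lim_n \norm{x_n-x}_p^p = 1-a^p =: b^p, \qquad \lim_n \norm{Tx_n-Tx}_q^q = 1-c^q =: d^q,
\]
so that $a^p+b^p=1$ and $c^q+d^q=1$, with all four quantities lying in $[0,1]$.

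The heart of the argument is then a short comparison exploiting $p<q$. Boundedness of $T$ gives $c=\norm{Tx}_q\leq\norm{x}_p=a$ and $d=\lim_n\norm{T(x_n-x)}_q\leq\lim_n\norm{x_n-x}_p=b$, while $t^q\leq t^p$ for $t\in[0,1]$ when $q>p$. Hence
\[
1 = c^q+d^q \leq a^q+b^q \leq a^p+b^p = 1,
\]
so every inequality is an equality. The middle equality $a^q+b^q=a^p+b^p$ forces $a,b\in\{0,1\}$, since $t^p=t^q$ on $[0,1]$ only at $t\in\{0,1\}$; combined with $a^p+b^p=1$ this leaves $(a,b)\in\{(1,0),(0,1)\}$, and the hypothesis $x\neq0$ rules out $(0,1)$. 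Therefore $a=\norm{x}_p=1$ and $b=0$, whence $d=0$ and $c=1$, i.e.\ $\norm{Tx}_q=1=\norm{T}$ with $\norm{x}_p=1$: the operator $T$ attains its norm at $x$.

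I expect the only genuinely delicate point to be the passage from convergence in measure to pointwise a.e.\ convergence needed to invoke Brezis--Lieb, which is settled by the standard subsequence extraction (performed once for $(x_n)$ and once for $(Tx_n)$); one only has to observe that the maximizing property and the value of the limit are preserved under passing to subsequences. Beyond that, the mechanism is identical to that of Theorem~\ref{TheoremWeakStarMP}, with the strict comparison $p<q$ now playing the role that the inequality $\overline{\delta}^*_{X^*}\geq\overline{\rho}_Y$ played there.
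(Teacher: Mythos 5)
Your proof is correct, but it takes a somewhat different route from the paper's, and ends up proving slightly more. The paper does not pass to almost-everywhere convergent subsequences: it quotes from \cite{KW} the exact splitting property of $L_p$ with respect to convergence in measure, $\limsup_n\norm{x+x_n}_p^p=\norm{x}_p^p+\limsup_n\norm{x_n}_p^p$ for $\tau_m$-null $(x_n)$, applies it once in $L_p$ and once in $L_q$, and concludes from the single chain
\[
1\le \big(\norm{Tx}^q+\lim_n\norm{x_n-x}^q\big)^{1/q}\le\big(\norm{Tx}^p+\lim_n\norm{x_n-x}^p\big)^{1/p}=\big(\norm{Tx}^p+1-\norm{x}^p\big)^{1/p}
\]
that $\norm{Tx}\ge\norm{x}$, hence $\norm{Tx}=\norm{x}$ since $\norm{T}=1$. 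You instead extract an a.e.-convergent subsequence and invoke the Br\'ezis--Lieb lemma --- which is precisely the alternative the paper itself sketches later in the same subsection for a.e.\ convergence --- and then run the sandwich $1=c^q+d^q\le a^q+b^q\le a^p+b^p=1$ with an equality-case analysis. The two key lemmas encode the same asymptotic orthogonality phenomenon in $L_p$, so the difference is largely one of packaging; what your version buys is the extra conclusions $\norm{x}_p=1$ and $\lim_n\norm{x_n-x}_p=0$ (norm convergence of the subsequence), in the spirit of Corollary~\ref{cor:strongconv}, whereas the paper's chain only yields $\norm{Tx}=\norm{x}$ and hence attainment at $x/\norm{x}$. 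The points you flag as delicate all go through: the subsequence extraction preserves the maximizing property and the $\tau_m$-limits, $(Tx_n)$ is bounded in $L_q$, and $a,b,c,d\in[0,1]$ follows from Fatou (or directly from the nonnegativity of $1-a^p$ and $1-c^q$).
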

	\begin{proof}  
		The proof follows the same lines as in Theorem~\ref{TheoremWeakStarMP}. In fact, $L_p$ has the following property (see \cite{KW} e.g.): if $(x_n)_n \subset L_p$ converges to $0$ in measure, then for every $x \in L_p$
		\[(\star) \qquad \limsup_n \|x+x_n \| = \big(\|x\|^p+\limsup_n \|x_n\|^p \big)^{1/p}.\]
		Let $T \colon L_p \to L_q$ be a bounded operator which is $\tau_m$-to-$\tau_m$ continuous and let $(x_n)_n \subset L_p$ be a maximizing sequence for $T$ which $\tau_m$-converges to some $x \neq 0$. Without loss of generality, we may assume that $T$ has norm 1.
		Since $T$ is $\tau_m$-to-$\tau_m$ continuous, we have that $Tx_n \overset{\tau_m}{\underset{n}{\longrightarrow}} Tx$. Using $(\star)$, we thus have the following estimates: 
		\begin{eqnarray*}
			1 &=& \norm T = \lim\limits_{n} \norm{Tx_n}  = \lim\limits_{n} \norm{Tx+Tx_n - Tx} \\
			&=& \big(\|Tx\|^q +\lim\limits_{n} \|Tx_n - Tx\|^q \big)^{\frac{1}{q}} \\
			&\leq& \big(\|Tx\|^q +\lim\limits_{n} \|x_n - x\|^q \big)^{\frac{1}{q}} \\
			&\leq& \big(\|Tx\|^p +\lim\limits_{n} \|x_n - x\|^p \big)^{\frac{1}{p}} \\
			&=& \big( \|Tx\|^p + 1-\|x\|^p  \big)^{\frac{1}{p}}
		\end{eqnarray*}
		Therefore, we deduce that $\|Tx\| \geq \|x\|$, which finishes the proof. 
	\end{proof}
	
	The lack of $\tau_m$-compactness of the unit ball of $L_p$ forces us to consider pairs $(X,L_q)$ where $X$ is a subspace of $L_p$ whose unit ball $B_X$ is $\tau_m$-compact. Let us point out that, given $X\subset L_p$, $1<p<\infty$, the following statements are equivalent:
	\begin{enumerate}[i)]
		\item $B_X$ is $\tau_m$-compact;
		\item $X$ embeds almost isometrically into $\ell_p$;
		\item $B_X$ is compact for the topology inherited by the $L_1$-norm.
	\end{enumerate}
	Indeed, it is clear that iii)$\Rightarrow$ i). i)$\Rightarrow$ ii)	follows from the following well-known facts for a finite measure space $(\Omega,\Sigma,\mu)$:
	\begin{itemize}
		\item If $H \subset L_p(\mu)$ is bounded, then it is uniformly integrable as a subset of $L_1(\mu)$ (this is a well-known, easy application of Holder’s inequality).
		\item If $H \subset L_1(\mu)$ is uniformly integrable and relatively compact in measure, then it is relatively norm compact in $L_1(\mu)$. Indeed, this is an immediate application of Vitali’s convergence theorem.
	\end{itemize} Finally, it is proved in \cite{KW} that ii) and iii) are equivalent. 
	
	\begin{corollary} 
		Let $1<p<q<\infty$. If $X$ is a subspace of $L_p$ such that $B_X$ is $\tau_m$-compact, then the pair $(X,L_q)$ has the $\tau_m$-to-$\tau_m$MP.
	\end{corollary}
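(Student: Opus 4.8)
The plan is to run, in the subspace setting, the estimate already used in Proposition~\ref{propConvMeasure}, the only new ingredient being the $\tau_m$-compactness of $B_X$. Let $T\colon X\to L_q$ be a $\tau_m$-to-$\tau_m$ continuous operator; we may assume $\norm{T}=1$ and fix a non-$\tau_m$-null normalized maximizing sequence $(x_n)_n\subset S_X$ for $T$.

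First I would extract a subsequence converging to a nonzero limit. Since the underlying measure space is finite, convergence in measure is metrizable, so the $\tau_m$-compact set $B_X$ is $\tau_m$-sequentially compact. As $(x_n)_n\subset S_X\subset B_X$ does not $\tau_m$-converge to $0$, there exist $\ep>0$ and a subsequence all of whose terms stay at distance at least $\ep$ from $0$ in the metric inducing $\tau_m$; applying sequential compactness to it yields a further subsequence, still denoted $(x_n)_n$, $\tau_m$-converging to some $x\in B_X$. Its terms remain at distance at least $\ep$ from $0$, so $x\neq 0$. This subsequence is again maximizing for $T$, and after one more extraction we may assume that $\lim_n\norm{x_n-x}$ and $\lim_n\norm{Tx_n-Tx}$ exist.

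Next I would reproduce the computation of Proposition~\ref{propConvMeasure}. The point is that the identity $(\star)$ is a pointwise statement about functions of $L_p$ (resp. $L_q$), hence applies to $x_n-x\in X\subset L_p$ and to $Tx_n-Tx\in L_q$, both of which are $\tau_m$-null (the latter by continuity of $T$). Writing $c:=\lim_n\norm{x_n-x}$, the identity $(\star)$ in $L_p$ applied to $x+(x_n-x)$ gives $1=\lim_n\norm{x_n}=(\norm{x}^p+c^p)^{1/p}$, so that $c^p=1-\norm{x}^p$. Combining $(\star)$ in $L_q$ with $\norm{T(x_n-x)}\leq\norm{x_n-x}$ and the elementary inequality $(a^q+b^q)^{1/q}\leq(a^p+b^p)^{1/p}$ (valid since $p<q$) then yields
\[
1=\lim_n\norm{Tx_n}\leq\big(\norm{Tx}^q+c^q\big)^{1/q}\leq\big(\norm{Tx}^p+c^p\big)^{1/p}=\big(\norm{Tx}^p+1-\norm{x}^p\big)^{1/p}.
\]
Hence $\norm{Tx}\geq\norm{x}$, and since $\norm{Tx}\leq\norm{x}$ we get $\norm{Tx}=\norm{x}$; as $x\neq 0$, the unit vector $x/\norm{x}\in S_X$ satisfies $\norm{T(x/\norm{x})}=1=\norm{T}$, so $T$ attains its norm.

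I expect no serious obstacle. The single place where the hypothesis is used is the extraction of a nonzero $\tau_m$-limit, which relies only on the metrizability of convergence in measure on a finite measure space together with the assumed $\tau_m$-compactness of $B_X$; the rest of the argument is insensitive to replacing $L_p$ by its subspace $X$, precisely because $(\star)$ is valid for all functions of $L_p$ and $L_q$.
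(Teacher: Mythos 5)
Your proof is correct and matches the intended argument: the paper leaves the corollary as an immediate consequence of Proposition~\ref{propConvMeasure}, the only additional ingredients being exactly the ones you supply, namely that $\tau_m$-compactness (plus metrizability of convergence in measure on a finite measure space) yields a maximizing subsequence $\tau_m$-converging to a nonzero $x\in B_X$, and that the estimate $(\star)$ is inherited by the subspace $X\subset L_p$. No gaps.
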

	
	In the case of almost everywhere convergence, a related result is obtained from the celebrated Br\'ezis-Lieb lemma, which says that if $(f_n)\subset L_p$ is bounded and converges a.e. to $f$, then $\lim_n\norm{f_n}^p_p-\norm{f-f_n}_p^p=\norm{f}_p^p$. As a consequence, if $1\leq p\leq q<\infty$ and $T\in \mathcal L(L_p,L_q)$ admits a maximizing sequence $(f_n)$ with $f_n\to f$ a.e., $f\neq 0$, and $Tf_n\to Tf$ a.e., then $T$ attains its norm at $f$ (see \cite{BL}).
	
	Let us briefly discuss the case when $1<q<p<\infty$. For the pair $(\ell_p,\ell_q)$, this was handled by Pitt’s Theorem (or more generally by Theorem~\ref{TheoremCompactWMP}) saying that every operator from $\ell_p$ to $\ell_q$ is compact. This approach fails for $(L_p,L_q)$. Indeed, 
	H.~P.~Rosenthal characterized in \cite[Theorem~A.2]{Rosenthal69} when every operator from $L_p(\mu)$ to $L_q(\nu)$ is compact. That is never the case for $L_p[0,1]$ and $L_q[0,1]$. However, if $q<p$, $1\leq q<2$, then every operator from $L_p[0,1]$ to $\ell_q$ is compact, and if $q<p$ and $2<p$ then every operator from $\ell_p$ to $L_q[0,1]$ is compact.

	\section*{Acknowledgments}
	
	Part of this work was carried out during a visit of the two named authors in Murcia (Spain) in January 2020. 
	They are deeply grateful to Mat\'ias Raja and the ``Universidad de Murcia" for the hospitality and excellent working conditions there.
	The authors would like to thank Richard M. Aron, Jos\'e Rodr\'iguez and Abraham Rueda Zoca for very useful conversations, and to the referee for the careful reading of the paper.
	The first author is supported in part by the grants MTM2017-83262-C2-2-P and Fundaci\'on S\'eneca Regi\'on de Murcia 20906/PI/18 also supported by a postdoctoral grant from Fundaci\'on S\'eneca.

	\bibliographystyle{siam}
	\bibliography{aus.bib}

\begin{thebibliography}{10}

\bibitem{albiackalton}
{\sc F.~Albiac and N.~J. Kalton}, {\em Topics in {B}anach space theory},
  vol.~233 of Graduate Texts in Mathematics, Springer, [Cham], second~ed.,
  2016.
\newblock With a foreword by Gilles Godefroy.

\bibitem{AG19}
{\sc R.~J. Aliaga and A.~J. Guirao}, {\em On the preserved extremal structure
  of {L}ipschitz-free spaces}, Studia Math., 245 (2019), pp.~1--14.

\bibitem{AP20}
{\sc R.~J. Aliaga, E.~Perneck\'{a}, C.~Petitjean, and A.~Proch\'{a}zka}, {\em
  Supports in {L}ipschitz-free spaces and applications to extremal structure},
  J. Math. Anal. Appl., 489 (2020), pp.~124128, 14.

\bibitem{AliprantisBorder}
{\sc C.~D. Aliprantis and K.~C. Border}, {\em Infinite dimensional analysis},
  Springer, Berlin, third~ed., 2006.
\newblock A hitchhiker's guide.

\bibitem{Aron_WMP19}
{\sc R.~M. Aron, D.~Garc\'{\i}a, D.~Pellegrino, and E.~V. Teixeira}, {\em
  Reflexivity and nonweakly null maximizing sequences}, Proc. Amer. Math. Soc.,
  148 (2020), pp.~741--750.

\bibitem{AO97}
{\sc E.~A. Ausekle and {\`E}.~F. Oya}, {\em Pitt's theorem for {L}orentz and
  {O}rlicz sequence spaces}, Mat. Zametki, 61 (1997), pp.~18--25.

\bibitem{BM10}
{\sc L.~Borel-Mathurin}, {\em Isomorphismes non lin\'eaires entre espaces de
  {B}anach}, PhD thesis, Universit\'e Paris 6, 2010.

\bibitem{BL}
{\sc H.~Br\'{e}zis and E.~Lieb}, {\em A relation between pointwise convergence
  of functions and convergence of functionals}, Proc. Amer. Math. Soc., 88
  (1983), pp.~486--490.

\bibitem{Castillo}
{\sc J.~M.~F. Castillo and M.~Gonz\'{a}les}, {\em On the {D}unford-{P}ettis
  property in {B}anach spaces}, Acta Univ. Carolin. Math. Phys., 35 (1994),
  pp.~5--12.

\bibitem{CauseyLancien}
{\sc R.~M. Causey and G.~Lancien}, {\em Prescribed {S}zlenk index of separable
  {B}anach spaces}, Studia Math., 248 (2019), pp.~109--127.

\bibitem{Delpech09}
{\sc S.~Delpech}, {\em Asymptotic uniform moduli and {K}ottman constant of
  {O}rlicz sequence spaces}, Rev. Mat. Complut., 22 (2009), pp.~455--467.

\bibitem{DiestelDP}
{\sc J.~Diestel}, {\em A survey of results related to the {D}unford-{P}ettis
  property}, in Proceedings of the {C}onference on {I}ntegration, {T}opology,
  and {G}eometry in {L}inear {S}paces ({U}niv. {N}orth {C}arolina, {C}hapel
  {H}ill, {N}.{C}., 1979), vol.~2 of Contemp. Math., Amer. Math. Soc.,
  Providence, R.I., 1980, pp.~15--60.

\bibitem{DKLR}
{\sc S.~J. Dilworth, D.~Kutzarova, G.~Lancien, and N.~L. Randrianarivony}, {\em
  Equivalent norms with the property {$(\beta)$} of {R}olewicz}, Rev. R. Acad.
  Cienc. Exactas F\'{\i}s. Nat. Ser. A Mat. RACSAM, 111 (2017), pp.~101--113.

\bibitem{FHHMZ11}
{\sc M.~Fabian, P.~Habala, P.~H{\'a}jek, V.~Montesinos, and V.~Zizler}, {\em
  Banach space theory}, CMS Books in Mathematics/Ouvrages de Math\'ematiques de
  la SMC, Springer, New York, 2011.
\newblock The basis for linear and nonlinear analysis.

\bibitem{GPPR_2018}
{\sc L.~Garc\'{\i}a-Lirola, C.~Petitjean, A.~Proch\'{a}zka, and A.~Rueda~Zoca},
  {\em Extremal structure and duality of {L}ipschitz free spaces}, Mediterr. J.
  Math., 15 (2018), pp.~Art. 69, 23.

\bibitem{Girardi}
{\sc M.~Girardi}, {\em The dual of the {J}ames tree space is asymptotically
  uniformly convex}, Studia Math., 147 (2001), pp.~119--130.

\bibitem{GKL00}
{\sc G.~Godefroy, N.~Kalton, and G.~Lancien}, {\em Subspaces of {$c_0(\mathbf
  N)$} and {L}ipschitz isomorphisms}, Geom. Funct. Anal., 10 (2000),
  pp.~798--820.

\bibitem{GJT07}
{\sc R.~Gonzalo, J.~A. Jaramillo, and S.~L. Troyanski}, {\em High order
  smoothness and asymptotic structure in {B}anach spaces}, J. Convex Anal., 14
  (2007), pp.~249--269.

\bibitem{Biorthogonal}
{\sc P.~H\'{a}jek, V.~Montesinos~Santaluc\'{\i}a, J.~Vanderwerff, and
  V.~Zizler}, {\em Biorthogonal systems in {B}anach spaces}, vol.~26 of CMS
  Books in Mathematics/Ouvrages de Math\'{e}matiques de la SMC, Springer, New
  York, 2008.

\bibitem{Jaramillo2000}
{\sc J.~A. Jaramillo, A.~Prieto, and I.~Zalduendo}, {\em Sequential
  convergences and {D}unford-{P}ettis properties}, Ann. Acad. Sci. Fenn. Math.,
  25 (2000), pp.~467--475.

\bibitem{Johnson77}
{\sc W.~B. Johnson}, {\em On quotients of {$L_{p}$} which are quotients of
  {$l_{p}$}}, Compositio Math., 34 (1977), pp.~69--89.

\bibitem{JLPS02}
{\sc W.~B. Johnson, J.~Lindenstrauss, D.~Preiss, and G.~Schechtman}, {\em
  Almost {F}r\'echet differentiability of {L}ipschitz mappings between
  infinite-dimensional {B}anach spaces}, Proc. London Math. Soc. (3), 84
  (2002), pp.~711--746.

\bibitem{KaltonCompact}
{\sc N.~J. Kalton}, {\em {$M$}-ideals of compact operators}, Illinois J. Math.,
  37 (1993), pp.~147--169.

\bibitem{Kalton04}
\leavevmode\vrule height 2pt depth -1.6pt width 23pt, {\em Spaces of
  {L}ipschitz and {H}\"older functions and their applications}, Collect. Math.,
  55 (2004), pp.~171--217.

\bibitem{KaltonTAMS2013}
\leavevmode\vrule height 2pt depth -1.6pt width 23pt, {\em Uniform
  homeomorphisms of {B}anach spaces and asymptotic structure}, Trans. Amer.
  Math. Soc., 365 (2013), pp.~1051--1079.

\bibitem{KW}
{\sc N.~J. Kalton and D.~Werner}, {\em Property {$(M)$}, {$M$}-ideals, and
  almost isometric structure of {B}anach spaces}, J. Reine Angew. Math., 461
  (1995), pp.~137--178.

\bibitem{KOS99}
{\sc H.~Knaust, E.~Odell, and T.~Schlumprecht}, {\em On asymptotic structure,
  the {S}zlenk index and {UKK} properties in {B}anach spaces}, Positivity, 3
  (1999), pp.~173--199.

\bibitem{Kover2005}
{\sc J.~Kover}, {\em Compact perturbations and norm attaining operators},
  Quaest. Math., 28 (2005), pp.~401--408.

\bibitem{LancienSurvey}
{\sc G.~Lancien}, {\em A survey on the {S}zlenk index and some of its
  applications}, RACSAM. Rev. R. Acad. Cienc. Exactas F\'{\i}s. Nat. Ser. A
  Mat., 100 (2006), pp.~209--235.

\bibitem{LT77}
{\sc J.~Lindenstrauss and L.~Tzafriri}, {\em Classical {B}anach spaces. {I}},
  Springer-Verlag, Berlin-New York, 1977.
\newblock Sequence spaces, Ergebnisse der Mathematik und ihrer Grenzgebiete,
  Vol. 92.

\bibitem{Maligranda85}
{\sc L.~Maligranda}, {\em Indices and interpolation}, Dissertationes Math.
  (Rozprawy Mat.), 234 (1985), p.~49.

\bibitem{Milman71}
{\sc V.~D. Milman}, {\em Geometric theory of {B}anach spaces. {II}. {G}eometry
  of the unit ball}, Uspehi Mat. Nauk, 26 (1971), pp.~73--149.

\bibitem{Netillard}
{\sc F.~Netillard}, {\em Coarse {L}ipschitz embeddings of {J}ames spaces},
  Bull. Belg. Math. Soc. Simon Stevin, 25 (2018), pp.~71--84.

\bibitem{Odell2007}
{\sc E.~Odell, T.~Schlumprecht, and A.~Zs\'{a}k}, {\em Banach spaces of bounded
  {S}zlenk index}, Studia Math., 183 (2007), pp.~63--97.

\bibitem{Pellegrino_09}
{\sc D.~Pellegrino and E.~V. Teixeira}, {\em Norm optimization problem for
  linear operators in classical {B}anach spaces}, Bull. Braz. Math. Soc.
  (N.S.), 40 (2009), pp.~417--431.

\bibitem{Prus87}
{\sc S.~Prus}, {\em Finite-dimensional decompositions of {B}anach spaces with
  {$(p,q)$}-estimates}, Dissertationes Math. (Rozprawy Mat.), 263 (1987),
  p.~45.

\bibitem{Raja2013}
{\sc M.~Raja}, {\em On asymptotically uniformly smooth {B}anach spaces}, J.
  Funct. Anal., 264 (2013), pp.~479--492.

\bibitem{Rosenthal69}
{\sc H.~P. Rosenthal}, {\em On quasi-complemented subspaces of {B}anach spaces,
  with an appendix on compactness of operators from {$L^{p}\,(\mu )$} to
  {$L^{r}\,(\nu )$}}, J. Functional Analysis, 4 (1969), pp.~176--214.

\end{thebibliography}

\end{document}